\newtheorem{thm}{Theorem}[section]
\newtheorem{prop}[thm]{Proposition}
\newtheorem{lem}[thm]{Lemma}
\newtheorem{theor}[thm]{Theorem}
\newtheorem{cor}[thm]{Corolary}
\newcommand{\expect}{\mathbb{E}}
\newcommand{\R}{\mathbb{R}} 
\newcommand{\MG}{\mathcal{MG}} 
\newcommand{\Gi}{\mathcal{G}}  
\newcommand{\Me}{\mathcal{M}}  
\newcommand{\bp}{\bar{\pi}} 
\newcommand{\pmax}{\pi^{+}} 
\newcommand{\pmin}{\pi^{-}} 
\newcommand{\bpmax}{\bar{\pi}^{+}} 
\newcommand{\bpmin}{\bar{\pi}^{-}} 
\newcommand{\pr}{\mathrm{pr}}
\newcommand{\Hom}{\mathrm{Hom}}
\newcommand{\mz}{mod $0$ }
\newcommand{\N}{\mathbb{N}}
\newcommand{\sA}{\mathscr{A}} 
\newcommand{\sB}{\mathscr{B}} 
\newcommand{\sI}{\mathscr{I}} 
\newcommand{\Sym}{\mathrm{Sym}}
\title{Random ordering formula for sofic and Rokhlin entropy of Gibbs measures}
\author{Andrei Alpeev\footnote{Chebyshev Laboratory, St. Petersburg State University, 14th Line V.O., 29B, Saint Petersburg 199178 Russia,  \href{mailto:a.alpeev@spbu.ru}{a.alpeev@spbu.ru}}   }
\begin{document}

\maketitle
\begin{abstract}
We prove the explicit formula for sofic and Rokhlin entropy of actions arising from some class of Gibbs measures. It provides a new set of examples with sofic entropy independent of sofic approximations. 
It is particularilly interresting, since in non-amenable case Rokhlin entropy was computed only in case of Bernoulli actions and for some examples with zero Rokhlin entropy. As an example we show that our formula holds for the supercritical Ising model. We also establish a criterion for uniqueness of Gibbs measure by means of $f$-invariant.
\end{abstract}

\section{Introduction}

In the paper \cite{B10a} Lewis Bowen defined a new invariant, so-called {\em sofic entropy} for measure preserving action of {\em sofic group}, which led to a great progress in the question of Bernoulli shifts isomorphism. The class of sofic groups is huge, containing all the amenable groups and all the residually finite groups. Currently it is not known whether all the groups are sofic or not.
This invariant depends on so-called {\em sofic approximation}, a sofic group can have numerous of them. 
In some cases sofic entropy was computed: for Bernoulli shifts (of finite base space entropy) it was done in the very paper \cite{B10a}. In the paper \cite{B11} Bowen have computed sofic entropy for a class of algebraic actions of residually finite groups with respect to the sofic approximations coming from their residually finite approximations. This result was later generalized by Hayes \cite{H14} to a class of algebraic action over all the sofic groups. In these results the very interresting phenomenon occurs: sofic entropy does not depend on the sofic approximation. Although, in the work \cite{C15} Carderi showed an example of the action having nonnegative sofic entropy for one sofic approximation and $-\infty$ for another (the latter actually corresponds to some kind of degeneration in the defintion). It is an interesting problem to clarify, whether action can have two nonnegative values of sofic entropy. 
In my work \cite{A15} I have proved that for the class of actions coming from Gibbs measures sofic entropy does not depend on the sofic approximation. 

Another entropy-type invariant the {\em Rokhlin entropy} was defined and investigated in works \cite{S14a}, \cite{S14b}, \cite{S16} by Seward, and in upcoming work \cite{AS17} by Seward and myself.
This entropy is defined as the infimum of Shannon entropies of all the generating partitions relative to the subalgebra of invariant subsets.
The name praises famous theorem by Rokhlin stating that Kolmogorov-Sinai entropy of aperiodic ergodic actions equals to the infimum of Shannon entropies of generating partitions. Rokhlin entropy seems to be extremely hard to compute for actions of nonamenable groups (for free actions of amenable groups it equals to the usual entropy). It is known that Rokhlin entropy provides the upper bound for sofic entropy (see \cite{B10a} and \cite{Ke13}, and \cite{AS17} for non-ergodic case). In fact, this provides essentially the only way to get a lower bound for the Rokhlin entropy. 

In the preprint \cite{S16} Seward gave a nontrivial upper bound for the Rokhlin entropy. In order to state it let us introduce some notation. Let $G$ be a countable group with fixed p.m.p. action on the standard probability space $X$. Let $(\xi_g)_{g \in G}$ be an i.i.d. random process such that each $\xi_g$ is uniformly distributed on the unit interval. Let $L_{\xi}$ denote the set of all such $g \in G$ that $\xi_g < \xi_e$ ($e$ stands for the group identity). For the partition $\alpha$ let us denote $\alpha^{L_{\xi}}$ the minimal subalgebra, with respect to which all the partitions $\alpha^g$ for $g \in L_{\xi}$ are measurable, where $\alpha^g$ denotes the $g$-shifted partition $\alpha$ (see section \ref{preliminaries} for the precise definitions). 
\begin{theor}[Seward, \cite{S16}]\label{Seward inequality}
Let $\alpha$ be a generating partition for a p.m.p. action of the countable group $G$. Then the Rokhlin entropy of this action is bounded from above by the quantity $\expect_{\xi}H(\alpha \vert \alpha^{L_{\xi}})$.
\end{theor}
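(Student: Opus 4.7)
The strategy I would adopt is the standard one of passing to a Bernoulli extension that realizes $\xi$ explicitly, constructing a small generating partition there, and then descending back to $X$. Concretely, set $Y=X\times[0,1]^G$ with the product measure $\nu=\mu\times\lambda^G$ and the diagonal shift action, and let $\mathcal{B}$ denote the sub-$\sigma$-algebra of $Y$ generated by the $\xi$-coordinates. The goal is to produce a partition $\beta$ of $Y$ such that (i) $\beta$ together with $\mathcal{B}$ generates the full $\sigma$-algebra of $Y$ modulo null sets, and (ii) $H(\beta\vert\mathcal{B})\le\expect_{\xi}H(\alpha\vert\alpha^{L_{\xi}})$; combined with a general comparison of Rokhlin entropies across Bernoulli extensions, this will yield the desired bound for $X$.

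For the construction of $\beta$, I would use the conditional distribution of $\alpha(x)$ given $\alpha^{L_{\xi}}(x)$ and $\xi$, which exists for $\nu$-a.e.\ $(x,\xi)$ by measurable disintegration. Rather than the naive rank encoding (which overshoots the target because of concavity of Shannon entropy), I would use a randomized Shannon-style encoding that exploits the continuous coordinate $\xi_e$ as a source of random bits, so that the expected code length \emph{conditional on} $\mathcal{B}$ is exactly $\expect_{\xi}H(\alpha\vert\alpha^{L_{\xi}})$. Taking $\beta$ to be the resulting code symbol, viewed as a countable-valued measurable function of $(x,\xi)$, then gives property~(ii).

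The generating property (i) is more subtle, because the definition of $\beta(x,\xi)$ already involves the past $\alpha^{L_{\xi}}(x)$, which is itself a function of the unknowns $\alpha(gx)$ for $g\in L_{\xi}$. Since $\xi$ almost surely fails to induce a well-order on $G$, one cannot invert the coding by any straightforward transfinite induction. My approach would therefore be distributional: conditionally on $\mathcal{B}$, the law of $(\alpha^g)_{g\in G}$ is uniquely determined by the coded sequence $(\beta^g)_{g\in G}$, so that $H(\alpha\vert\sigma(\beta)\vee\mathcal{B})=0$ and hence $\alpha$ is measurable with respect to $\sigma(\beta)\vee\mathcal{B}$.

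The main obstacle, and where I expect the technical heart of the argument to lie, is the descent from $Y$ back to $X$: one must show that $h_{\mathrm{Rok}}(X)$ is bounded above by the relative-to-$\mathcal{B}$ analogue on $Y$. Heuristically this holds because $\mathcal{B}$ is independent of $X$ and carries only free randomness, but to make it rigorous one likely has to replace $[0,1]^G$ by a finite-alphabet Bernoulli process of small marginal entropy, show that any generator of $Y$ relative to such a finite Bernoulli factor can be converted into a generator of $X$ at arbitrarily small extra Shannon-entropy cost, and then pass to a limit. This Bernoulli absorption step is what I would spend the most effort on, and it is what bridges the auxiliary random ordering $\xi$ with the intrinsic invariant $h_{\mathrm{Rok}}(X)$.
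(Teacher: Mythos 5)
This theorem is imported by the paper from \cite{S16} without proof, so I am comparing your sketch against Seward's argument rather than against anything in the text. Your overall architecture does match his: pass to $X\times[0,1]^G$, build a partition $\beta$ that generates jointly with the Bernoulli coordinate algebra $\mathcal{B}$ and has small entropy relative to $\mathcal{B}$, then descend using the identification of $h_{Rok}(X)$ with the Rokhlin entropy of the product relative to the Bernoulli factor. That last ingredient is a genuinely hard theorem from Seward's earlier work (note that Rokhlin entropy is not even known to be monotone under factor maps, so $h_{Rok}(X)\le h_{Rok}(X\times Z^G)$ is not the routine ``absorption'' your heuristic suggests), but you correctly flag it as a black box to be invoked, which is acceptable here.

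The genuine gap is in your treatment of the generating property. You correctly observe that $\xi$ does not well-order $G$, so the code cannot be inverted by induction along $L_\xi$; but the ``distributional'' workaround you propose does not close the hole. Knowing that the conditional \emph{law} of $(\alpha^g)_{g\in G}$ given $\mathcal{B}$ is a function of $(\beta^g)_{g\in G}$ is far weaker than $H(\alpha\mid\sigma(\beta)\vee\mathcal{B})=0$: a determined conditional law gives no pointwise recovery (for instance, if $\beta$ were conditionally independent of $\alpha$ given $\mathcal{B}$, the conditional law would still be ``determined by'' $\beta$ as a constant function of it). The correct mechanism, and the one Seward uses, is to discretize the coordinate $\xi_e$ into levels: fix $n$, let $P_k=\lbrace(x,\xi):\xi_e\in[k/n,(k+1)/n)\rbrace$, and on $P_k$ encode $\alpha$ relative only to $\lbrace\alpha^g:\xi_g<k/n\rbrace\vee\mathcal{B}$. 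Every $g$ with $\xi_g<k/n$ moves the point into some $P_j$ with $j<k$, so the order becomes well-founded with finitely many levels and $\alpha$ is recovered by induction on $k$; meanwhile the conditional entropy of the resulting $\beta$ given $\mathcal{B}$ is an upper Riemann sum for $\int_0^1 H(\alpha\mid\alpha^{\lbrace g:\xi_g<t\rbrace})\,dt=\expect_{\xi}H(\alpha\mid\alpha^{L_{\xi}})$, which decreases to the integral as $n\to\infty$ by monotonicity in $t$. Without this (or an equivalent) device your $\beta$ is not shown to be generating, and the exact Shannon-optimal randomized code you describe is also unnecessary --- being within $\varepsilon$ at each level suffices once the limit in $n$ is taken.
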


In the work \cite{GS15}, Gaboriau and Seward provided an explicit upper and lower bounds for sofic and Rokhlin entropy of a class of algebraic actions coming from the celebrated Ornstein-Weiss construction \cite{OW87}.

Gibbs measures constitute a very important type of stochastic processes.

Let us define invariant Gibbs measures over groups. Let $G$ be a countable group and $A$ be a finite set of size at least 2. We can define shift-action of $G$ on $A^G$ by the formula $(gx)(h) = x(hg)$ for $x \in A^G$ and $g,h \in G$. A potential is any function $\varphi: A^G \to \R$ which depends only on the restriction of $x \in A^G$ to the finite subset $D \Subset G$. For $\Lambda$ a finite subset of $G$ we will set $\bp_{\Lambda}$ to be a probability kernel $\Me(A^G) \to \Me(A^G)$ such that for any $x \in A^G$ we will have that $\bp_{\Lambda}(\delta_x)$ is supported on the set of such $y \in A^G$ with $\pr_{G \setminus \Lambda}(x) = \pr_{G \setminus \Lambda}(y)$ ($\pr_{\Lambda}$ stands for the natural projection $A^G \to A^{\Lambda}$). We also require that $\bp_{\Lambda}(\delta_x)(\lbrace y \rbrace)$ for $y$ in the support is proportional to 
\[
e^{-\sum_{Dg \cap \Lambda \neq \varnothing} \varphi(gy)}.
\] 
Borel probability measure $\nu$ on $A^G$ is said to be {\em Gibbs measure} for the potential $\varphi$ if for any finite subset $\Lambda$ of $G$ we have that $\bp_{\Lambda}(\nu)=\nu$.

In this work I will first show the explicit lower bound for the sofic entropy of shift-actions on Gibbs measures satifying so-called Dobrushin condition. In fact, by the very similar argument one can prove directly the upper bound also and show that sofic entropy is the same for all the sofic approximations and can be computed by means of explicit formula. Instead we use the upper bound \ref{Seward inequality} by Seward for Rokhlin entropy, which happens to coincide with the lower bound. 
The third invariant, so-called {\em $f$-invariant}, was introduced by Bowen in the work \cite{B10a}. We will give the definition later. In the enquiry of sofic entropy for Gibbs measures we will hugely rely on the behaviour of Gibbs measures over sofic approximation. This is closely related to the study of Gibbs measures over sparse graphs, see \cite{DM10a}, \cite{DM10b}, \cite{AuP17}. 

\begin{theor}\label{main eq}
Suppose $\nu$ is a Gibbs measure for the potential $\varphi$, and $\alpha$ is a canonical alphabet generating partition. Suppose that one of the following holds:
\begin{enumerate}
\item Gibbs structure correspondent to the potential $\varphi$ satisfies the Dobrushin uniqueness condition;
\item Gibbs structure correspondent to the potential is attractive and posesses unique Gibbs measure.
\end{enumerate}
Then the sofic entropy of the correspondent shift-action is the same for all the sofic approximations. It also equals to the Rokhlin entropy and its value is given by the formula $\expect_{\xi}H(\alpha \vert \alpha^{L_{\xi}})$. If group $G$ is free then the latter also equals to the $f$-invariant.
\end{theor}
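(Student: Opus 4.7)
The plan is to close a sandwich. For any sofic approximation $\Sigma$ of $G$ one has the unconditional chain
\[
h_\Sigma(\nu) \;\le\; h_{\mathrm{Rok}}(\nu) \;\le\; \expect_\xi H\bigl(\alpha \mid \alpha^{L_\xi}\bigr),
\]
where the first inequality is the general comparison of sofic with Rokhlin entropy and the second is Theorem \ref{Seward inequality} applied to the canonical generating partition $\alpha$. Consequently the whole theorem reduces to proving the matching lower bound $h_\Sigma(\nu) \ge \expect_\xi H(\alpha \mid \alpha^{L_\xi})$ for every sofic approximation $\Sigma$. Once this is in hand, the sofic entropy is automatically independent of $\Sigma$ and agrees both with the Rokhlin entropy and with the random ordering formula.

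For the lower bound, given a sofic approximation $\sigma_n : G \to \Sym(V_n)$ I would manufacture enough microstates $\omega \in A^{V_n}$ by a randomized sweep. Draw an i.i.d.\ uniform ordering $(\xi_v)_{v \in V_n}$ of the vertex set and colour the vertices one by one in decreasing order of $\xi_v$; at each step sample the colour at $v$ from the Gibbs conditional distribution determined by $\varphi$, using the already-revealed colours on vertices in the support of the potential translated via $\sigma_n$ to $v$. At a ``good'' vertex $v$ whose local neighbourhood in $V_n$ is isomorphic to a ball in the Cayley graph of $G$, this conditional entropy is essentially $H(\alpha \mid \alpha^{L_\xi})$. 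Since asymptotically a $(1-o(1))$-fraction of the vertices are good, summing conditional entropies along the sweep yields approximately $\exp\bigl(|V_n|\,\expect_\xi H(\alpha \mid \alpha^{L_\xi})\bigr)$ distinct configurations, which supplies the required lower bound.

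The main obstacle is to check that the microstates produced by this procedure truly approximate $\nu$ in the weak topology used in the definition of sofic entropy, i.e.\ that the empirical local statistics of a typical sample match the marginals of $\nu$. Under the Dobrushin uniqueness condition this rests on the exponential decay of boundary influence built into the condition, so the local Gibbs sampler equilibrates on each neighbourhood; under attractiveness with a unique Gibbs measure it follows from bracketing the sampler between the monotone versions launched from the minimal and maximal configurations, both of which converge to the same limit by uniqueness. The quantitative tools here are those developed in \cite{A15} and in the sparse-graph Gibbs analyses \cite{DM10a,DM10b,AuP17}.

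For the final $f$-invariant statement, with $G = F_r$ free on generators $s_1,\dots,s_r$, I would evaluate $\expect_\xi H(\alpha \mid \alpha^{L_\xi})$ on the Cayley tree directly. Because the Cayley graph is a tree, the only neighbours of the identity that contribute to $\alpha^{L_\xi}$, up to tail effects that vanish by the Markov property of the shift, are those $s_i^{\pm 1}$ with $\xi_{s_i^{\pm 1}} < \xi_e$. A short calculation of the probability that $\xi_e$ is the minimum on each edge of the tree rearranges the expectation into
\[
(1-2r)H(\alpha) + \sum_{i=1}^{r} H\bigl(\alpha \vee \alpha^{s_i}\bigr),
\]
which is exactly the $f$-invariant. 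Combined with the sandwich above this finishes the proof.
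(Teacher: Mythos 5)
Your upper-bound half is exactly the paper's: $h(\nu)\le h_{\mathrm{Rok}}(\nu)\le \expect_\xi H(\alpha\mid\alpha^{L_\xi})$ via Seward's theorem (you should note that Seward's bound needs essential freeness, which the paper proves separately for Gibbs measures). The lower bound, however, is where your proposal diverges and where there are genuine gaps. First, your microstate construction is not sound as stated: a single randomized sweep that samples each vertex from the Gibbs conditional given only the already-revealed colours does \emph{not} produce the finite-volume Gibbs measure on $A^{V_n}$, and there is no reason the empirical local statistics of such a sample converge to $\nu$ --- this is the difference between one pass of a Gibbs sampler and its equilibrium. The paper avoids this entirely by invoking the result of \cite{A15} that $h(\nu)=\limsup_i H(\eta_i)/|V_i|$ where $\eta_i$ is the \emph{actual} unique Gibbs measure on $V_i$, and then applying the chain rule for $H(\eta_i)$ in a uniformly random order of $V_i$; the chain rule is an exact identity, so one only has to lower-bound each conditional entropy term using the Markov property and the local weak$^*$ convergence of $\theta_v(\eta_i)$ to $\nu$ (which itself uses uniqueness of the Gibbs measure). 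Second, even if your local computation went through, it would naturally yield $\expect_\xi H(\alpha\mid\alpha^{L_\xi\cup(G\setminus F)})$ for finite windows $F$, whose limit is $\expect_\xi H(\alpha\mid\widetilde{\alpha^{L_\xi}})$ with the \emph{saturated} algebra, not $\alpha^{L_\xi}$. Closing that gap is precisely where the Dobrushin and attractiveness hypotheses are used (the paper's Proposition \ref{saturation lemma}, which needs uniqueness of the Gibbs measure for every conditioned structure $\Gi_{S,\omega}$); your sketch never isolates this step, and "equilibration of the local sampler" does not substitute for it.

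The $f$-invariant part is also off. The expression you derive, $(1-2r)H(\alpha)+\sum_i H(\alpha\vee\alpha^{s_i})$, is only the $r=0$ term of the defining infimum of the $f$-invariant, and the claimed reduction of $\alpha^{L_\xi}$ to nearest-neighbour conditioning "by the Markov property" does not hold: $L_\xi$ is a random half of the whole group, a general Gibbs measure for $\varphi$ need not be a nearest-neighbour Markov chain on the Cayley tree, and conditioning on $L_\xi\cap B_r$ is not conditioning on a full boundary. The paper's route is much shorter: by Hayes's theorem (Proposition \ref{f-invariant and sofic}) the $f$-invariant of an essentially free action of a free group is sandwiched between the infimum and supremum of its sofic entropies over all sofic approximations, so once all sofic entropies are shown to coincide with $\expect_\xi H(\alpha\mid\alpha^{L_\xi})$, the $f$-invariant equals that value with no further computation.
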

The proof will be given below.

The first condition could be satisfied by any finitely-determined function $\varphi$ after multiplication by a small enough coefficient. Example for the second one is Ising model posessing unique Gibbs measure. 

The proof will mainly rely on Seward's bound and two additional statements. To formulate them we will need the notion of {\em saturation}. Consider a $G$ shift-action on $A^G$ ($A$ is a finite set) with some invariant Borel probability measure. Let $\alpha$ be a canonical alphabet generating partition. If $\sA$ is a subalgebra, then we define its saturation $\widetilde{\sA}$ to be $\bigcap_{F} {\alpha^{G \setminus F} \vee \sA}$(the intersection is taken along all the finite subsets $F$ of $G$).

\begin{theor}\label{main ineq}
Suppose $\mu$ is a unique Gibbs measure for a potential $\varphi$, and $\alpha$ is a canonical alphabet generating partition. Then $h(\mu)\geq \mathbb{E}_{\xi}H(\alpha \vert \widetilde{\alpha^{L_{\xi}}} )$.
\end{theor}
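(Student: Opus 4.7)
The plan is to prove the lower bound on sofic entropy using the microstate definition: for any sofic approximation $\Sigma = (\sigma_n \colon G \to \Sym(V_n))$, I want to exhibit many labelings $x \colon V_n \to A$ whose empirical pullback distribution is close to $\mu$. Rather than enumerate microstates by hand, I will build a \emph{random} labeling $X_n \colon V_n \to A$, show its Shannon entropy is approximately $|V_n| \cdot \mathbb{E}_\xi H(\alpha \mid \widetilde{\alpha^{L_\xi}})$, and show it concentrates on good microstates; the count of good microstates is then at least $\exp(H(X_n) - o(|V_n|))$, which yields the desired lower bound for sofic entropy.

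The construction is a sequential Gibbs sampling on $V_n$ driven by a random ordering. Draw $\xi \in [0,1]^{V_n}$ i.i.d.\ uniform, and process vertices in increasing $\xi$-order. When vertex $v$ is reached, sample $X_n(v)$ from the single-site Gibbs specification $\bar{\pi}_{\{e\}}$ pulled back via $\sigma_n$, using the already-assigned values at the $D$-neighbours of $v$ (those $\sigma_n(g)^{-1}(v)$ for $g$ in the support window that lie in the $\xi$-past of $v$) and treating the rest by a harmless convention (e.g., averaging using the Gibbs kernel over a coupled fresh sample from $\mu$). The conditional entropy at the step for $v$ equals, by construction, the Gibbs conditional entropy given the revealed past labels near $v$; as $n \to \infty$, because $\sigma_n$ converges locally to the Cayley graph, and because the "already-assigned near-past" of $v$ at scale $r$ converges to the portion of $L_\xi$ inside $B_r(e)$, while the far-away assigned values converge to tail information, the sum of conditional entropies (averaged over $\xi$ and normalised by $|V_n|$) tends to $\mathbb{E}_\xi H(\alpha \mid \widetilde{\alpha^{L_\xi}})$ — precisely the saturation, because passing to the $r \to \infty$ limit adds the tail $\sigma$-algebra to $\alpha^{L_\xi}$.

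For the concentration on good microstates, I would use the uniqueness hypothesis on $\mu$. The law of $X_n$, by construction, is an approximate single-site Gibbs measure on the graph induced by $\sigma_n$; any weak$^*$ limit of the empirical orbit distributions of $X_n$ is shift-invariant and satisfies the Gibbs equation for $\varphi$ (modulo vanishing error controlled by the soficity defect of $\sigma_n$ and standard Gibbs-measure-on-sparse-graphs results in the spirit of \cite{DM10a, DM10b, AuP17}), so by uniqueness it must equal $\mu$. Therefore $X_n$ is asymptotically supported on good microstates, and the entropy lower bound transfers to a lower bound on their cardinality, giving $h_\Sigma(\mu) \geq \mathbb{E}_\xi H(\alpha \mid \widetilde{\alpha^{L_\xi}})$. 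The main obstacle is the concentration step together with pinning down the exact role of the saturation in the entropy computation: the sequential sampler on $V_n$ uses only finite-range past data, and one must argue via a martingale/filtration argument that the limiting conditional entropy matches the \emph{saturated} algebra $\widetilde{\alpha^{L_\xi}}$ rather than just $\alpha^{L_\xi}$, while simultaneously controlling the propagation of the sofic-approximation defect through the sequential sampling so that the weak limit of the empirical distributions is truly a Gibbs measure for $\varphi$.
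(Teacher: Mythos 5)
There is a genuine gap, and it sits exactly where you flag ``the main obstacle.'' Your central object is a single sequential sweep of single-site Gibbs updates along a random order, with unassigned neighbours handled by ``a harmless convention.'' The law of that random labeling is \emph{not} the Gibbs measure on the finite graph induced by $\sigma_n$, and there is no reason its empirical orbit distribution should converge to $\mu$: one pass of single-site updates does not equilibrate (think of a low-temperature ferromagnet, where the sweep's output retains a strong imprint of the initial convention), and the chain-rule identity $H(X_n)=\sum_v H\bigl(X_n(v)\mid \text{past}\bigr)$ only reproduces a Gibbs conditional entropy if the conditional law of $X_n(v)$ given the \emph{entire} past equals the Gibbs kernel --- which for your sampler it does not, since you condition only on assigned neighbours and invent the rest. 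So both halves of your argument (the entropy count and the concentration on good microstates) rest on an unproved and, as stated, false identification. The paper avoids this entirely by working with the \emph{exact} finite-volume Gibbs measures $\eta_i$ on $V_i$ and invoking the prior result (from \cite{A15}, using uniqueness of the Gibbs measure for $\Gi$) that $h(\nu)=\limsup_i H(\eta_i)/\lvert V_i\rvert$; the random ordering is used only to expand $H(\eta_i)$ by the chain rule, not to construct the measure.

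The second missing ingredient is the mechanism producing the saturation. In the paper one fixes a finite window $F\Subset G$ and \emph{lowers} each chain-rule term by conditioning additionally on all of $V_i\setminus\sigma_i^{F}(v)$; the Markov property then localizes the conditioning to $(L_{v,\chi}\cap\sigma_i^F(v))\cup\partial_{\Gi^i}\sigma_i^F(v)$, local weak convergence $\theta_v(\eta_i)\to\nu$ (again via uniqueness, Lemma \ref{approximation of gibbs measure}) transfers the bound to the group, and a second application of the Markov property yields $h(\nu)\geq \expect_{\xi}H(\alpha\mid\alpha^{L_\xi\cup(G\setminus F)})$ for every finite $F$. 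The saturation $\widetilde{\alpha^{L_\xi}}$ then appears as the decreasing limit of the algebras $\alpha^{L_\xi\cup(G\setminus F)}$ as $F\uparrow G$, via martingale convergence for decreasing $\sigma$-algebras and dominated convergence in $\xi$. Your sketch gestures at ``far-away assigned values converge to tail information,'' but for a fixed $v$ the $\xi$-past contains only about half of the far-away vertices, not all of them, so you do not actually see the full tail; the deliberate over-conditioning on the \emph{whole} complement of a finite window (which is harmless because you only want a lower bound, and which is what makes the Markov property applicable) is the step your argument lacks. I would recommend abandoning the sequential sampler and re-deriving the bound from the finite-volume Gibbs measures themselves.
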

It will be proved in the section \ref{gibbs over sofic} using the random ordering argument from \cite{BCKL13}. We note also that this type of argument was used to compute mean entropy for processes on graphs Benjamini-Schramm convergent to uniform trees in \cite{AuP17}.

In the following proposition we will refer to a slightly more general definition of Gibbs structure (see subsection \ref{gibbs measures general}) and. Definition of Dobrushin condition could be found in the susbection \ref{sec:dobrushin} and that of attractive Gibbs structure in \ref{sec:attractive}.
\begin{prop}\label{saturation lemma}
Suppose $\Gi$ is a Gibbs structure having unique Gibbs measure $\nu$.
Suppose that either of these condition holds:
\begin{itemize}
\item $\Gi$ satisfies Dobrushin condition,
\item $\Gi$ is attractive.
\end{itemize}
Then for any $S \subset G$ we have that $\widetilde{\alpha^{S}}$ is equivalent to $\alpha^S$ $\nu$-\mz.
\end{prop}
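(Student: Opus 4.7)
The inclusion $\alpha^S \subseteq \widetilde{\alpha^S}$ is immediate from the definition, so the content of the proposition is the reverse inclusion modulo $\nu$-null sets. Setting $T := G \setminus S$, one first observes that only finite subsets $F \subset T$ contribute to the defining intersection, so $\widetilde{\alpha^S} = \bigcap_{F} (\alpha^S \vee \alpha^{T \setminus F})$ with $F$ ranging over finite subsets of $T$. My plan is to disintegrate $\nu$ along $\pr_S$ as $\nu = \int \nu_y \, d\nu_S(y)$, where $\nu_S := (\pr_S)_* \nu$ and $\nu_y$ is identified with a Borel probability measure on $A^T$. For $A \in \widetilde{\alpha^S}$, a direct unpacking of the definition shows that the slice $A_y := \{z \in A^T : (y,z) \in A\}$ lies in $\sigma(\text{coords in } T \setminus F)$ for every finite $F \subset T$, hence in the tail $\sigma$-algebra $\mathcal{T}_T$ of $A^T$. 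Once I know that $\mathcal{T}_T$ is $\nu_y$-trivial for $\nu_S$-a.e.\ $y$, it follows that $\nu_y(A_y)\in\{0,1\}$ almost surely, so $A$ agrees $\nu$-a.s.\ with the genuinely $\alpha^S$-measurable set $\pr_S^{-1}\{y : \nu_y(A_y) = 1\}$, which is exactly the desired reverse inclusion.

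Identifying $\nu_y$ is the next step. Conditioning the identities $\bp_\Lambda(\nu) = \nu$ for finite $\Lambda \subset T$ on the event $\pr_S = y$ produces a specification $\Gi_y$ on $A^T$ whose local kernels $\bar\pi^{(y)}_\Lambda$ are obtained from $\bp_\Lambda$ by freezing the $S$-boundary to equal $y$; a routine verification then shows that $\nu_y$ satisfies DLR for $\Gi_y$ for $\nu_S$-a.e.\ $y$, so $\nu_y \in \Gi_y$. I will then invoke the classical fact that extremal Gibbs measures have trivial tail, and argue in each of the two cases that $\nu_y$ is the \emph{unique} (hence extremal) Gibbs measure of $\Gi_y$ for $\nu_S$-a.e.\ $y$.

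In the Dobrushin case this is immediate: the Dobrushin coefficients $C^{(y)}_{g,h}$ of $\Gi_y$ are dominated by the originals $C_{g,h}$, since taking a supremum over fewer configurations can only decrease them; so Dobrushin's condition is inherited by $\Gi_y$ and uniqueness follows. In the attractive case $\Gi_y$ is again attractive, and the main obstacle is to propagate uniqueness from $\Gi$ to $\Gi_y$. My plan is to construct the extremal plus and minus measures $\nu^+_y, \nu^-_y$ of $\Gi_y$ as monotone limits of $\bar\pi^{(y)}_{\Lambda_n}$ applied to the constant maximal and minimal configurations along some exhaustion $\Lambda_n \uparrow T$, construct the analogous $\nu^\pm$ for $\Gi$, and then identify the disintegrations $(\nu^\pm)_y$ with $\nu^\pm_y$ via a monotone coupling together with the invariance $\bp_{\Lambda_n}(\nu^\pm) = \nu^\pm$. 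The hypothesis of uniqueness for $\Gi$ gives $\nu^+ = \nu^- = \nu$, which under that identification forces $\nu^+_y = \nu^-_y = \nu_y$ for $\nu_S$-a.e.\ $y$, so $\Gi_y$ indeed has a unique Gibbs measure. The delicate step, where I expect the main technical work to lie, is exactly this interchange of disintegration with the monotone boundary-limit construction of $\nu^\pm$; once it is pinned down, the rest is routine.
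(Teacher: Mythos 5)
Your proposal is correct and is structurally parallel to the paper's argument: both reduce the statement to showing that, for $\nu$-a.e.\ boundary condition $\omega$, the specification $\Gi_{S,\omega}$ obtained by freezing the coordinates in $S$ has a \emph{unique} Gibbs measure, and both verify this a.e.-uniqueness in the same two ways (Dobrushin coefficients can only decrease under freezing, which is the paper's Lemma \ref{dobrushin good}; in the attractive case one identifies the conditional measure with the plus and minus phases of the frozen specification --- the ``delicate step'' you flag is precisely Goldstein's global Markov property, which the paper states and proves as a standalone lemma citing \cite{Go80}, and your monotone-coupling sketch is the right proof of it). The one genuine divergence is the concluding mechanism. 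You route through the classical theorem that extremal Gibbs measures are tail-trivial and then observe that the fibers of saturated sets are tail sets; the paper instead (Lemma \ref{saturation general}) applies reverse martingale convergence to the decreasing algebras $\sB(S\cup F_i)$, notes that the limit $\nu\vert_\omega^{\widetilde{\sB(S)}}$ of the conditional measures is again a member of $\MG_{S,\omega}$, and concludes $\nu\vert_\omega^{\widetilde{\sB(S)}}=\nu\vert_\omega^{\sB(S)}$ directly from uniqueness. The two mechanisms carry the same content --- tail-triviality of extremal measures is itself proved by a backward-martingale argument --- but the paper's version is self-contained, whereas yours imports a standard black box; either works. One point you should make explicit: by the paper's convention the intersection defining $\widetilde{\alpha^S}$ is an intersection of $\nu$-completions, so ``the slice $A_y$ lies in the tail'' holds only mod $\nu_y$-null sets along a countable cofinal family of finite sets $F$, and interchanging that countable intersection with completion is exactly the reverse-martingale fact; this is routine but is where the rigor lives.
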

\begin{proof}
It follows from lemma \ref{saturation general}, lemma \ref{dobrushin good} and lemma \ref{attractive good}.
\end{proof}

\begin{proof}[Proof of theorem \ref{main eq}]
$h(\nu)$ will stand for sofic entropy and $h_{Rok}(\nu)$ for Rokhlin entropy. We know that $h(\nu) \leq h_{Rok}(\nu)$ (a well known inequality, see for example \cite{B10a}), $h_{Rok} \leq \expect_{\xi}H(\alpha \vert \alpha^{L_\xi})$ (by theorem \ref{Seward inequality}, we can apply it since action is essentially free by lemma \ref{gibbs free}), and $h(\nu) \geq \expect_{\xi}H(\alpha \vert \alpha^{L_\xi})$ (a combination of the previous proposition and theorem \ref{main ineq}). Together it obviously implies that $h_{Rok}(\mu)=h(\mu)=\expect_{\xi}H(\alpha \vert \alpha^{L_\xi})$. 
For actions of free groups the proposition \ref{f-invariant and sofic} implies that the $f$-invariant could be expressed by means of the same formula.
\end{proof}

We note that these results could be instrumental for encountering phase transitions for Gibbs structures over free groups. It is said that {\em phase transition} occurs if there are more than one Gibbs measures.

\begin{theor}
Suppose $\nu$ is a shift-invariant Gibbs measure for some Gibbs structure over the free group. If corresponent action have nonpositive $f$-invariant, then the phase transition occurs.
\end{theor}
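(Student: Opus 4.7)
The plan is to prove the contrapositive: assuming the Gibbs structure admits a unique shift-invariant Gibbs measure $\nu$ on $A^{F_r}$, I would show that $f(\nu) > 0$.

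Under the uniqueness hypothesis, Theorem \ref{main ineq} applies and yields, for every sofic approximation $\Sigma$ of the free group $F_r$,
\[
h_{\Sigma}(\nu) \;\geq\; \mathbb{E}_{\xi} H\bigl(\alpha \,\big|\, \widetilde{\alpha^{L_\xi}}\bigr) \;\geq\; 0.
\]
Against this, I would invoke Bowen's classical inequality for free groups: for the uniformly random sofic approximation $\Sigma$ of $F_r$ by symmetric groups, $h_{\Sigma}(\nu) \leq f(\nu)$. Since sofic entropy takes values in $\{-\infty\} \cup [0,\infty)$, a strictly negative $f$-invariant would force $h_{\Sigma}(\nu) = -\infty$, contradicting the nonnegative lower bound above. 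This immediately excludes $f(\nu) < 0$ under uniqueness.

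To rule out the boundary case $f(\nu) = 0$, note that equality collapses the full chain, yielding $\mathbb{E}_{\xi} H(\alpha \mid \widetilde{\alpha^{L_\xi}}) = 0$ and hence $H(\alpha \mid \widetilde{\alpha^{L_\xi}}) = 0$ for $\xi$-a.e.\ ordering. That is, $x(e)$ is $\nu$-a.s.\ determined by the saturation of the algebra generated by $\{x(g) : g \in L_\xi\}$. I would derive the contradiction by exploiting the strict positivity of the Gibbs conditional kernel $\bp_{\{e\}}$ on $A$: since the potential $\varphi$ is finite-valued and $|A| \geq 2$, the conditional distribution of $x(e)$ given $x|_{G\setminus\{e\}}$ is strictly positive on every element of $A$, so $H(\alpha \mid \alpha^{G\setminus\{e\}}) > 0$. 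Combined with the inclusion $L_\xi \subset G\setminus\{e\}$ and an argument in the spirit of Proposition \ref{saturation lemma} showing that in the unique-Gibbs regime the saturation adds no extra deterministic information beyond what is already available outside a cofinite window, this contradicts the vanishing of the saturated conditional entropy.

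The principal obstacle is the boundary case $f(\nu) = 0$; the strictly negative case is essentially immediate from Theorem \ref{main ineq} together with Bowen's upper bound. The boundary case requires converting the local positivity of Gibbs kernels into a strict lower bound on $\mathbb{E}_{\xi} H(\alpha \mid \widetilde{\alpha^{L_\xi}})$, which is where the saturation-control machinery underlying Proposition \ref{saturation lemma} would have to be adapted to the unique-Gibbs setting without Dobrushin or attractivity.
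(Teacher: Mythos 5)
Your overall strategy is the paper's: argue the contrapositive, combine Theorem \ref{main ineq} with the relation between the $f$-invariant and sofic entropy (Proposition \ref{f-invariant and sofic}) to get $f(\nu)\geq \expect_{\xi}H(\alpha\mid\widetilde{\alpha^{L_\xi}})$, and then show that this last quantity is strictly positive. Your split into the cases $f<0$ and $f=0$ is harmless but unnecessary: the single chain $f(\nu)\geq\inf_{\Sigma}h_{\Sigma}(\nu)\geq \expect_{\xi}H(\alpha\mid\widetilde{\alpha^{L_\xi}})>0$ disposes of both at once, and you do not need the fact that sofic entropy lies in $\lbrace-\infty\rbrace\cup[0,\infty)$, since Theorem \ref{main ineq} already gives a finite nonnegative lower bound. (One small omission: Proposition \ref{f-invariant and sofic} is stated for essentially free actions; freeness is supplied by Lemma \ref{gibbs free}.)

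The genuine gap is exactly the step you flag as the ``principal obstacle,'' and the difficulty you see there is illusory: no adaptation of Proposition \ref{saturation lemma} is needed, and no Dobrushin or attractivity hypothesis enters. You appear to have the containment backwards. By definition $\widetilde{\alpha^{L_\xi}}=\bigcap_{F\Subset G}\alpha^{L_\xi}\vee\alpha^{G\setminus F}$; taking $F=\lbrace e\rbrace$ and using $e\notin L_\xi$ shows $\widetilde{\alpha^{L_\xi}}\subseteq \alpha^{G\setminus\lbrace e\rbrace}$. The saturation is thus a \emph{sub}algebra of $\alpha^{G\setminus\lbrace e\rbrace}$ --- it cannot determine $x(e)$ any better than the full complement of $e$ does --- so by monotonicity of conditional entropy, $H(\alpha\mid\widetilde{\alpha^{L_\xi}})\geq H(\alpha\mid\alpha^{G\setminus\lbrace e\rbrace})$ for every $\xi$; there is no need to show the saturation ``adds no extra deterministic information.'' The right-hand side equals $\int_{\Omega} H_{\pi_{\Gi,\lbrace e\rbrace}(\omega)}(\alpha)\,d\nu(\omega)$, and since the potential is finite-valued and depends on finitely many coordinates, the one-site kernel $\pi_{\Gi,\lbrace e\rbrace}(\omega)$ charges each of the $|A|\geq 2$ symbols with probability bounded away from $0$ and $1$ uniformly in $\omega$; hence $H(\alpha\mid\alpha^{G\setminus\lbrace e\rbrace})\geq c$ for some $c>0$, and therefore $\expect_{\xi}H(\alpha\mid\widetilde{\alpha^{L_\xi}})\geq c>0$. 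With this observation your argument closes, and it coincides with the paper's (very terse) proof.
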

\begin{proof}
For unique Gibbs measure $\nu$ we will have by the theorem \ref{main ineq} and proposition \ref{f-invariant and sofic} that $f$-invariant is not smaller than $\expect_{\xi}H(\alpha \vert \widetilde{\alpha^{L_\xi}})$ which is positive. 
\end{proof}

These result were partially announced in \cite{A16}.

\subsection{Example: Ising model}
A standard Ising model over group $G$ with fixed finite generating set $S$ and  parameter $\beta$ is defined as follows. Alphabet $A=\lbrace -1, 1\rbrace$, potential $$\varphi(x)= - \beta\sum_{s \in S} x(e)x(s),$$ there  $\beta>0$. Using proposition \ref{attractive criterion} we can easily check that correspondent Gibbs structure is attractive. So the conclusion of the theorem \ref{main eq} holds whenever Gibbs measure is unique.  

Ising model can provide us with example of Gibbs measure with negative $f$-invariant. Let $G$ be a free group with a free generating set $S$.
Consider a standard Cayley tree for the free group (vertex set is group and two elements are connected whenever they have the form $(g,gs)$). We will construct the measure on $A^G$ by assigning the uniform distribution on $A$ for $e\in G$ first. Then we will step-by-step propagate this measure along the edges using the transition matrix
\[
\left( 
\begin{array}{cc}
 \frac{e^{2\beta}}{1+e^{2\beta}} & \frac{1}{1+e^{2\beta}} \\
\frac{1}{1+e^{2\beta}} & \frac{e^{2\beta}}{1+e^{2\beta}}  
\end{array}
\right).
\] 
We refer to \cite{B10d} for details on Markov processes on trees.
It is not hard to check that this measure will be an invariant Gibbs measure for Ising model. Since our measure is given by Markov chain, by the result of \cite{B10d} we can compute the $f$-invariant
\[
f_{\beta,m}=(1- m )\log 2 -  m \left(\frac{e^{2\beta}}{1+e^{2\beta}} + \frac{1}{1+e^{2\beta}}\right),
\] 
there $m =\lvert S \rvert$.
If $G$ is non-abelian, for big enough $\beta$ we will have that this expression is non-positive and hence, phase transition occurs.
So we have the following 
\begin{cor}
If $f_{\beta,m} \leq 0$ then Gibbs measure for Ising model with parameter $\beta$ over the free group of order $m$ has more than one Gibbs measure. If Gibbs measure for this model is unique then its sofic entropy, Rokhlin entropy and $f$-invariant are all equal to $f_{\beta,m}$.
\end{cor}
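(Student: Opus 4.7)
The plan is to combine the preceding phase transition theorem with Theorem \ref{main eq} applied in the attractive case, using the explicit $f$-invariant computation already displayed for the Markov-chain Gibbs measure $\nu_{\beta}$ produced by the tree construction.

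First I would note that the Ising model has already been verified to be attractive via Proposition \ref{attractive criterion}, and that the measure $\nu_{\beta}$ obtained by seeding the root with the uniform distribution on $\{-1,+1\}$ and propagating along edges of the Cayley tree by the displayed transition matrix is a shift-invariant Gibbs measure for the potential $\varphi(x)=-\beta\sum_{s\in S}x(e)x(s)$. Since this measure is a Markov chain on the tree, the formula of \cite{B10d} for the $f$-invariant of tree-Markov measures applies and yields exactly $f_{\beta,m}$, as displayed above.

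For the first assertion, assume $f_{\beta,m}\leq 0$. Then $\nu_{\beta}$ is a shift-invariant Gibbs measure whose $f$-invariant is nonpositive, so the preceding phase transition theorem (the one stating that nonpositive $f$-invariant forces phase transition) applies to $\nu_{\beta}$, giving at least two distinct Gibbs measures for the Ising structure with parameter $\beta$. For the second assertion, suppose the Gibbs measure is unique. Then the unique Gibbs measure must be $\nu_{\beta}$, so its $f$-invariant equals $f_{\beta,m}$. Since the Ising Gibbs structure is attractive and has a unique Gibbs measure, condition (2) of Theorem \ref{main eq} is met, which yields that the sofic entropy (for every sofic approximation), the Rokhlin entropy, and the $f$-invariant all coincide with $\expect_{\xi}H(\alpha\vert\alpha^{L_{\xi}})$; in particular they all equal $f_{\beta,m}$.

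There is no genuinely hard step here: the substantial work has been carried out in Theorem \ref{main eq} and in the preceding phase transition theorem, and the $f$-invariant computation relies on the known tree-Markov formula from \cite{B10d}. The only point that requires a quick sanity check is that the tree-Markov construction does produce a Gibbs measure for the Ising potential, which is standard and amounts to verifying that the one-step conditional law of $\nu_{\beta}$ agrees with the specification kernel $\bp_{\Lambda}$ on edges, a short computation with the transition matrix.
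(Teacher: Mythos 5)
Your proposal is correct and assembles exactly the ingredients the paper intends (and leaves implicit): attractiveness of the Ising structure via Proposition \ref{attractive criterion}, the tree-Markov computation of the $f$-invariant from \cite{B10d} identifying it as $f_{\beta,m}$, the phase-transition theorem for the first claim, and condition (2) of Theorem \ref{main eq} for the second. This matches the paper's (unwritten) argument, so no further comparison is needed.
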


{\em Acknowledgements}

I would like to thank Miklos Abert, Brandon Seward and my advisor Anatoly Vershik for fruitful conversations.
Research is supported by the Russian Science Foundation grant №14-21-00035. 



\section{Preliminaries and conventions}\label{preliminaries}
Symbol $\Subset$ will stand for "finite subset".

All the measures in the sequel will be Borel probabilistic. For the standard Borel space $X$ we will denote $\Me(X)$ the set of all such measures. If $X$ is a metrizable compact we will endow $\Me(X)$ with the weak* topology (unless otherwise is specifically noted). For $X$,$Y$ two standard Borel spaces, $f: X \to Y$ a Borel map and $\mu \in \Me(X)$ we will slightly abuse the notation by writing $f(\mu)$ for the push-forward of measure $\mu$. For two Borel probability measures $\mu_1$ and $\mu_2$ on standard Borel spaces $X_1$ and $X_2$ respectively we will denote $\mu_1 \otimes \mu_2$ the product measure on $X_1 \times X_2$.

Let $X$ be a standard Borel space. We will sometimes use "subalgebra" instead of $\sigma$-subalgebra.
Let $\mu$ be a measure on $X$. For  $\sigma$-subalgebra $\sA$ and $x \in X$ we will denote $\mu\vert_x^{\mu}$ --- the measure correspondent to the point $x$ according to the decomposition of $\mu$ along the subalgebra $\sA$. We refer the reader to \cite{EW11}, section 5.3. for more details. If $\sA \subset \sB$ are two countably-generated subalgebras, then $\mu\vert_x^{\sA}\vert_x^{\sB}=\mu\vert_x^{\sB}$ for $\mu$-a.e. $x \in X$.
Two subalgebras $\sA$ and $\sB$ on $X$ are said to be $\nu$-\mz equivalent if for any $B \in \sA$ there is $B' \in \sB$ with $\nu(B \Delta B')=0$ and vice versa. Two subalgebras $\sA$,$\sB$ are $\mu$ - \mz equivalent iff $\mu\vert_x^{\sA}=\mu\vert_x^{\sB}$ for $\mu$-a.e. $x \in X$. We will denote $\sA \vee \sB$ the minimal subalgebra, containing $\sA$ and $\sB$.
Let $\sA,\sB$ be two subalgebras. If the measure $\mu$ on $X$ is specified then by the intersection of this two subalgebras we will usually mean the intersection of their $\mu$-completions.

Let $X$ be a standard probaility space or a standard Borel space. {\em Partition} is a countable or finite collection of its disjoint subsets whose union is the whole space. It makes sense to consider partition as a particular case of subalgebra.
The {\em Shannon entropy} of a partition $\alpha$ of the satandard probability space $(X,\mu)$ is defined by the formula
\[
H(\alpha)=-\sum_{B \in \alpha} \mu(B) \log(\mu(B))
\]
with the usual convention $0\log 0 =0$. If $\mu$ is the measure on finite or countable set, we will define its entropy $H(\mu)$ as entropy of the partition whose elements are the atoms of this measure.

For two partitions $\alpha$, $\beta$ we will define $\alpha \vee \beta = \lbrace A \cap B \vert A \in \alpha, B \in \beta, A \cap B \neq \varnothing \rbrace$. We will denote
\[
H(\alpha  \vert \beta) = H(\alpha \vee \beta) - H(\beta)
\]
the relative Shannon entropy.
For $\alpha$ a partition and $\sA$ a subalgebra 
\[
H(\alpha\vert \sA) = \inf_{\beta} H(\alpha \vert \beta )
\]
(the infimum is taken along all the $\sA$-measurable partitions $\beta$ of finite Shannon entropy) will stand for the relative Shannon entropy with respect to a subalgebra. It is known that the result will not change if we will take the infimum along all the finite $\sA$-measurable partitions. 

Let $( \sA_i )_{i \in \N}$ be a \mz increasing sequence of subalgebras and let $\sB = \bigcup_{i \in \N} \sA_i$. It is well known that for any partition $\alpha$ we have
\[
H(\alpha \vert \sB) = \lim_{i \to \infty} H(\alpha \vert \sA_i).
\]
For a \mz decreasing sequence $( \sA_i )_{i \in \N}$ of subalgebras let $\sB = \bigcap_{i \in N} \sA_i$. For a finite partition $\alpha$ we have 
\[
H(\alpha\vert\sB)=\lim_{i \to \infty} H(\alpha \vert \sA_i).
\]

For a partition $\alpha$ of finite entropy and a countably-generated subalgebra $\sA$ we have.
\[
H(\alpha \vert \sA) = \int_{X} H_{\mu\vert_x^{\sA}}(\alpha) \; d\mu(x).
\]

Any function $Y$ on the standard probability space $\Omega$ with finite or countable range defines naturally a partition of $\Omega$. So we will use all the notation above for such functions (random variables) as well. For random variables $Y'$ and $(Y_i)_{i \in \N}$ we will denote
\[
H(Y' \vert (Y_i)_{i \in G})
\] 
the relative entropy of $Y'$ with respect to subalgebra generated by functions $(Y_i)_{i \in \N}$.

Let $X$ and $Y$ be two metric compacta. A {\em continuous probability kernel} is an affine Borel map $\bp: \Me(X) \to \Me(Y)$. It is completely determined by its values on $\delta$-measures, moreover for any Borel map $\pi: X \to \Me(Y)$ there is a unique probability kernel $\bp$ such that $\bp(\delta_x)=\pi(x)$. 
For any measure $\mu$ on $X$ and any function $\varphi \in C(Y)$ we have 
\[
\int_Y \varphi(y) d(\bp(\mu))(y) = \int_X d\mu(x) \int_Y \varphi(y) d(\pi(x))(y).
\]
If $\pi$ is a Borel map from $X$ to $\Me(X)$, then the equation above clearly defines a map $\Me(X) \to \Me(Y)$. We will call a map which could be obtained in such a way a {\em probability kernel}. Clearly, continuous probability kernel is a special case of probability kernel. 

Let $(\pi_n)$ be a sequence of maps $X \to \Me(Y)$ and $\pi' : X \to \Me(Y) $. Let $(\bp'_n)$, $\bp'$ be correspondent probability kernels. It is not hard do check that $(\bp_n)$ converges to $(\bp')$ pointwise iff $(\pi_n)$ converges pointwise to $\pi'$.

Let $V$ be a set and let $\lbrace A_v\rbrace_{v \in V}$ be a collection of sets indexed by elements of $V$. We will denote $\pr_{\Lambda}: \prod_{v \in V}A_v \to \prod_{v \in \Lambda}A_v$ the natural projection for $\Lambda \subset V $. 
 
\begin{lem}\label{special kernel decomp}
Suppose $\bp: \Me(X \times Y) \to \Me(X \times Y)$ is such a probability kernel that for any $(x,y) \in X \times Y$ we have $\pr_X(\bp(\delta_{(x,y)}))=\delta_x$,  and $\bp(\delta_{(x,y_1)})=\bp(\delta_{(x,y_2)})$ for $x \in X$ and $y_1,y_2 \in Y$ ($\pr_X$ stands for the natural projection $X \times Y \to X$ ). 
Then for $\mu \in \Me(X \times Y)$ we have $\bp(\mu)=\mu$ iff $\mu\vert_{(x,y)}^{\sA} = \bp(\delta_{(x,y)})$ for $\mu$-a.e. $(x,y)$ in $X \times Y$. 
\end{lem}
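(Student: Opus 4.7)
The plan is to recognize that $\sA$ must be the pullback $\pr_X^{-1}(\sB_X)$ of the Borel $\sigma$-algebra of $X$, so that for $\mu \in \Me(X \times Y)$ the conditional measure $\mu\vert_{(x,y)}^{\sA}$ is the fiber measure over $x$, supported on $\{x\} \times Y$. With this identification, both sides of the claimed equivalence are statements about the disintegration of $\mu$ over its first marginal, and the whole proof reduces to uniqueness of disintegration.

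First I would observe that the hypothesis $\bp(\delta_{(x,y_1)})=\bp(\delta_{(x,y_2)})$ means the measure $\bp(\delta_{(x,y)})$ depends only on $x$; call it $K_x$. The hypothesis $\pr_X(\bp(\delta_{(x,y)}))=\delta_x$ says $K_x$ is supported on $\{x\}\times Y$. Since $\bp$ is affine and extends the map $(x,y)\mapsto\bp(\delta_{(x,y)})$, we have
\[
\bp(\mu) \;=\; \int_{X\times Y} \bp(\delta_{(x,y)})\, d\mu(x,y) \;=\; \int_{X\times Y} K_x \, d\mu(x,y) \;=\; \int_X K_x\, d\mu_X(x),
\]
where $\mu_X=\pr_X(\mu)$. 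Because each $K_x$ lives on $\{x\}\times Y$, this last expression is exactly a disintegration of $\bp(\mu)$ over its first marginal, and that first marginal equals $\mu_X$.

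Next I would write the analogous disintegration of $\mu$ itself along $\sA=\pr_X^{-1}(\sB_X)$. Since $\sA$ separates first coordinates, $\mu\vert_{(x,y)}^{\sA}$ is supported on $\{x\}\times Y$ and depends $\mu$-a.e.\ only on $x$; write it as $\nu_x$. Then
\[
\mu \;=\; \int_{X\times Y} \mu\vert_{(x,y)}^{\sA}\, d\mu(x,y) \;=\; \int_X \nu_x\, d\mu_X(x).
\]
Comparing the two integral representations, $\bp(\mu)=\mu$ is equivalent to $\int K_x\, d\mu_X(x) = \int \nu_x\, d\mu_X(x)$, and by uniqueness of the disintegration over the first marginal this is equivalent to $K_x=\nu_x$ for $\mu_X$-a.e.\ $x$, i.e.\ $\bp(\delta_{(x,y)}) = \mu\vert_{(x,y)}^{\sA}$ for $\mu$-a.e.\ $(x,y)$.

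The only minor obstacle is the measurability/regularity needed to commute $\bp$ with the integral $\mu=\int\delta_{(x,y)}\,d\mu$; this is standard for probability kernels (approximating $\mu$ by finite convex combinations of point masses and using weak-$*$ continuity on $\delta$-measures, or invoking the kernel representation $\bp(\mu)(B)=\int K_x(B)\,d\mu(x,y)$ directly). Once that is in hand, the argument is just the uniqueness of disintegrations.
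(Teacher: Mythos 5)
Your proof is correct. The paper states Lemma \ref{special kernel decomp} without any proof, so there is nothing to compare against; your argument --- identifying $\sA$ as $\pr_X^{-1}(\sB_X)$ (the only reading consistent with how the lemma is later applied to $\sB(V\setminus\Lambda)$), writing $\bp(\mu)=\int_X K_x\,d\mu_X(x)$ with $K_x$ supported on $\lbrace x\rbrace\times Y$ and $\pr_X(\bp(\mu))=\mu_X$, and then invoking a.e.\ uniqueness of the disintegration over the first marginal --- is the standard argument the author evidently had in mind, and it correctly supplies the omitted details (the interchange of $\bp$ with the integral is immediate from the paper's defining identity for probability kernels, so no approximation by point masses is needed).
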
 

A {\em dynamical system} or a {\em p.m.p. action} or a {\em $G$-space} is a measure-preserving action of a countable group on the standard proobability space. Let us fix a p.m.p. action of countable group $G$ on the standard probability space $(X,\mu)$. For a partition $\alpha$ and $g \in G$ we will denote $\alpha^g = \lbrace g^{-1}(B) \vert B \in \alpha\rbrace$. For $F \subset G$. Let $\alpha^F = \bigvee_{g \in F} \alpha^g$. It makes sense to consider the latter as a partition for finite $F$ and as a subalgebra in the other case.
We will say that partition $\alpha$ is {\em generating} if $\alpha^G$ $\mu$-\mz equivalent to the algebra of all the measurable sets. It is known that $\alpha$ is a generating partition iff there is such a conull subset $X'$ of $X$ that $x,y \in X'$ are non-equal whenever there is such $g \in G$ that $g(x)$ and $g(y)$ belong to different elements of $\alpha$. 

{\em Rokhlin entropy} is the infimum of Shannon entropies of generating partitions relative to the subalgebra of invariant subsets:
\[
h_{Rok}=\inf \lbrace H(\alpha \vert \sI)\text{, $\alpha$ is generating partition}\rbrace,
\]
$\sI$ stands for the subalgebra of invariant sets.
For ergodic action this definition reduces to mere infimum of entropies along all the generating partitions.

Let $G$ be a countable group and $A$ be a finite set (an {\em alphabet}). On the space $A^G$ endowed with the product topology (assuming the discrete topology on $A$) we will define the {\em shift action} by the formula
\[
(gx)(h)= x(hg)
\]
for $x \in A^G$ and $g,h \in G$. This action is continuous. Measure $\nu$ on $A^G$ is said to be invariant if $g(\nu)=\nu$ for every $g \in G$. Let $B_a$ for $a \in A$ be the set of such $x \in A^G$, that $x(e)=a$ ($e$ stands for group identity). Partition $\alpha=\lbrace B_a\vert a \in A\rbrace$ will be called a canonical alphabet partition. For $\sA$ a subalgebra we will denote 
\[
\widetilde{\sA}=\bigcap_{F \Subset G} \sA \vee \alpha^{G \setminus F},
\]
we will call $\widetilde{\sA}$ a {\em saturation} of $\sA$.

\subsection{Sofic groups and sofic entropy}\label{sofic entropy}

For a finite set $R$ we will denote $\Sym(R)$ the group of its permutations. We define a normalized Hamming distance $d_H$ on $\Sym(R)$ by 
\[
d_H(g_1,g_2)=\frac{\left\lvert\lbrace r \in R, g_1(r) \neq g_2(r)\rbrace \right\rvert}{\lvert R\rvert}.
\]

Let $G$ be a countable group. The {\em sofic approximation} is defined by the sequence of finite sets $(V_i)_{i \in \N}$ and the sequence of maps $( \sigma_i )_{i \in \N}$, $\sigma_i^{\cdot} : G \to \Sym(V_i)$, such that 
\begin{enumerate}
\item for $g_1 \neq g_2$ from $G$ we have $\lim_{i \to \infty} d_H(\sigma_i^{g_1},\sigma_i^{g_2}) = 1$,
\item for $g_1,g_2$ from $G$ we have 
$\lim_{i \to \infty} d_H(\sigma_i^{g_1} \circ \sigma_i^{g_2}, \sigma_i^{g_1 g_2})=0$.
\end{enumerate}

A group is called {\em sofic group} if it posesses at least one sofic approximation. From now on let $G$ be a sofic group and its sofic approximation be fixed.

We will say that an element $v \in V_i$ if $S$-good for $S \Subset G$ if
\begin{enumerate}
\item $\sigma_i^{g_1}(v) \neq \sigma_i^{g_2}(v)$ for $g_1 \neq g_2$ from $S$,
\item $(\sigma_i^{g_1} \circ \sigma_i^{g_2})(w)=\sigma_i^{g_1 g_2}(w)$ for $g_1,g_2 \in S$ and $w \in \sigma_i^{S}(v)$,
\item $(\sigma_i^{g^{-1}}\circ \sigma_i^g)(w)=w$ for $w \in \sigma_i^S$,
\item If $w \in \sigma_i^{S}(v)$, $t \in V_i$ and $g \in S$ are such that $w=\sigma_i^g(t)$, then $t=\sigma_i^{g^{-1}}(w)$.
\end{enumerate}

A simple counting argument enables one to prove the folowing lemma:
\begin{lem}\label{sofic is good}
Let $S$ be any finite subset of $G$. Let $V'_i$ (for every $i \in \N$) be a set of $S$-good points in $V_i$. Then $\lim_{i \to \infty} \lvert V_i'\rvert / \lvert V_i \rvert = 1$.
\end{lem}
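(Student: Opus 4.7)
The plan is to bound, for each of the four conditions in the definition of $S$-goodness, the fraction of $v \in V_i$ that violates the condition, and then take a union. Since $S$ is finite and we only ever consider pairs of elements from $S$, each condition reduces to finitely many ``bad events'' which must each be checked to have density going to $0$.

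Condition 1 follows immediately from the first soficity axiom: for each $g_1 \neq g_2$ in $S$, $d_H(\sigma_i^{g_1},\sigma_i^{g_2}) \to 1$, so the set of $v$ with $\sigma_i^{g_1}(v) = \sigma_i^{g_2}(v)$ has vanishing density, and there are only $\binom{|S|}{2}$ pairs to consider. For the remaining conditions I will use the following simple observation: since each $\sigma_i^g$ is a permutation of $V_i$, for any $B \subset V_i$ the preimage $(\sigma_i^g)^{-1}(B)$ has the same cardinality as $B$, and therefore
\[
\bigl|\{v \in V_i : \sigma_i^S(v) \cap B \neq \varnothing\}\bigr| \leq \sum_{g \in S}\bigl|(\sigma_i^g)^{-1}(B)\bigr| = |S|\cdot|B|.
\]
So whenever a ``pointwise bad set'' $B \subset V_i$ has $|B|/|V_i|\to 0$, the set of $v$ whose $\sigma_i^S$-orbit meets $B$ also has vanishing density.

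For condition 2, apply this with $B^{(2)} = \bigcup_{g_1,g_2\in S}\{w : \sigma_i^{g_1}\sigma_i^{g_2}(w) \neq \sigma_i^{g_1 g_2}(w)\}$; its density tends to $0$ directly from the second soficity axiom. For conditions 3 and 4, I first deduce that $\sigma_i^e$ is close to the identity: taking $g_2 = e$ in the second axiom gives $d_H(\sigma_i^{g_1}\sigma_i^e,\sigma_i^{g_1}) \to 0$, and composing with $(\sigma_i^{g_1})^{-1}$ on the left (which preserves $d_H$) yields $d_H(\sigma_i^e,\mathrm{id}) \to 0$. Then
\[
d_H(\sigma_i^{g^{-1}}\sigma_i^g,\mathrm{id}) \leq d_H(\sigma_i^{g^{-1}}\sigma_i^g,\sigma_i^e) + d_H(\sigma_i^e,\mathrm{id}) \to 0
\]
and analogously $d_H(\sigma_i^g\sigma_i^{g^{-1}},\mathrm{id}) \to 0$. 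Define $B^{(3)}$ and $B^{(4)}$ as the unions over $g \in S$ of the sets of $w$ where these two compositions fail to act as the identity; each has vanishing density, and the same orbit-preimage bound above takes care of the $v$'s whose $\sigma_i^S$-orbit meets them. Note that condition 4 unravels to exactly ``$\sigma_i^g(\sigma_i^{g^{-1}}(w)) = w$'' for all $w \in \sigma_i^S(v)$ since $\sigma_i^g$ is a permutation, which is the $B^{(4)}$ case.

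Intersecting the four density-one sets produced for conditions 1--4 gives the set $V_i'$, whose density therefore tends to $1$. There is no real obstacle here: the only mildly non-obvious point is the orbit-preimage counting bound, which is the standard way to pass from a ``bad set'' in $V_i$ to the small collection of $v$ whose entire $S$-pattern sees that bad set.
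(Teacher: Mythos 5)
Your proof is correct and is precisely the ``simple counting argument'' the paper alludes to (the paper itself omits the proof of Lemma \ref{sofic is good}): each of the four defining conditions is reduced to a bad set of vanishing density via the two soficity axioms, and your orbit--preimage bound $\lvert\{v : \sigma_i^S(v)\cap B\neq\varnothing\}\rvert\le\lvert S\rvert\cdot\lvert B\rvert$ is the standard way to transfer pointwise bad sets to the $v$'s whose $S$-pattern meets them. The auxiliary observation that $d_H(\sigma_i^e,\mathrm{id})\to 0$, needed for conditions 3 and 4, is also handled correctly.
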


Let $A$ be a finite set. We define a collection of maps $\theta_v: A^{V_i} \to A^G$ by the formula $\left(\theta_v(\tau)\right)(g)=\tau\left(\sigma_i^g(v)\right)$. 
Let $\nu$ be a shift-invariant Borel probability measure on $A^G$. Let $l$ be some metric for the weak* topology on $\Me(A^G)$.
For $\varepsilon>0$ and $i \in \N$ we denote $\Hom(i,\varepsilon)$ the set of all such $\tau \in A^{V_i}$ that 
\[
l\left(\frac{1}{\lvert V_i\rvert}\sum_{v \in V_i}\delta_{\theta_v(t)}, \nu \right) < \varepsilon.
\]
Then {\em sofic entropy} of shift-action endowed with  measure $\nu$ is defined as 
\[
h(\nu)=\inf_{\varepsilon>0} \limsup_{i \to \infty} \frac{\log \lvert \Hom(i,\varepsilon)\rvert}{\lvert V_i\rvert}.
\]
It was originally defined in the work \cite{B10b}. It seemingly depends on the choice of the symbolic representation. In fact as was proved by Bowen this quantity is measure conjugacy invariant. Up to date survey on sofic entropy could be found in \cite{B17}.

\subsection{$f$-invariant}\label{f-invariant}

Let $G$ be a finitely generated free group with the set of generators $s_1,\ldots,s_m$. Let us fix a p.m.p. action of this group on the standard probability space. Suppose it has a finite generating partition $\alpha$. Let $C_r$ be the $r$-ball around the group identity with respect to the word-metric generated by the set $\lbrace s_1^{\pm}, \ldots, s_m^{\pm} \rbrace$. 
We will define {\em $f$-invariant} of given $G$-action as
\[
h_f = \inf_{r \in \N} \left\lbrace (1-2m)H(\alpha^{C_r}) - \sum_{i=1}^m H(\alpha^{C_r \cup s_i C_r}) \right\rbrace.
\]

In the paper \cite{B10a} Bowen defined this quantity and proved that it does not depend on the choise of the finite generating partition. He also showed that it equals to the base space entropy of Bernoulli shifts thus solving the isomorphism problem for Bernoulli actions of free groups. 

In the paper \cite{H14} Hayes proved, building upon the result of \cite{B10c}, the result relating sofic entropy  and $f$-invariant.

\begin{prop}\label{f-invariant and sofic}
Let us fix an essentially free action of free group. Then its $f$-invariant is bounded between supremum and infimum of its sofic entropies along all the sofic approximations.
\end{prop}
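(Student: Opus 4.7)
The plan is to deduce this proposition as a corollary of the random sofic approximation results of Bowen \cite{B10c} and Hayes \cite{H14}, via a one-line pigeonhole observation.

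First I would recall the \emph{uniform random sofic approximation} of the free group $G$ with free generators $s_1,\ldots,s_m$: take $V_n = \lbrace 1,\ldots,n\rbrace$, independently draw uniformly distributed permutations $\sigma_n^{s_i} \in \Sym(V_n)$ for $i=1,\ldots,m$, and extend these to a map $\sigma_n : G \to \Sym(V_n)$ using the free group relations. A first-moment estimate on the expected number of short cycles of a uniform random permutation shows that this random sequence $\Sigma_{\mathrm{rand}} = (V_n,\sigma_n)_{n \in \N}$ is almost surely a sofic approximation of $G$ in the sense of subsection \ref{sofic entropy}.

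Next I would invoke the Hayes--Bowen theorem: for an essentially free p.m.p.\ action of $G$ with invariant measure $\nu$, the sofic entropy $h_{\Sigma_{\mathrm{rand}}}(\nu)$ computed along the random approximation equals $h_f$ almost surely (the formulation in \cite{H14} is an almost-sure equality, that in \cite{B10c} is only in expectation but still suffices). Conditional on this cited input, for almost every realization of $\Sigma_{\mathrm{rand}}$ we have a genuine sofic approximation whose sofic entropy is exactly $h_f$, hence
\[
\inf_{\Sigma} h_{\Sigma}(\nu) \;\leq\; h_f \;\leq\; \sup_{\Sigma} h_{\Sigma}(\nu),
\]
the extremes being taken over all sofic approximations $\Sigma$ of $G$. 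If instead only the averaged identity $\expect[h_{\Sigma_{\mathrm{rand}}}(\nu)] = h_f$ is used, the same conclusion follows since an expectation lies between the essential infimum and essential supremum of the random variable it averages.

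The substantive difficulty is thus deferred entirely to the cited results. The heart of Bowen's argument (inherited by Hayes) is a concentration-of-measure estimate showing that the exponential growth rate of the number of good microstates over a random sofic approximation concentrates around an explicit entropy formula, which in turn matches the combinatorial definition of $h_f$; this step is the main obstacle and I would take it as a black box.
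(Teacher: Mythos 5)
The paper gives no proof of this proposition at all --- it is quoted as a known result of Hayes \cite{H14}, building on Bowen \cite{B10c} --- so deferring to those references is, in itself, consistent with what the author does. The problem is that the input you attribute to them is not what they prove, and the misstatement hides the entire content of the proposition. Bowen's theorem in \cite{B10c} is an \emph{annealed} identity: $h_f=\inf_{\varepsilon>0}\limsup_n\frac{1}{n}\log\expect_{\sigma_n}\,N_n(\varepsilon,\sigma_n)$, where $N_n(\varepsilon,\sigma_n)$ is the number of good microstates for a uniformly random homomorphism $\sigma_n:G\to\Sym(n)$ and the expectation sits \emph{inside} the logarithm. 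Neither \cite{B10c} nor \cite{H14} proves the quenched statement you invoke, namely that $h_{\Sigma_{\mathrm{rand}}}(\nu)=h_f$ almost surely (nor the averaged version $\expect\,[h_{\Sigma_{\mathrm{rand}}}(\nu)]=h_f$); to my knowledge no such two-sided concentration is available in general. Since Hayes's actual theorem in \cite{H14} is precisely the sandwich you are asked to prove, citing a strictly stronger unproved statement and deducing the sandwich from it is essentially assuming the conclusion.

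Concretely, only one half of the proposition falls out of the annealed formula by soft arguments: Markov's inequality plus Borel--Cantelli gives, almost surely, $\limsup_n\frac{1}{n}\log N_n\le\limsup_n\frac{1}{n}\log\expect N_n$, and since the random permutations a.s.\ form a sofic approximation this yields a $\Sigma$ with $h_{\Sigma}(\nu)\le h_f$, hence $\inf_{\Sigma}h_{\Sigma}(\nu)\le h_f$. The other half, $h_f\le\sup_{\Sigma}h_{\Sigma}(\nu)$, does not follow from averaging: an event of exponentially small probability on which $N_n$ is exponentially large can dominate $\expect N_n$, and the permutation tuples realizing that event need not be close to sofic (they may have many fixed points), so they cannot naively be assembled into a sofic approximation witnessing the growth rate. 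This is exactly where the hypothesis of essential freeness enters in Hayes's argument --- it forces any $\sigma$ carrying many good microstates to be approximately free, hence usable as a piece of a sofic approximation. Your sketch never uses essential freeness, which is the clearest sign that the hard direction has been lost inside the black box.
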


\section{Gibbs measures}
\subsection{General definitions}\label{gibbs measures general}
We define a {\em Gibbs structure} $\Gi$ as a tripple of a finite or countable vertex set $V$, a collection of finite sets({\em alphabets}) $( A_v)_{v \in V}$, and a potential $( \Phi_T )_{T \Subset V}$: a set of functions $\Phi_T: \prod_{v \in T} A_v \to \R$ such that for any $v \in V$ all but finitely many $\Phi_T$ for $T \ni v$ are identically zero. We wil also apply this functions not only to respective $\prod_{v \in T} A_v$, but to $\prod_{v \in V} A_v$ also. Consider the set $\Omega= \prod_{v \in V} A_v$ endowed with the product topology inherited from the discrete topologies on $A_v$'s. For every $W \subset V$ we define a natural projection map $\pr_{V}: \prod_{v \in V}A_v \to \prod_{w \in W}A_w$. We will denote $\sB(W)$ preimage of natural Borel algebra on $\prod_{w \in W} A_w$ under the map $\pr_{W}$. 
Now we would like to define the set of Gibbs measures for this Gibbs structure. At first, let us define the collection of maps $( \pi_{\Gi,\Lambda} )_{\Lambda \Subset G}$, $\pi_{\Gi,\Lambda}$ will map the point $\omega \in \Omega$ to the probability measure $\pi_{\Gi,\Lambda}(\omega)$ supported on the set of points $\omega'$ such that $\pr_{V \setminus \Lambda}(\omega)=\pr_{V \setminus \Lambda}(\omega')$, and 
 such that the probability of the point $\omega'$ from the support is proportional to $\exp{(-\sum_{T \cap \Lambda \neq \varnothing} \Phi_T(\omega'))}$. Note that this uniquelly determines maps and they are continuous. 
 We then expand each of this maps to probabilistic kernel $\bp_{\Gi,\Lambda}: \Me(\Omega) \to \Me(\Omega)$, $\Lambda \Subset V$. 
 It could be easily checked that for $\Lambda \subset \Lambda' \Subset V$ we have $\bp_{\Gi,\Lambda} \circ \bp_{\Gi,\Lambda'} = \bp_{\Gi,\Lambda'} \circ \bp_{\Gi,\Lambda} = \bp_{\Gi,\Lambda'}$. The set of {\em Gibbs meaures} $\MG$ for given Gibbs structure $\Gi$ is defined to be the set of all such Borel probability measures $\nu$ on $\Omega$ that $\bp_{\Gi, \Lambda}(\nu)=\nu$ for all $\Lambda \Subset V$. It follows from compactness that this set is nonempty. It is not hard to see that in the special case of finite vertex set the set of Gibbs measures always contains unique measure which is defined by the fact that measure of any $\omega \in \Omega$ is propotional to $\exp(\sum_{T \subset V}\Phi_T(\omega))$.

The following lemma directly follows from lemma \ref{special kernel decomp}:
\begin{lem}
For measure $\nu$ we have $\bp_{\Gi,\Lambda}(\nu)=\nu$ iff for $\nu$-a.e. $\omega \in \Omega$ we have $\nu \vert_{\omega}^{\sB(V \setminus \Lambda )} = \pi_{\Gi,\Lambda}(\omega)$.
\end{lem}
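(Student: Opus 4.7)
The plan is to invoke lemma \ref{special kernel decomp} after choosing the right product decomposition of $\Omega$ and verifying that $\bp_{\Gi,\Lambda}$ fits its hypotheses.

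First I would decompose $\Omega = \prod_{v \in V} A_v$ as $X \times Y$ with $X = \prod_{v \in V \setminus \Lambda} A_v$ and $Y = \prod_{v \in \Lambda} A_v$, using the canonical product identification. Under this identification the projection $\pr_X$ is exactly $\pr_{V \setminus \Lambda}$, and the sub-$\sigma$-algebra $\sB(V \setminus \Lambda)$ coincides with the pullback of the Borel $\sigma$-algebra on $X$ (this is the algebra $\sA$ appearing in lemma \ref{special kernel decomp}).

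Next I would verify the two hypotheses of the lemma for $\bp_{\Gi,\Lambda}$. Since $\pi_{\Gi,\Lambda}(\omega)$ is, by definition, supported on $\{\omega' : \pr_{V \setminus \Lambda}(\omega') = \pr_{V \setminus \Lambda}(\omega)\}$, pushing forward under $\pr_X$ gives the Dirac mass $\delta_{\pr_{V\setminus\Lambda}(\omega)}$, so the first hypothesis holds. For the second, observe that both the support of $\pi_{\Gi,\Lambda}(\omega)$ and the unnormalized weights $\exp(-\sum_{T \cap \Lambda \neq \varnothing}\Phi_T(\omega'))$ assigned to $\omega'$ in that support depend only on $\pr_{V \setminus \Lambda}(\omega)$ (the support explicitly, the weights because they are functions of $\omega'$ alone and the normalization is determined once the support is fixed). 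Hence $\pi_{\Gi,\Lambda}(\omega_1) = \pi_{\Gi,\Lambda}(\omega_2)$ whenever $\pr_{V \setminus \Lambda}(\omega_1) = \pr_{V \setminus \Lambda}(\omega_2)$, which is the second hypothesis.

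With both hypotheses in hand, lemma \ref{special kernel decomp} applied to $\bp = \bp_{\Gi,\Lambda}$ yields directly that $\bp_{\Gi,\Lambda}(\nu) = \nu$ iff $\nu|_{\omega}^{\sB(V \setminus \Lambda)} = \bp_{\Gi,\Lambda}(\delta_\omega) = \pi_{\Gi,\Lambda}(\omega)$ for $\nu$-a.e.\ $\omega$. There is no real obstacle; the only thing to be careful about is keeping the identification between $\sB(V \setminus \Lambda)$ and the pullback $\sigma$-algebra through $\pr_X$ explicit, so that the conditional measures in the two formulations agree.
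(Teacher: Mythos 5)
Your proof is correct and follows exactly the route the paper intends: the paper states that this lemma ``directly follows from Lemma~\ref{special kernel decomp}'', and your argument simply spells out the product decomposition $\Omega = \prod_{v\in V\setminus\Lambda}A_v \times \prod_{v\in\Lambda}A_v$ and the verification of that lemma's two hypotheses. Nothing is missing.
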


For $\omega \in \Omega$ and $S \subset V$ we denote $\Gi_{S,\omega}$ new Gibbs structure having the only difference with $\Gi$ in the collection of alphabets: $A_v'=A_v$ for $v \notin S$ and $A_v'=\lbrace \omega_v\rbrace$ for $v \in S$. It is worth to note that Gibbs measures for this Gibbs structures are measures on $\Omega=\prod_{v \in V}A_v$. 

Following lemma is obvious.
\begin{lem}
Measure $\nu$ is Gibbs measure for $\Gi_{S,\omega}$ iff $t(v)=\omega(v)$ for $\nu$-a.e. $t \in \Omega$ and for every $v \in S$, and $\bp_{\Gi,\Lambda}(\nu)=\nu$ for every $\Lambda \Subset V \setminus S$.
\end{lem}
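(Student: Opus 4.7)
My plan is to prove both directions by comparing the local specifications $\pi_{\Gi_{S,\omega},\Lambda'}$ and $\pi_{\Gi,\Lambda}$ on configurations that agree with $\omega$ on $S$, where $\Lambda = \Lambda' \setminus S$. Intuitively, $\Gi_{S,\omega}$ is just $\Gi$ with the $S$-coordinates frozen at $\omega|_S$: the singleton alphabets $A'_v = \{\omega(v)\}$ on $S$ kill any freedom there while leaving the potentials $\Phi_T$ untouched. The machinery of lemma \ref{special kernel decomp} lets me pass freely between $\bp(\nu)=\nu$ and the disintegration-level statement about $\nu\vert_\omega^{\sB(V\setminus\Lambda)}$.

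For the forward direction, suppose $\nu$ is Gibbs for $\Gi_{S,\omega}$. The concentration condition $t(v) = \omega(v)$ on $S$ is immediate because $\nu$ is supported on $\prod_v A'_v$. For any $\Lambda \Subset V \setminus S$, I would observe directly from the definition that $\pi_{\Gi_{S,\omega},\Lambda}(t)$ and $\pi_{\Gi,\Lambda}(t)$ have the same support (points agreeing with $t$ outside $\Lambda$, which automatically keeps the $\omega$-values on $S$) and the same unnormalized weights (the potentials $\Phi_T$ are shared), hence $\bp_{\Gi,\Lambda}(\nu) = \bp_{\Gi_{S,\omega},\Lambda}(\nu) = \nu$.

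For the reverse direction, I fix an arbitrary $\Lambda' \Subset V$ and set $\Lambda := \Lambda' \setminus S \Subset V \setminus S$. I must compare weights: $\pi_{\Gi_{S,\omega},\Lambda'}$ uses $\sum_{T \cap \Lambda' \neq \varnothing} \Phi_T$, while $\pi_{\Gi,\Lambda}$ uses $\sum_{T \cap \Lambda \neq \varnothing} \Phi_T$. The extra terms are those $T$ with $T \cap \Lambda' \subseteq S$ and $T \cap \Lambda = \varnothing$; on the common support these terms depend only on coordinates that are frozen (either outside $\Lambda$, so equal to the values of $t$, or inside $S$, so equal to $\omega$). Such terms therefore contribute a common multiplicative factor that cancels upon normalization, giving $\pi_{\Gi_{S,\omega},\Lambda'}(t) = \pi_{\Gi,\Lambda}(t)$ for $\nu$-typical $t$. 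Combined with the hypothesis $\bp_{\Gi,\Lambda}(\nu) = \nu$ (valid for $\Lambda \Subset V \setminus S$, including the degenerate case $\Lambda = \varnothing$), this yields $\bp_{\Gi_{S,\omega},\Lambda'}(\nu) = \nu$ for all $\Lambda' \Subset V$, as required.

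The only step needing genuine care is this cancellation of the ``hidden'' potential terms when $\Lambda' \cap S$ is nonempty; the rest is a direct unwinding of definitions, which is presumably why the author simply calls the lemma obvious.
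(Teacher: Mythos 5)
Your argument is correct and is precisely the definition-unwinding the paper has in mind when it declares this lemma obvious and omits any proof: the key observations — that for $\Lambda\Subset V\setminus S$ the kernels $\bp_{\Gi,\Lambda}$ and $\bp_{\Gi_{S,\omega},\Lambda}$ coincide on measures concentrated on $\{t: t\vert_S=\omega\vert_S\}$, and that for general $\Lambda'\Subset V$ the extra potential terms with $T\cap\Lambda'\subseteq S$ are constant on the support and cancel under normalization, reducing $\bp_{\Gi_{S,\omega},\Lambda'}$ to $\bp_{\Gi,\Lambda'\setminus S}$ — are exactly the right ones. The only point worth tightening is the forward direction's concentration claim: it is cleanest to derive $t(v)=\omega(v)$ a.e.\ for each $v\in S$ from invariance under $\bp_{\Gi_{S,\omega},\{v\}}$, whose output is by construction supported on configurations with $v$-coordinate in $A'_v=\{\omega(v)\}$, rather than asserting support in $\prod_v A'_v$ outright.
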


We now show that Gibbs measure for $\Gi$ could be decomposed into Gibbs measures for $\Gi_{F,\omega}$.

\begin{lem}\label{gibbs decomp}
Suppose $\nu \in \MG$, $F \subset G$. Then $\nu\vert_{\omega}^{\sB(F)} \in \MG_{F,\omega}$ for $\nu$ -- a.e. $\omega \in \Omega$.
\end{lem}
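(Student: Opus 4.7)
The plan is to verify, for $\nu$-a.e.\ $\omega$, the two characterizing properties of $\MG_{F,\omega}$ supplied by the lemma preceding the statement: namely that $\nu\vert_{\omega}^{\sB(F)}$ is concentrated on the fiber $\lbrace t\in\Omega \colon t(v)=\omega(v) \text{ for all } v\in F\rbrace$, and that it is fixed by $\bp_{\Gi,\Lambda}$ for every $\Lambda\Subset V\setminus F$. The first property is automatic from the disintegration: by definition of $\nu\vert_\omega^{\sB(F)}$, for $\nu$-a.e.\ $\omega$ the conditional measure is supported on the atom of $\sB(F)$ containing $\omega$, which is exactly the prescribed fiber. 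Note also that for $\Lambda\Subset V\setminus F$ the kernels $\bp_{\Gi,\Lambda}$ and $\bp_{\Gi_{F,\omega},\Lambda}$ coincide on any measure supported on that fiber, since the Gibbs weight $\exp(-\sum_{T\cap\Lambda\neq\varnothing}\Phi_T(\cdot))$ depends only on values indexed by $V\setminus F$ together with the fixed values on $F$, and $\bp_{\Gi,\Lambda}$ only resamples the $\Lambda$-coordinates. Hence it suffices to show invariance under $\bp_{\Gi,\Lambda}$.

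The main step is this invariance. For a fixed $\Lambda\Subset V\setminus F$, applying the lemma between \ref{gibbs measures general} and the present statement to $\nu$ itself gives $\nu\vert_{\omega'}^{\sB(V\setminus\Lambda)} = \pi_{\Gi,\Lambda}(\omega')$ for $\nu$-a.e.\ $\omega'$. Since $F\subset V\setminus\Lambda$, we have $\sB(F)\subset\sB(V\setminus\Lambda)$, so the tower property quoted in section \ref{preliminaries}, $\mu\vert_x^{\sA}\vert_x^{\sB}=\mu\vert_x^{\sB}$ for $\sA\subset\sB$, yields $\nu\vert_\omega^{\sB(F)}\vert_{\omega'}^{\sB(V\setminus\Lambda)} = \nu\vert_{\omega'}^{\sB(V\setminus\Lambda)}$ for $\nu\vert_\omega^{\sB(F)}$-a.e.\ $\omega'$. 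Combining these two identities and using affinity of the probability kernel $\bp_{\Gi,\Lambda}$ gives
\[
\nu\vert_\omega^{\sB(F)} \;=\; \int_\Omega \nu\vert_{\omega'}^{\sB(V\setminus\Lambda)} \, d\nu\vert_\omega^{\sB(F)}(\omega') \;=\; \int_\Omega \pi_{\Gi,\Lambda}(\omega') \, d\nu\vert_\omega^{\sB(F)}(\omega') \;=\; \bp_{\Gi,\Lambda}\bigl(\nu\vert_\omega^{\sB(F)}\bigr),
\]
which is the desired fixed-point equation.

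Finally, to get a single $\nu$-conull set of $\omega$'s on which all the required identities hold simultaneously, I would intersect the conull sets obtained above over the countable family of finite subsets $\Lambda$ of $V\setminus F$ (and the fiber-concentration set), which is harmless. The only point requiring some care is the use of the tower property, which must be invoked along a countable family of pairs of subalgebras; since all relevant subalgebras are countably generated, this causes no difficulty. I expect no serious obstacle: the argument is essentially a bookkeeping exercise once the right form of the disintegration tower is identified, and the compatibility $\bp_{\Gi_{F,\omega},\Lambda} = \bp_{\Gi,\Lambda}$ on the fiber is a direct reading of the definitions of $\Gi_{F,\omega}$ and of the Gibbs kernel.
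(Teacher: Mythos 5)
Your proposal is correct and follows essentially the same route as the paper: fiber concentration from the disintegration, the tower property applied to $\sB(F)\subset\sB(V\setminus\Lambda)$ to identify $\nu\vert_\omega^{\sB(F)}\vert_{\omega'}^{\sB(V\setminus\Lambda)}$ with $\pi_{\Gi,\Lambda}(\omega')$, and then the characterization of $\bp_{\Gi,\Lambda}$-invariance via conditional measures (which you make explicit by the barycenter integral, and which the paper leaves as ``putting this together''). The only difference is that you spell out the final integration step and the countable intersection of conull sets, which the paper's terser proof takes for granted.
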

\begin{proof}
It is easy to see that for any $v \in F$ and $\nu$-a.e. $\omega\in \Omega$ we have $\nu(\lbrace t\in\Omega\vert t(v)=\omega(v)\rbrace)=1$. For $\Lambda \Subset V \setminus F$ we denote $\sA=\sB(V \setminus \Lambda)$. Since $\sA \subset \sB(F)$ and both are countably generated, we have $\nu\vert_{\omega}^{\sB(F)}\vert_{\omega}^{\sA}=\nu\vert_{\omega}^{\sA}=\pi_{\Gi,\Lambda}$ for $\nu$-a.e. $\omega \in \Omega$ (we used the fact that $\nu \in \MG$). Putting this together we get the desirable result.
\end{proof}

\begin{lem}
Suppose is $( F_k )_{k \in \N}$ is such a sequence of the subsets of $G$, that $F_i \subset F_j$ for $i > j$. Let $F=\bigcap_{i \in \N}F_i$. Suppose $\omega \in \Omega$ is some point and $( \nu_k )_{k \in \N}$ is a sequence of measures weakly converging on $\Omega$ such that $\nu_k \in \MG_{F_k,\omega}$ for $k \in \N$. Then the limiting measure belongs to $\MG_{F,\omega}$.
\end{lem}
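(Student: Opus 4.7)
The plan is to verify the two conditions from the preceding lemma characterizing membership in $\MG_{F,\omega}$: namely, (a) $t(v)=\omega(v)$ for $\nu$-a.e.\ $t$ and every $v \in F$, and (b) $\bp_{\Gi,\Lambda}(\nu)=\nu$ for every $\Lambda \Subset V \setminus F$, where $\nu$ denotes the weak* limit of $(\nu_k)_{k \in \N}$.

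For (a), fix $v \in F$. Since $F \subset F_k$ for every $k$, each $\nu_k \in \MG_{F_k,\omega}$ assigns full mass to the clopen cylinder $C_v := \lbrace t \in \Omega : t(v)=\omega(v)\rbrace$. Because $\mathbf{1}_{C_v}$ is continuous on $\Omega$, weak* convergence immediately gives $\nu(C_v)=1$.

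For (b), fix $\Lambda \Subset V \setminus F$. Since $(F_k)$ is decreasing with intersection $F$ and $\Lambda$ is finite, for each $v \in \Lambda$ there exists $k_v$ with $v \notin F_{k_v}$, hence $v \notin F_k$ for every $k \geq k_v$. Taking $K=\max_{v\in\Lambda}k_v$, we have $\Lambda \Subset V \setminus F_k$ for all $k \geq K$, so the defining relation for $\MG_{F_k,\omega}$ gives $\bp_{\Gi,\Lambda}(\nu_k)=\nu_k$ for $k \geq K$. Because only finitely many $\Phi_T$ meet $\Lambda$, the map $\pi_{\Gi,\Lambda}: \Omega \to \Me(\Omega)$ factors through a projection to finitely many coordinates and is therefore continuous; by the standard computation $\int \varphi\, d\bp_{\Gi,\Lambda}(\mu) = \int \bigl(\int \varphi\, d\pi_{\Gi,\Lambda}(\omega)\bigr) d\mu(\omega)$ for $\varphi \in C(\Omega)$, the induced kernel $\bp_{\Gi,\Lambda}$ is then weak*-to-weak* continuous on $\Me(\Omega)$. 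Passing to the limit in $k$ on both sides of $\bp_{\Gi,\Lambda}(\nu_k)=\nu_k$ yields $\bp_{\Gi,\Lambda}(\nu)=\nu$, as required.

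The argument is essentially bookkeeping once one has the weak* continuity of $\bp_{\Gi,\Lambda}$; this is what lets a relation holding for a tail of the sequence pass to the weak* limit. The only substantive point to note is that, $\Lambda$ being finite and $(F_k)$ decreasing to $F$, the set $\Lambda$ is eventually disjoint from $F_k$, so that for almost every $k$ the defining Gibbs relation we want for $\nu$ is already enforced on $\nu_k$; there is no genuine obstacle beyond recording this and invoking continuity of the kernel.
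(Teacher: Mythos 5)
Your proof is correct and follows essentially the same route as the paper's: both defining conditions of $\MG_{F,\omega}$ are closed under weak* convergence and are inherited from the $\nu_k$. You are in fact slightly more careful than the paper in noting that for $\Lambda \Subset V \setminus F$ the relation $\bp_{\Gi,\Lambda}(\nu_k)=\nu_k$ is only guaranteed for a tail of the sequence (once $\Lambda$ is disjoint from $F_k$), which of course suffices for passing to the limit.
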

\begin{proof}
Let us denote $\nu$ the limiting measure.
It easy to see that $\nu(\lbrace t \in \Omega \vert t(v)=\omega(v)\rbrace)=1$ for $v \in F$, since the latter condition is closed and is satisfied for $\nu_k$, $k\in\N$. We also note that $\bp_{\Gi,\Lambda}(\nu)=\nu$ for every $\Lambda \Subset V \setminus F$ by the same reason.  
\end{proof}

In order to state the {\em Markov property} we will define the boundary. Let $\Lambda$ be a finite subset of $V$. Its boundary $\partial \Lambda$ is defined as $\bigcup \lbrace T \setminus \Lambda \;\vert T : T \cap \Lambda \neq \varnothing, \Phi(T) \not\equiv 0\rbrace$, where the union is taken over all such $T \Subset V$ that $\Phi_T$ is not identically zero. Obviously, $\partial \Lambda$ is finite. If there could be ambiguity we will signify the Gibbs structure: $\partial_{\Gi}\Lambda$.

\begin{lem}
For any $\Lambda \Subset V$ we have $\pi_{\Gi,\Lambda}$ is $\sB(\partial \Lambda)$-measurable.
\end{lem}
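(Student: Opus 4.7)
The plan is to unpack the definition of $\pi_{\Gi,\Lambda}(\omega)$ and observe that all the weights that enter its construction depend on $\omega$ only through $\pr_{\partial\Lambda}(\omega)$. By construction, $\pi_{\Gi,\Lambda}(\omega)$ is supported on the fiber $\{\omega' \in \Omega : \pr_{V\setminus\Lambda}(\omega') = \pr_{V\setminus\Lambda}(\omega)\}$, and assigns to any such $\omega'$ a weight proportional to
\[
\exp\Bigl(-\sum_{T\cap\Lambda\neq\varnothing}\Phi_T(\omega')\Bigr).
\]

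The key observation is that, thanks to the standing hypothesis that only finitely many $\Phi_T$ with $T\ni v$ are nonzero for each $v$, together with the finiteness of $\Lambda$, the sum above is in fact a finite sum of $T$'s satisfying $T\cap\Lambda\neq\varnothing$ and $\Phi_T\not\equiv 0$. By the very definition of the boundary, each such $T$ satisfies $T\setminus\Lambda \subseteq \partial\Lambda$, i.e.\ $T\subseteq \Lambda\cup\partial\Lambda$. Hence $\Phi_T(\omega')$ depends on $\omega'$ only via $\pr_{\Lambda\cup\partial\Lambda}(\omega')$. Since $\omega'$ is constrained to coincide with $\omega$ off $\Lambda$, one has $\pr_{\partial\Lambda}(\omega') = \pr_{\partial\Lambda}(\omega)$, so the entire collection of unnormalized weights, and therefore the normalizing partition function as well, is a function of $\pr_{\partial\Lambda}(\omega)$ and of the free $\Lambda$-coordinates alone.

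From this, for every cylinder event of the form $E=\pr_\Lambda^{-1}(E_0)$ with $E_0 \subseteq \prod_{v\in\Lambda}A_v$ the function $\omega \mapsto \pi_{\Gi,\Lambda}(\omega)(E)$ is a finite sum of ratios of exponentials in $\pr_{\partial\Lambda}(\omega)$ and hence $\sB(\partial\Lambda)$-measurable; a routine monotone-class argument then extends the $\sB(\partial\Lambda)$-measurability to arbitrary Borel test sets that specify only $\Lambda$-coordinates, which is exactly the content of the $\sB(\partial\Lambda)$-measurability of $\pi_{\Gi,\Lambda}$ asserted in the Markov property.

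The only real obstacle is bookkeeping: one must invoke the standing finiteness assumption on the potential at the right moment to ensure that the exponential sums are finite, so that numerator and denominator in the normalized weight are genuine Borel functions of the boundary trace, and that the definition of $\partial \Lambda$ exactly captures the set of coordinates outside $\Lambda$ that can ever enter a relevant $\Phi_T$.
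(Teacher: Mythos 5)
Your argument is correct, and it is the intended one: the paper states this lemma without proof, treating it as immediate from the definitions, and your write-up (finiteness of the relevant family of $T$'s from the standing hypothesis on the potential, the inclusion $T \subseteq \Lambda \cup \partial\Lambda$ for every contributing $T$, and the observation that the normalizing constant is likewise a function of the boundary trace) is exactly the computation being elided. You also correctly handle the one genuine subtlety, namely that the map $\omega \mapsto \pi_{\Gi,\Lambda}(\omega) \in \Me(\Omega)$ is \emph{not} literally constant on fibers of $\pr_{\partial\Lambda}$ (the support condition pins down all of $\pr_{V\setminus\Lambda}(\omega)$), so the lemma must be read as $\sB(\partial\Lambda)$-measurability of $\omega \mapsto \pi_{\Gi,\Lambda}(\omega)(E)$ for events $E \in \sB(\Lambda)$ --- which is the reading the paper itself relies on in the remark following the lemma and in Lemma \ref{Markov}.
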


So, for $\Lambda \Subset V$ we can naturally define $\pi_{\Gi,\Lambda}: \prod_{v \in W}A_v \to \Me(\prod_{v \in W}A_v)$ for any $W \subset V$ suсh that $W \supset \Lambda \cup \partial \Lambda$. Also we can define $\bp_{\Gi,\Lambda}$ on $\Me(\prod_{v \in W}A_v)$.
Another (and more useful) way to formulate Mfrkov property is the following.
\begin{lem}\label{Markov}
Let $(X_v)_{v \in V}$ be a collection of random varibles, distributed according to some Gibbs measure for $\Gi$. Then for any $\Lambda \Subset V$ we have that the distribution of $(X_v)_{v \in \Lambda}$ relative to $(X_v)_{v \in V \setminus \Lambda}$ is \mz the same as the distribution of $(X_v)_{v \in \Lambda}$ relative to $(X_v)_{v \in \partial \Lambda}$.
\end{lem}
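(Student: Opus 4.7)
The plan is to observe that both conditional distributions appearing in the statement coincide, $\nu$-almost surely, with the explicit formula $\pr_\Lambda(\pi_{\Gi,\Lambda}(\omega))$, which by the preceding lemma depends only on $\pr_{\partial\Lambda}(\omega)$. The two ingredients I would combine are the disintegration characterization of Gibbs measures (the lemma following the definition of $\MG$, which follows from lemma \ref{special kernel decomp}) and the $\sB(\partial\Lambda)$-measurability of $\pi_{\Gi,\Lambda}$ just established.

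First I would rephrase the assertion: the conditional distribution of $(X_v)_{v\in\Lambda}$ relative to $(X_v)_{v\in V\setminus\Lambda}$ is, by definition, $\pr_\Lambda(\nu\vert_\omega^{\sB(V\setminus\Lambda)})$, and the one relative to $(X_v)_{v\in\partial\Lambda}$ is $\pr_\Lambda(\nu\vert_\omega^{\sB(\partial\Lambda)})$. Since $\nu$ is Gibbs, the disintegration lemma gives $\nu\vert_\omega^{\sB(V\setminus\Lambda)}=\pi_{\Gi,\Lambda}(\omega)$ for $\nu$-a.e.\ $\omega$, and the measurability lemma tells us that this is in fact $\sB(\partial\Lambda)$-measurable as a function of $\omega$; equivalently, $\pr_\Lambda(\pi_{\Gi,\Lambda}(\omega'))=\pr_\Lambda(\pi_{\Gi,\Lambda}(\omega))$ whenever $\pr_{\partial\Lambda}(\omega')=\pr_{\partial\Lambda}(\omega)$.

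Second, because $\sB(\partial\Lambda)\subset\sB(V\setminus\Lambda)$ and both are countably generated, the transitivity recalled in the preliminaries ($\mu\vert_x^{\sA}\vert_x^{\sB}=\mu\vert_x^{\sB}$ when $\sA\subset\sB$) yields
\[
\nu\vert_\omega^{\sB(\partial\Lambda)}=\int \nu\vert_{\omega'}^{\sB(V\setminus\Lambda)}\, d\nu\vert_\omega^{\sB(\partial\Lambda)}(\omega')
\]
for $\nu$-a.e.\ $\omega$. Pushing forward under $\pr_\Lambda$ and substituting $\nu\vert_{\omega'}^{\sB(V\setminus\Lambda)}=\pi_{\Gi,\Lambda}(\omega')$, the integrand is $\nu\vert_\omega^{\sB(\partial\Lambda)}$-a.s.\ constantly equal to $\pr_\Lambda(\pi_{\Gi,\Lambda}(\omega))$, since the conditional measure on the right is supported on the $\sB(\partial\Lambda)$-atom of $\omega$ and $\pr_\Lambda\circ\pi_{\Gi,\Lambda}$ is constant on that atom. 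Hence the integral collapses to $\pr_\Lambda(\pi_{\Gi,\Lambda}(\omega))$, which is exactly $\pr_\Lambda(\nu\vert_\omega^{\sB(V\setminus\Lambda)})$, proving the equality.

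There is no real obstacle beyond the careful bookkeeping of conditional measures; the only point to double-check is that $\pr_\Lambda(\pi_{\Gi,\Lambda}(\omega))$ depends only on $\pr_{\partial\Lambda}(\omega)$, which is exactly the content of the preceding lemma and reflects the definition of $\partial\Lambda$: every $T$ with $T\cap\Lambda\neq\varnothing$ and $\Phi_T\not\equiv 0$ is contained in $\Lambda\cup\partial\Lambda$, so the weights $\exp(-\sum_{T\cap\Lambda\neq\varnothing}\Phi_T)$ defining $\pi_{\Gi,\Lambda}$ use no coordinates of $\omega$ outside $\partial\Lambda$.
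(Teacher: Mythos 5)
Your proof is correct and is exactly the argument the paper intends: the paper states this lemma without proof, but the two ingredients you combine --- the disintegration characterization $\nu\vert_\omega^{\sB(V\setminus\Lambda)}=\pi_{\Gi,\Lambda}(\omega)$ of Gibbs measures and the $\sB(\partial\Lambda)$-measurability of $\pr_\Lambda\circ\pi_{\Gi,\Lambda}$ --- are precisely the two lemmas placed immediately before it for this purpose. The tower-property computation collapsing the integral over the $\sB(\partial\Lambda)$-atom is the standard way to finish, and your closing remark correctly identifies why the weights only involve coordinates in $\Lambda\cup\partial\Lambda$.
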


Let $W$ be a subset of $V$.
We will say that the probability measure $\nu$ on $\prod_{v \in W}A_v$ is $(\Gi,\Lambda)$-admissible if $\bp_{\Gi,\Lambda}(\nu)=\nu$. 
The fact that measure $\nu$ is Gibbs measure is locally testable.
\begin{lem}
Measure $\nu$ is Gibbs measure for Gibbs structure $\Gi$ iff for every pair $\Lambda, W \Subset V$ with $\Lambda \cup \partial \Lambda \subset W$ we have $\bp_{\Gi,\Lambda}(\pr_{W}(\nu))=\pr_{W}(\nu)$.
\end{lem}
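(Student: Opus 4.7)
The lemma is a straightforward consequence of the preceding observation that $\pi_{\Gi,\Lambda}$ is $\sB(\partial\Lambda)$-measurable, so the kernel $\bp_{\Gi,\Lambda}$ makes sense on $\Me\!\left(\prod_{v \in W} A_v\right)$ for any $W \supset \Lambda \cup \partial\Lambda$, and the diagram
\[
\Me(\Omega) \xrightarrow{\bp_{\Gi,\Lambda}} \Me(\Omega), \quad \pr_W: \Me(\Omega) \to \Me\!\left(\prod_{v \in W}A_v\right)
\]
commutes: $\pr_W \circ \bp_{\Gi,\Lambda} = \bp_{\Gi,\Lambda} \circ \pr_W$. My plan is first to verify this commutation and then to use it for both directions.

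For the commutation, I would unwind definitions. Fix $\omega \in \Omega$. The measure $\bp_{\Gi,\Lambda}(\delta_\omega) = \pi_{\Gi,\Lambda}(\omega)$ is supported on configurations agreeing with $\omega$ off $\Lambda$, so its pushforward under $\pr_W$ is supported on elements of $\prod_{v \in W}A_v$ which agree with $\pr_W(\omega)$ off $\Lambda$. Since the weights $\exp\!\bigl(-\sum_{T \cap \Lambda \neq \varnothing}\Phi_T\bigr)$ depend only on coordinates in $\Lambda \cup \partial\Lambda \subset W$, the pushforward $\pr_W(\pi_{\Gi,\Lambda}(\omega))$ agrees with the $W$-level kernel applied to $\pr_W(\omega)$. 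Integrating against $\nu$ and using affinity, this gives $\pr_W(\bp_{\Gi,\Lambda}(\nu)) = \bp_{\Gi,\Lambda}(\pr_W(\nu))$ for any $\nu \in \Me(\Omega)$.

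For the forward direction, if $\nu \in \MG$, then for any $\Lambda, W$ with $\Lambda \cup \partial\Lambda \subset W$, the commutation yields $\bp_{\Gi,\Lambda}(\pr_W(\nu)) = \pr_W(\bp_{\Gi,\Lambda}(\nu)) = \pr_W(\nu)$. For the backward direction, fix any $\Lambda \Subset V$ and any cylinder set $C \in \sB(W')$ with $W' \Subset V$. Enlarge $W' $ to $W := W' \cup \Lambda \cup \partial\Lambda$, so that $W \Subset V$ and $\Lambda \cup \partial\Lambda \subset W$. By hypothesis, $\bp_{\Gi,\Lambda}(\pr_W(\nu)) = \pr_W(\nu)$, and by the commutation again the left-hand side equals $\pr_W(\bp_{\Gi,\Lambda}(\nu))$. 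Since $C$ is a cylinder based on $W' \subset W$, this yields $\bp_{\Gi,\Lambda}(\nu)(C) = \nu(C)$. Cylinder sets generate the Borel $\sigma$-algebra on $\Omega$, so $\bp_{\Gi,\Lambda}(\nu) = \nu$; as $\Lambda$ was arbitrary, $\nu \in \MG$.

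The only genuinely nontrivial step is the commutation $\pr_W \circ \bp_{\Gi,\Lambda} = \bp_{\Gi,\Lambda} \circ \pr_W$, and even this is immediate from the $\sB(\partial\Lambda)$-measurability of $\pi_{\Gi,\Lambda}$ recorded in the previous lemma. So the argument is essentially bookkeeping, with no analytic obstacle to overcome.
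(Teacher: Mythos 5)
Your proof is correct. The paper states this lemma without any proof (it is offered as an evident ``local testability'' remark immediately after the observation that $\pi_{\Gi,\Lambda}$ is $\sB(\partial\Lambda)$-measurable), and your argument --- establishing the commutation $\pr_W \circ \bp_{\Gi,\Lambda} = \bp_{\Gi,\Lambda} \circ \pr_W$ from the fact that the weights depend only on coordinates in $\Lambda \cup \partial\Lambda$, then deducing the backward direction by enlarging the base of an arbitrary cylinder set --- is exactly the bookkeeping the author is implicitly relying on.
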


\begin{lem}\label{approximation of gibbs measure}
Suppose Gibbs structure $\Gi$ has unique Gibbs measure $\nu$.
Suppose $S \Subset V$. Let us consider any metric compatiable with weak* topology on $\Me(\prod_{v \in S}A_v)$. Then for any $\varepsilon>0$ we can find such a pair $W,\Lambda \Subset V$, $\Lambda \cup \partial \Lambda \subset W$, that for every $(\Gi,\Lambda)$-admissible measure $\mu$ on $\prod_{v \in W}A_v$ we have $\pr_{S}(\mu)$ is $\varepsilon$-close to $\pr_S(\nu)$.
\end{lem}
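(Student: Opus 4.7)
The plan is a standard compactness argument: assume the conclusion fails and extract a second Gibbs measure as a weak* limit, contradicting uniqueness.

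Suppose the conclusion fails for some $\varepsilon > 0$. Choose an exhaustion $\Lambda_1 \subset \Lambda_2 \subset \cdots$ of $V$ by finite sets with $\bigcup_n \Lambda_n = V$, and set $W_n = \Lambda_n \cup \partial\Lambda_n$ (or any finite superset). By the assumed failure, for each $n$ there exists a $(\Gi,\Lambda_n)$-admissible measure $\mu_n$ on $\prod_{v\in W_n}A_v$ with $d(\pr_S(\mu_n),\pr_S(\nu))\geq\varepsilon$. Pick an arbitrary Borel extension $\tilde\mu_n \in \Me(\Omega)$ with $\pr_{W_n}(\tilde\mu_n)=\mu_n$ (e.g.\ product with any fixed measure on $\prod_{v\notin W_n}A_v$). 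Since $\Omega$ is a compact metrizable space, $\Me(\Omega)$ is weak* compact, so we may pass to a subsequence converging to some $\mu^*\in\Me(\Omega)$.

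The key step is to verify that $\mu^*$ is a Gibbs measure via the local testability lemma stated just above. Fix any $\Lambda,W \Subset V$ with $\Lambda\cup\partial\Lambda\subset W$. For all sufficiently large $n$, we have $\Lambda\subset\Lambda_n$ and $W\subset W_n$. Using the relation $\bp_{\Gi,\Lambda}\circ\bp_{\Gi,\Lambda_n}=\bp_{\Gi,\Lambda_n}$ (recorded in the paragraph defining $\bp_{\Gi,\Lambda}$), the $(\Gi,\Lambda_n)$-admissibility of $\mu_n$ automatically upgrades to $(\Gi,\Lambda)$-admissibility of $\mu_n$. Because $\bp_{\Gi,\Lambda}$ only alters coordinates in $\Lambda\subset W$ and reads only coordinates in $\Lambda\cup\partial\Lambda\subset W$, it commutes with the further projection $\pr_W$, giving
\[
\bp_{\Gi,\Lambda}(\pr_W(\tilde\mu_n)) \;=\; \bp_{\Gi,\Lambda}(\pr_W(\mu_n)) \;=\; \pr_W(\mu_n) \;=\; \pr_W(\tilde\mu_n).
\]
Taking $n\to\infty$, weak* convergence $\tilde\mu_n\to\mu^*$ yields $\pr_W(\tilde\mu_n)\to\pr_W(\mu^*)$, and continuity of the probability kernel $\bp_{\Gi,\Lambda}$ then gives $\bp_{\Gi,\Lambda}(\pr_W(\mu^*))=\pr_W(\mu^*)$. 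Applying this for every such $(\Lambda,W)$ and invoking the local testability lemma, $\mu^*$ is a Gibbs measure.

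By the uniqueness assumption $\mu^*=\nu$. In particular $\pr_S(\tilde\mu_n)\to\pr_S(\nu)$ weak*, and since $\pr_S(\tilde\mu_n)=\pr_S(\mu_n)$ for $n$ large enough that $S\subset W_n$, this contradicts $d(\pr_S(\mu_n),\pr_S(\nu))\geq\varepsilon$. The only mildly delicate point is the commutation of $\bp_{\Gi,\Lambda}$ with projection and the upgrading of admissibility from $\Lambda_n$ to $\Lambda$; both are immediate from the locality of the kernel and the composition identity for the $\bp_{\Gi,\cdot}$, so the proof is essentially a clean diagonal/compactness argument.
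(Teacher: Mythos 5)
Your proposal is correct and follows essentially the same route as the paper: negate the statement, extract from the bad admissible measures a weak* limit (the paper does this via a diagonal argument on finite-window projections, you via arbitrary extensions to $\Omega$ and compactness of $\Me(\Omega)$ — these are equivalent), verify the limit is a Gibbs measure, and contradict uniqueness. Your write-up is in fact more careful than the paper's, which dismisses the verification that the limit is Gibbs as "easy to see" where you supply the admissibility-upgrading and kernel/projection commutation explicitly.
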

\begin{proof}
Suppose the contrary. Let us pick a growing series $( \Lambda_i )_{i \in \N}$ and $( W_i )_{i \in \N}$ of subsets of $V$ with $\Lambda_i \partial_{\Gi}\Lambda \subset W$,  such that $S$-projections of $(\Gi,\Lambda_i)$-admissible measures $\mu_i$($\mu_i \in \Me(\prod_{w \in W_i}A_i)$) would not be $\varepsilon$-close to $S$-projection of $\nu$. Then we refine a subsequence such that its $R$-projections for every finite subsets $R$ are converging, it is easy to see that this projections would be compatiable and so, they will define a Gibbs measure on $\Omega$. But its $S$-projection would be $\varepsilon$-apart from $\pr(\nu)$, a contradiction.
\end{proof}

For subalgebra $\sA$ on $\Omega$ let us define its saturation to be $$\widetilde{\sA}=\bigcap_{F \Subset V}\sA \vee \sB(V \setminus F).$$


\begin{lem}\label{saturation general}
Let $\Gi$ be a Gibbs structure and $\nu$ --- its Gibbs measure. Let $S$ be a subset of $V$. Suppose that for $\nu$ - almost every $\omega \in \Omega$ we have that $\Gi_{S,\omega}$ has unique Gibbs measure.  
Then subalgebras $\sB(S)$ and $\widetilde{\sB(S)}$ are $\nu$-\mz equivalent. 
\end{lem}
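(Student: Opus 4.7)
The plan is to show that for $\nu$-almost every $\omega$, the conditional measure $\nu\vert_\omega^{\widetilde{\sB(S)}}$ lies in $\MG_{S,\omega}$. Combined with Lemma \ref{gibbs decomp}, which gives $\nu\vert_\omega^{\sB(S)} \in \MG_{S,\omega}$, and the singleton hypothesis on $\MG_{S,\omega}$, this forces $\nu\vert_\omega^{\widetilde{\sB(S)}} = \nu\vert_\omega^{\sB(S)}$ almost surely, which is exactly the \mz equivalence claimed.

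The preparatory step is to rewrite the saturation in a usable form. Using $\sB(S) \vee \sB(V \setminus F) = \sB(S \cup (V \setminus F)) = \sB(V \setminus (F \setminus S))$ and the substitution $F' = F \setminus S$, one obtains
\[
\widetilde{\sB(S)} \;=\; \bigcap_{F' \Subset V \setminus S} \sB(V \setminus F').
\]
In particular $\widetilde{\sB(S)} \subset \sB(V \setminus \Lambda)$ for every $\Lambda \Subset V \setminus S$, while $\sB(S) \subset \widetilde{\sB(S)}$ is obvious. The first inclusion, together with the tower rule $\mu\vert_x^{\sA}\vert_x^{\sB} = \mu\vert_x^{\sB}$ for $\sA \subset \sB$, collapses $(\nu\vert_\omega^{\widetilde{\sB(S)}})\vert_{\omega'}^{\sB(V \setminus \Lambda)}$ to $\nu\vert_{\omega'}^{\sB(V \setminus \Lambda)}$, which by the Gibbs property of $\nu$ equals $\pi_{\Gi,\Lambda}(\omega')$ for $\nu$-a.e. $\omega'$. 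Since $\int \nu\vert_\omega^{\widetilde{\sB(S)}}\,d\nu(\omega) = \nu$, this exceptional $\nu$-null set is $\nu\vert_\omega^{\widetilde{\sB(S)}}$-null for $\nu$-a.e. $\omega$. Plugging this into Lemma \ref{special kernel decomp} (applied with the horizontal algebra $\sB(V \setminus \Lambda)$) gives $\bp_{\Gi,\Lambda}(\nu\vert_\omega^{\widetilde{\sB(S)}}) = \nu\vert_\omega^{\widetilde{\sB(S)}}$ for every $\Lambda \Subset V \setminus S$, which is precisely the invariance axiom for $\Gi_{S,\omega}$. The support condition on $S$-coordinates is inherited from $\sB(S) \subset \widetilde{\sB(S)}$ (the conditional atom on the finer algebra lies inside the coarser atom), so indeed $\nu\vert_\omega^{\widetilde{\sB(S)}} \in \MG_{S,\omega}$.

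The only delicate point is the algebraic simplification of the saturation and confirming that Lemma \ref{special kernel decomp} genuinely applies to $\bp_{\Gi,\Lambda}$ in the product decomposition $\Omega = \prod_{v \notin \Lambda}A_v \times \prod_{v \in \Lambda}A_v$; once these are in place, the remainder is a direct tower-of-conditionals manipulation together with uniqueness of $\MG_{S,\omega}$. I do not foresee any substantial obstacle beyond bookkeeping of $\nu$-null sets under iterated disintegration.
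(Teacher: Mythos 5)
Your proposal is correct, and it shares the paper's overall skeleton --- show that both $\nu\vert_\omega^{\sB(S)}$ and $\nu\vert_\omega^{\widetilde{\sB(S)}}$ lie in $\MG_{S,\omega}$ and then invoke the uniqueness hypothesis --- but the way you place the saturated conditional measure inside $\MG_{S,\omega}$ is genuinely different. The paper writes $\widetilde{\sB(S)}=\bigcap_i \sB(S\cup F_i)$ for a decreasing sequence of co-finite sets $F_i$, invokes the (reverse) measure-valued martingale convergence theorem to get $\nu\vert_\omega^{\sB(S\cup F_i)}\to\nu\vert_\omega^{\widetilde{\sB(S)}}$ weakly, and then uses its lemma that weak limits of measures in $\MG_{F_k,\omega}$ with $F_k$ decreasing land in $\MG_{\bigcap F_k,\omega}$. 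You instead rewrite the saturation as $\bigcap_{\Lambda\Subset V\setminus S}\sB(V\setminus\Lambda)$ and verify the DLR equations for $\nu\vert_\omega^{\widetilde{\sB(S)}}$ directly, by iterated disintegration through each $\sB(V\setminus\Lambda)$ together with Lemma \ref{special kernel decomp}; this avoids both the martingale convergence theorem and the weak-limit lemma, at the cost of the null-set bookkeeping you flag. Both routes are sound. If you write yours up, the two points worth spelling out are (i) the a.e.-in-$\omega'$ form of the tower rule for iterated disintegration (the preliminaries state it only at a single point $x$, and you need it along $\nu\vert_\omega^{\widetilde{\sB(S)}}$-typical $\omega'$), and (ii) that since $V$ is countable there are only countably many $\Lambda\Subset V\setminus S$, so the exceptional $\nu$-null sets can be intersected to get the DLR condition for all $\Lambda$ simultaneously.
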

\begin{proof}
Let $\sA=\sB(S)$.
Let us fix some vanishing nested  sequence of co-finite sets $\lbrace F_i\rbrace_{i \in \N}$, $F_i \subset F_j$ for $i> j$, $\bigcap_{i \in \N}F_i = \varnothing$, $V\setminus F_i \Subset V$ for $i \in \N$. It is easy to see that $\widetilde{\sA}=\bigcap_{i \in \N}\sA \vee \sB(F_i) = \bigcap_{i \in \N} \sB(S \cup F_i)$. Let $\sA_i = \sB(S \cup F_i)$.
For $\nu$-a.e. $\omega$ we have the following: $\nu \vert_{\omega}^{\sA_i} \to \nu \vert_{\omega}^{\widetilde{\sA}}$(by the measure-valued martingale convergence theorem, see \cite{EW11}, corollary 5.21, p. 144); for all $i \in \N $ and for a.e. $\omega \in \Omega$ holds $\nu\vert_{\omega}^{\sA_i} \in \MG_{S \cup F_i, \omega}$ and hence, we have that 
$\nu\vert_{\omega}^{\widetilde{\sA}} \in \MG_{S,\omega}$. On the other hand, we have that $\nu\vert_{\omega}^{\sA} \in \MG_{S,\omega}$ for $\nu$--- a.e. $\omega \in \Omega$. Since for $\nu$ --- a.e. $\MG_{S,\omega}$ is a one-point set, we have that $\nu\vert_{\omega}^{\widetilde{\sA}}=\nu\vert_{\omega}^{\sA}$ for $\nu$ - a.e. $\omega \in \Omega$. This implies that $\sA = \widetilde{\sA}$ modulo $\nu$. 
\end{proof}

\subsection{Dobrushin uniqueness condition}\label{sec:dobrushin}
To state Dobrushin uniqueness condition we will need some notation. At first let us remind that the {\em total variation} norm $\lVert \mu \rVert$ of measure (not necessarily probabilistic) $\mu$ is defined to be $\sup \lbrace \mu(B) - \mu(C) \rbrace$ (where the supremum is taken along all the pairs $B,C$ of measurable subsets).
Let 
\[
b_{v,u}=\sup_{\omega_1,\omega_2}\lVert pr_{\lbrace v\rbrace}(\pi_{\Gi,\lbrace v \rbrace})(\omega_1) - pr_{\lbrace v\rbrace}(\pi_{\Gi,\lbrace v \rbrace})(\omega_2)\rVert,
\]
where the supremum is taken along all the pairs $\omega_1,\omega_2 \in \Omega$ such that $\omega_1(w)=\omega_2(w)$ for all $w\neq v$, $w\in V$. Then let $$b_v=\sum_{u \neq v}b_{v,u}.$$ 
And finally let 
\[
b^{\star}=\sup_{v \in V} b_v.
\] 
We will say that Gibbs structure $\Gi$ satisfies {\em Dobrushin condition} if $b^{\star} < 1$.

\begin{theor}[Dobrushin, \cite{D68}]
If $\Gi$ satisfies Dobrushin condition then $\MG$ contains unique measure.
\end{theor}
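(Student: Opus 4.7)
The plan is to follow Dobrushin's classical contraction argument. Let $\nu_1, \nu_2 \in \MG$ be Gibbs measures and let $f: \Omega \to \R$ be a bounded Borel function depending on only finitely many coordinates. Since such cylinder functions separate Borel probability measures on $\Omega$, it will suffice to prove $\nu_1(f) = \nu_2(f)$.

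For a bounded Borel $g: \Omega \to \R$ and $v \in V$ set
\[
\delta_v(g) = \sup \lbrace |g(\omega)-g(\omega')| : \omega_u = \omega'_u \text{ for every } u \neq v \rbrace
\]
and $\lVert g \rVert_D = \sum_{v \in V}\delta_v(g)$; every cylinder function has finite Dobrushin seminorm. Consider the single-site Gibbs sampler $T_v g(\omega) = \int g\, d\pi_{\Gi,\lbrace v\rbrace}(\omega)$. Two elementary bounds drive the whole proof: $\delta_v(T_v g) = 0$, because $T_v g$ has no dependence on $\omega_v$; and for $u \neq v$
\[
\delta_u(T_v g) \leq \delta_u(g) + b_{v,u}\,\delta_v(g).
\]
The second bound is obtained by splitting $T_v g(\omega_1) - T_v g(\omega_2)$, for $\omega_1, \omega_2$ differing only at $u$, into the variation of $g$ at $u$ with the sampler distribution held fixed (contributing $\delta_u(g)$) and the variation of $\pi_{\Gi,\lbrace v\rbrace}$ as $u$ changes, which contributes at most $b_{v,u}\delta_v(g)$ by the very definition of the Dobrushin coefficient.

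Now iterate along a schedule $(v_k)_{k \geq 1}$ that cycles through a growing finite exhaustion of $V$, and set $f_n = T_{v_n}\cdots T_{v_1}f$. Since each $\nu_i$ is $\bp_{\Gi,\lbrace v\rbrace}$-invariant, $\nu_i(f_n) = \nu_i(f)$ for every $n$. The two variation bounds combine into an $\ell^1$-contraction of the vector $(\delta_v(\cdot))_v$ driven by the sub-stochastic matrix $(b_{v,u})$: every update zeroes out one coordinate and redistributes its mass to the remaining coordinates with total weight at most $b^{\star} < 1$, so a full sweep contracts $\lVert \cdot \rVert_D$ by at least the factor $b^{\star}$. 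Hence $\lVert f_n \rVert_D \to 0$. For the cylinder functions $f_n$ one has $\sup f_n - \inf f_n \leq \lVert f_n \rVert_D$ by telescoping over single coordinates, so
\[
|\nu_1(f) - \nu_2(f)| = |\nu_1(f_n) - \nu_2(f_n)| \leq \sup f_n - \inf f_n \to 0,
\]
yielding $\nu_1 = \nu_2$.

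The main technical obstacle is making the "sweep contracts by a factor $b^{\star}$" bookkeeping precise on infinite $V$, since individual updates can reload previously zeroed coordinates through the $b_{v,u}$-terms. The cleanest route is to view $(b_{v,u})$ as a sub-stochastic matrix with row sums bounded by $b^{\star}$ and recognize the iterated single-site sampler as a contraction on the Wasserstein-type metric $d(\mu_1,\mu_2) = \sup \lbrace |\mu_1(g) - \mu_2(g)| : \lVert g \rVert_D \leq 1 \rbrace$, so that uniqueness reduces to a Banach-type fixed-point statement on the (compact) space $\Me(\Omega)$.
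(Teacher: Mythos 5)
The paper does not actually prove this theorem: it is quoted as a classical result with a citation to Dobrushin's 1968 paper, so there is no internal argument to compare yours against. Your reconstruction is the standard single-site contraction proof, and the two one-step oscillation bounds you isolate --- $\delta_v(T_vg)=0$ and $\delta_u(T_vg)\le\delta_u(g)+b_{v,u}\,\delta_v(g)$ for $u\ne v$ --- are exactly the right engine and are correctly justified. (You are also implicitly correcting a typo in the paper's definition of $b_{v,u}$: the supremum there should run over pairs $\omega_1,\omega_2$ agreeing off $u$, not off $v$; as printed the coefficient would be identically zero.)

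The one genuine gap is the step you yourself flag: the assertion that a full sweep contracts $\lVert\cdot\rVert_D$ by the factor $b^{\star}$. You leave it unproved and then propose to sidestep it via a Banach fixed-point argument in a Wasserstein-type metric; that sidestep does not work as stated, because the Gibbs measures are common fixed points of an infinite family of kernels rather than of a single self-map, and compactness of $\Me(\Omega)$ plays no role in a Banach-type argument. Fortunately the sweep contraction is true and has a short proof you should substitute. The update at $v$ acts linearly and positively on the oscillation vector $(\delta_w(g))_{w}$: it sends $e_v\mapsto\sum_{u\ne v}b_{v,u}e_u$ (total mass at most $b^{\star}$) and fixes $e_w$ for $w\ne v$, and every update is non-increasing for the $\ell^1$-norm. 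Decompose the oscillation vector at the start of a sweep as $\sum_w\delta_w(f)e_w$. The summand at $w$ is untouched by updates at other sites (they only redistribute their own mass), so it survives intact until $T_w$ is applied, at which moment its total mass drops to at most $b^{\star}\delta_w(f)$ and never increases afterwards. Hence if the sweep visits every site in the (finite) support of the oscillation vector at the start of the sweep, the $\ell^1$-norm is multiplied by at most $b^{\star}$; the ``reloading'' you worry about causes no loss because reloaded mass has already paid a factor $b_{v,u}$. Since cylinder functions remain cylinder under $T_v$ (the support grows only by $\partial\lbrace v\rbrace$), a schedule of sweeps over a growing exhaustion gives $\lVert f_n\rVert_D\le(b^{\star})^{n}\lVert f\rVert_D\to0$, and the rest of your argument then goes through as written.
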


The following could be proved by easy check.
\begin{lem}\label{dobrushin good}
Suppose $\Gi$ is a Gibbs structure satisfying  Dobrushin condition, $\omega \in \Omega$ and $F$ is some subset of $G$. Then $\Gi_{F, \omega}$ satisfies  Dobrushin condition also and hence have unique Gibbs measure .
\end{lem}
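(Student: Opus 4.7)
The plan is to show directly that each Dobrushin coefficient of $\Gi_{F,\omega}$ is dominated by the corresponding coefficient of $\Gi$, so the Dobrushin condition is inherited; uniqueness then follows from Dobrushin's theorem stated just above.

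First I would unpack the definition of $\Gi_{F,\omega}$. Its vertex set and potential functions $\Phi_T$ are the same as those of $\Gi$; only the alphabets change, with $A_v' = A_v$ for $v \notin F$ and $A_v' = \{\omega_v\}$ for $v \in F$. Consequently the configuration space $\Omega' = \prod_{v \in V} A_v'$ is a closed subset of $\Omega$, and for every singleton $\{v\} \Subset V$ the probability kernel $\pi_{\Gi_{F,\omega},\{v\}}$ restricted to $\Omega'$ is obtained from $\pi_{\Gi,\{v\}}$ by the following rule: if $v \in F$, the measure $\pi_{\Gi_{F,\omega},\{v\}}(\eta)$ is forced to charge only configurations with $v$-coordinate $\omega_v$, so its $v$-marginal is $\delta_{\omega_v}$; if $v \notin F$, the conditional measure at $v$ given the rest of $\eta \in \Omega'$ is proportional to $\exp(-\sum_{T \ni v}\Phi_T(\cdot))$ exactly as in $\Gi$, since $\eta \in \Omega'$ already has the correct values on $F$.

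Then I would bound the Dobrushin coefficients $b_{v,u}^{\Gi_{F,\omega}}$ in three cases. If $v \in F$, the $v$-marginal of $\pi_{\Gi_{F,\omega},\{v\}}(\eta)$ is constantly $\delta_{\omega_v}$, so $b_{v,u}^{\Gi_{F,\omega}} = 0$ for every $u \neq v$. If $v \notin F$ and $u \in F$, then any two $\eta_1, \eta_2 \in \Omega'$ differing only at $u$ must in fact coincide, because $A_u' = \{\omega_u\}$; the supremum in the definition of $b_{v,u}^{\Gi_{F,\omega}}$ is thus over an empty or diagonal set and equals zero. If $v \notin F$ and $u \notin F$, the pairs over which we take the supremum for $\Gi_{F,\omega}$ are pairs in $\Omega' \subset \Omega$ that differ only at $u$, and by the observation above the $v$-marginals of $\pi_{\Gi_{F,\omega},\{v\}}(\eta_i)$ coincide with those of $\pi_{\Gi,\{v\}}(\eta_i)$. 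Taking a supremum over a smaller set yields $b_{v,u}^{\Gi_{F,\omega}} \leq b_{v,u}^{\Gi}$.

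Combining the three cases gives $b_v^{\Gi_{F,\omega}} \leq b_v^{\Gi}$ for every $v \in V$, hence $b^{\star,\Gi_{F,\omega}} \leq b^{\star,\Gi} < 1$, so $\Gi_{F,\omega}$ satisfies the Dobrushin condition. Dobrushin's theorem, stated in the preceding paragraph, then delivers uniqueness of the Gibbs measure for $\Gi_{F,\omega}$. The only subtlety is the careful bookkeeping in the third case to confirm that the conditional kernels really do agree on $\Omega'$; this is routine once one observes that the only $T$ contributing to the weights at $v \notin F$ are those with $T \ni v$, and these potentials are unchanged when passing from $\Gi$ to $\Gi_{F,\omega}$.
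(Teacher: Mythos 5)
Your proof is correct and fills in exactly the "easy check" the paper leaves unwritten: the coefficient-by-coefficient comparison $b_{v,u}^{\Gi_{F,\omega}} \leq b_{v,u}^{\Gi}$ (with the cases $v \in F$ and $u \in F$ giving zero) is the intended argument. Note only that the paper's displayed definition of $b_{v,u}$ contains a typo (the constraint should be $\omega_1(w)=\omega_2(w)$ for all $w \neq u$, not $w \neq v$); you have implicitly used the correct version.
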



\subsection{Attractive Gibbs structures}\label{sec:attractive}

In order to manipulate with attractive Gibbs structures we will need to establish some results and notation concerning stochastic domination. 

Suppose $X$ is a Polish space endowed with fixed order $\preceq$ on it. Suppose $\mu_1, \mu_2 \in \Me(X)$. We will say that measure $\mu_2$ stochastically dominates measure $\mu_1$ if there is a coupling of $\mu_1$ and $\mu_2$ whose support is a subset of $\preceq$, that is there is a measure $\mu'$ on $X \times X$ such that $\mu'(\preceq) = 1$ and $\pr_i(\mu')=\mu_i$, $i=1,2$ ($\pr_i$ mean a projection of $X \times X$ on the correspondent coordinate ). It is easy to see that if $X$ is a compact metriable space and $\preceq$ is a closed subsed of the cartesian product then the correspondent dominance order on $\Me(X)$ is a closed subset of $\Me(X) \times \Me(X)$. It is known that $\mu_1=\mu_2$ whenever $\mu_1 \preceq \mu_2$ and $\mu_2 \preceq \mu_1$.
  
Suppose for each $v \in V$ we have a partial order $\preceq$ on $A_v$. We then can establish an order on the whole space $\Omega$ (so-called Fortuin–Kasteleyn–Ginibre, FKG order) in the following way: $\omega' \preceq \omega''$ iff $\omega_v' \preceq \omega_v''$ for every $v \in V$. We will now establish a stochastic dominance order on $\Me(\Omega)$(and we will use the same sign $\preceq$).Gibbs structure $\Gi$ is said to be {\em attractive} if for every $\Lambda \Subset V$ the map $\bp_{\Gi,\Lambda}$ is monotone. It is clearly equivalent to $\pi_{\Gi, \Lambda}$ being monote for every $\Lambda \Subset V$.  

It is a well-known that we can require much less to establish attractiveness of Gibbs structures. 
\begin{prop}\label{attractive criterion}
Gibbs structure $\Gi$ is attractive if for every $v \in V$ we have that the map $\pi_{\Gi,\lbrace v \rbrace}$ is monotone.
\end{prop}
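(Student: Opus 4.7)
The plan is to realise $\bp_{\Gi,\Lambda}$ as a pointwise limit of compositions of single-site kernels, and then to transfer monotonicity through composition and weak* limits. Fix $\Lambda\Subset V$ and enumerate $\Lambda=\lbrace v_1,\ldots,v_k\rbrace$, and let $K=\bp_{\Gi,\lbrace v_k\rbrace}\circ\cdots\circ\bp_{\Gi,\lbrace v_1\rbrace}$ be the corresponding one-sweep Glauber kernel.

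The first step is to observe that monotonicity of each Borel map $\pi_{\Gi,\lbrace v\rbrace}\colon \Omega\to\Me(\Omega)$ lifts to monotonicity of the induced continuous kernel $\bp_{\Gi,\lbrace v\rbrace}$. Given $\mu_1\preceq\mu_2$ and a dominance coupling $\nu$ on $\Omega\times\Omega$, for $\nu$-a.e. $(\omega_1,\omega_2)$ the hypothesis gives a coupling of $\pi_{\Gi,\lbrace v\rbrace}(\omega_1)$ and $\pi_{\Gi,\lbrace v\rbrace}(\omega_2)$ supported on $\preceq$; integrating these couplings against $\nu$ produces a dominance coupling of $\bp_{\Gi,\lbrace v\rbrace}(\mu_1)$ and $\bp_{\Gi,\lbrace v\rbrace}(\mu_2)$. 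Composition preserves monotonicity, so $K$ and every iterate $K^n$ is monotone.

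The second and main step is to identify the limit $\lim_{n\to\infty} K^n(\delta_\omega)=\bp_{\Gi,\Lambda}(\delta_\omega)$ for every $\omega\in\Omega$. The measure $K^n(\delta_\omega)$ is supported on the finite set $F_\omega$ of configurations agreeing with $\omega$ outside $\Lambda$, on which $K$ is an honest finite-state Markov chain. It is irreducible, since any two configurations in $F_\omega$ are joined by a sequence of single-site updates whose Boltzmann weights are strictly positive, and aperiodic, since a single-site update has positive probability of leaving the coordinate unchanged. The consistency identity stated in subsection \ref{gibbs measures general}, specialised to $\bp_{\Gi,\lbrace v\rbrace}\circ\bp_{\Gi,\Lambda}=\bp_{\Gi,\Lambda}$ for $v\in\Lambda$, shows that $\bp_{\Gi,\Lambda}(\delta_\omega)$ is $K$-stationary, and the classical convergence theorem for finite ergodic Markov chains then delivers the desired limit. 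This is the only step that is not a formal manipulation, and it is where I expect the main conceptual work to sit, though it is completely standard.

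To conclude, the stochastic dominance relation $\preceq$ on $\Me(\Omega)\times\Me(\Omega)$ is closed in the weak* topology, as noted in subsection \ref{sec:attractive}; hence the class of monotone kernels is closed under pointwise weak* limits. Applying this to the monotone sequence $(K^n)$ yields monotonicity of $\bp_{\Gi,\Lambda}$ for every $\Lambda\Subset V$, which is precisely the definition of attractiveness. If one prefers to avoid invoking Markov-chain convergence, an essentially equivalent route is to construct the required coupling of $\bp_{\Gi,\Lambda}(\delta_{\omega_1})$ and $\bp_{\Gi,\Lambda}(\delta_{\omega_2})$ directly by iterating monotone single-site couplings and exploiting the same stationarity identity to identify the limit.
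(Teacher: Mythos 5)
Your argument is correct, and there is nothing in the paper to compare it against: the proposition is stated as ``well-known'' and no proof is given. Your Glauber-dynamics route is the standard one and all the pieces fit. The lifting of monotonicity from $\pi_{\Gi,\lbrace v\rbrace}$ to the kernel $\bp_{\Gi,\lbrace v\rbrace}$ is fine; the only point you gloss over is that integrating the pointwise dominance couplings against $\nu$ requires a measurable choice $(\omega_1,\omega_2)\mapsto\lambda_{(\omega_1,\omega_2)}$, but since $\pi_{\Gi,\lbrace v\rbrace}(\omega)$ splits as $\delta_{\pr_{V\setminus\lbrace v\rbrace}(\omega)}$ tensored with a distribution on the finite set $A_v$ that depends only on the finitely many coordinates in $\partial\lbrace v\rbrace$, such a selection is immediate. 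The identification of the limit is also sound: $K^n(\delta_\omega)$ lives on the finite set $F_\omega$, one full sweep already gives every configuration of $F_\omega$ positive probability (so the chain is irreducible and aperiodic), and the consistency identity $\bp_{\Gi,\lbrace v\rbrace}\circ\bp_{\Gi,\Lambda}=\bp_{\Gi,\Lambda}$ for $v\in\Lambda$, which the paper records in subsection \ref{gibbs measures general}, exhibits $\bp_{\Gi,\Lambda}(\delta_\omega)$ as the unique stationary law, hence as the limit. Passing from convergence on point masses to convergence of $K^n(\mu)$ for general $\mu$ is a one-line dominated-convergence remark worth inserting before invoking closedness of $\preceq$ on $\Me(\Omega)\times\Me(\Omega)$, but this is cosmetic. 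In short: a complete proof of a statement the paper leaves unproved.
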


Suppose that for every $v \in V$ order on $A_v$ have unique maximal element and unique minimal element. It will imply that induced order on $\Omega$ has unique maximal and unique minimal element also, and we will denote them $\omega^+$ and $\omega^-$ respectively. 

We will now say that the Gibbs structure $\Gi$ is {\em attractive} if for every $\Lambda \Subset V$. The map $\bp_{\Gi,\Lambda}$ is monotone. The latter requirement is clearly equivalent to $\pi_{\Gi, \Lambda}$ being monotone for every $\Lambda \Subset V$.
\begin{lem}
Let $\Gi$ be an attractive Gibbs structure, and let $(F_n)$ be any sequence  of finite subsets in $V$  such that $F_i \subset F_j$ for $i<j$ and $\bigcup F_i = V$.  Then limit
\[
\lim_{i \to \infty} \bp_{\Gi, F_i}(\delta_{\omega^+})
\]
exists and equals to the unique maximal Gibbs measure.
Also the limit
\[
\lim_{i \to \infty} \bp_{\Gi, F_i}(\delta_{\omega^-})
\]
exists and equals to the unique minimal Gibbs measure.
\end{lem}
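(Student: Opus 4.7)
Set $\mu_n := \bp_{\Gi, F_n}(\delta_{\omega^+})$. The cornerstone of the argument is that the sequence $(\mu_n)_{n\in\N}$ is monotonically decreasing in the stochastic dominance order. To see this, observe that since $\omega^+$ is the maximum of $\Omega$, every probability measure on $\Omega$ is dominated by $\delta_{\omega^+}$; in particular $\mu_{n+1}\preceq\delta_{\omega^+}$. Applying the monotone kernel $\bp_{\Gi,F_n}$ to both sides gives $\bp_{\Gi,F_n}(\mu_{n+1})\preceq \bp_{\Gi,F_n}(\delta_{\omega^+})=\mu_n$. Using the composition identity $\bp_{\Gi,F_n}\circ\bp_{\Gi,F_{n+1}}=\bp_{\Gi,F_{n+1}}$, valid for $F_n\subset F_{n+1}$, the left-hand side collapses to $\mu_{n+1}$ itself, yielding $\mu_{n+1}\preceq\mu_n$.

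Next I would argue that $(\mu_n)$ converges in the weak* topology. The space $\Me(\Omega)$ is compact, and since $\preceq$ is closed on $\Omega\times\Omega$ the induced dominance order is closed on $\Me(\Omega)\times\Me(\Omega)$. If $\mu$ and $\mu'$ are two subsequential limits along subsequences $(n_k)$ and $(n_\ell)$, then for each fixed $k$ one has $\mu_{n_k}\succeq \mu_{n_\ell}$ whenever $n_\ell\geq n_k$; passing to the limit first in $\ell$ and then in $k$ and using closedness of $\succeq$ yields $\mu\succeq\mu'$. By symmetry $\mu'\succeq\mu$, and antisymmetry of stochastic dominance forces $\mu=\mu'$. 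So the full sequence converges to a limit $\mu^+$.

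To see that $\mu^+$ is the unique maximal Gibbs measure, first check $\mu^+\in\MG$: fix any $\Lambda\Subset V$, note that $\Lambda\subset F_n$ for all $n$ large enough, and apply the composition identity once more to get $\bp_{\Gi,\Lambda}(\mu_n)=\bp_{\Gi,\Lambda}\circ\bp_{\Gi,F_n}(\delta_{\omega^+})=\bp_{\Gi,F_n}(\delta_{\omega^+})=\mu_n$. Since $\pi_{\Gi,\Lambda}$ is continuous, $\bp_{\Gi,\Lambda}$ is weak*-continuous on $\Me(\Omega)$, so passing to the limit yields $\bp_{\Gi,\Lambda}(\mu^+)=\mu^+$ for every $\Lambda$. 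For any $\nu\in\MG$ the relation $\nu=\bp_{\Gi,F_n}(\nu)\preceq\bp_{\Gi,F_n}(\delta_{\omega^+})=\mu_n$ (using monotonicity applied to $\nu\preceq\delta_{\omega^+}$) passes to the limit to give $\nu\preceq\mu^+$. Uniqueness of such a maximum follows from antisymmetry of $\preceq$, and in particular $\mu^+$ does not depend on the exhausting sequence $(F_n)$. The argument for $\omega^-$ is obtained by reversing every inequality.

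The only subtle point is the direction in which the composition identity and attractiveness must be combined: applying the \emph{smaller} kernel $\bp_{\Gi,F_n}$ to the trivial inequality $\mu_{n+1}\preceq\delta_{\omega^+}$ is precisely what collapses the left-hand side to $\mu_{n+1}$ while leaving $\mu_n$ on the right. Applying the larger kernel $\bp_{\Gi,F_{n+1}}$ instead would merely produce the tautology $\mu_{n+1}\preceq\mu_{n+1}$.
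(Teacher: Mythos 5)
Your proof is correct, and it differs from the paper's in one genuine respect: how convergence of the sequence $\mu_n=\bp_{\Gi,F_n}(\delta_{\omega^+})$ is obtained. The paper does not observe that $(\mu_n)$ is monotone; instead it first shows that \emph{any} subsequential limit dominates every Gibbs measure (via $\bp_{\Gi,F_i}(\delta_{\omega^+})\succeq\bp_{\Gi,F_i}(\nu)=\nu$, exactly your maximality step) and is itself a Gibbs measure, so by antisymmetry all subsequential limits coincide with the unique maximal Gibbs measure, forcing convergence by compactness. You instead prove $\mu_{n+1}\preceq\mu_n$ directly from the composition identity $\bp_{\Gi,F_n}\circ\bp_{\Gi,F_{n+1}}=\bp_{\Gi,F_{n+1}}$ together with attractiveness, and get convergence from monotonicity and closedness of $\preceq$ alone, before identifying the limit. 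Your route has the advantage that convergence is established independently of knowing the limit is Gibbs (and your remark about which kernel to apply to which side is exactly the right subtlety); the paper's route is shorter because the identification of the limit does double duty. You also spell out the verification that the limit lies in $\MG$ (eventual $(\Gi,\Lambda)$-invariance of $\mu_n$ plus weak* continuity of $\bp_{\Gi,\Lambda}$), which the paper leaves implicit. Both arguments are sound and rest on the same three ingredients: monotonicity of the kernels, the composition identity, and antisymmetry of stochastic dominance.
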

\begin{proof}
It is enough to prove the statement for maximal Gibbs measure.
At first let us prove that if the limit
\[
\lim_{i \to \infty} \bp_{\Gi, F_i}(\delta_{\omega^+})
\]
exists, then it is a Gibbs measure which is bigger than any other one. Let $\nu$ be any Gibbs measure.
We have that 
\[
\bp_{\Gi, F_i}(\delta_{\omega^+}) \succeq \bp_{\Gi, F_i}(\nu) = \nu.
\]
It is now easy to see that the limit will be bigger than $\nu$. It clearly implies now that any two converging such a sequences will have the same limit. Hence any such sequence will converge, since otherwise we can refine two converging subseqences with different limits. 
\end{proof}
Let $\Gi$ be an attractive Gibbs structure. For any $S \subset V$ we will define the map $\pmax_{\Gi,V}: \Omega \to \Me(\Omega)$, $\omega \mapsto \max\lbrace {\MG_{V\setminus S,\omega}\rbrace}$ and $\pmin_{\Gi,V}: \Omega \to \Me(\Omega)$, $\omega \mapsto \min \lbrace{\MG_{V\setminus S,\omega}\rbrace}$. We can consider also correspondent kernels $\bpmax_{\Gi,V}$ and $\bpmin_{\Gi,V}$. We note that $\Gi_{V \setminus S,\omega}$ has unique Gibbs measure iff $\pmax_{\Gi,V}(\omega) = \pmin_{\Gi,V}(\omega)$.

The following lemma is from \cite{Go80}.
\begin{lem}
If $\nu^{+}$ is a maximal Gibbs measure for attractive Gibbs structure $\Gi$ then for every $S \subset V$ we have $\bpmax_{\Gi,S}(\nu^{+})=\nu^{+}$. Analogously, for minimal Gibbs measure $\nu^{-}$ we have $\bpmin_{\Gi,S}(\nu^{-})=\nu^{-}$.  
\end{lem}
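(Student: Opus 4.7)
I will prove the statement for $\nu^+$; the claim for $\nu^-$ follows by reversing the partial order on each $A_v$ and appealing to the $\nu^+$-case for the order-reversed attractive Gibbs structure.

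Let $\mu := \bpmax_{\Gi,S}(\nu^+)$. The plan is a stochastic-domination sandwich: first show $\nu^+\preceq\mu$ in the FKG order, and then show $\mu$ is itself a Gibbs measure for $\Gi$, so that maximality of $\nu^+$ among Gibbs measures forces $\mu\preceq\nu^+$, and hence $\mu=\nu^+$.

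The first inequality is direct. By Lemma \ref{gibbs decomp}, for $\nu^+$-a.e.\ $\omega$ the conditional measure $\nu^+\vert_{\omega}^{\sB(V\setminus S)}$ lies in $\MG_{V\setminus S,\omega}$, and $\pmax_{\Gi,S}(\omega)$ is the stochastic maximum of that set by definition, so $\nu^+\vert_{\omega}^{\sB(V\setminus S)}\preceq\pmax_{\Gi,S}(\omega)$. Both measures are supported on configurations agreeing with $\omega$ off $S$, so a pointwise monotone coupling exists in $\omega$ and integrates against $\nu^+$ to a monotone coupling of $\nu^+$ and $\mu$.

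The real work is showing $\mu$ is a Gibbs measure, i.e.\ $\bp_{\Gi,\Lambda}(\mu)=\mu$ for every $\Lambda\Subset V$. The case $\Lambda\Subset S$ is immediate: on measures concentrated on configurations equal to $\omega$ off $S$ the kernels $\bp_{\Gi,\Lambda}$ and $\bp_{\Gi_{V\setminus S,\omega},\Lambda}$ coincide, and the latter fixes $\pmax_{\Gi,S}(\omega)\in\MG_{V\setminus S,\omega}$; integrating against $\nu^+$ gives $\bp_{\Gi,\Lambda}(\mu)=\mu$. The main obstacle is $\Lambda$ with $\Lambda\cap(V\setminus S)\neq\varnothing$, because $\bp_{\Gi,\Lambda}$ then stirs coordinates in $V\setminus S$ that $\pmax_{\Gi,S}(\omega)$ rigidly fixes. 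To handle this I would apply the preceding lemma inside each $\Gi_{V\setminus S,\omega}$ to obtain $\pmax_{\Gi,S}(\omega)=\lim_j\bp_{\Gi,E_j}(\delta_{\omega^*_\omega})$ for any exhaustion $E_j\nearrow S$, where $\omega^*_\omega$ equals $\omega$ off $S$ and is maximal on $S$. Letting $\widetilde{\nu^+}$ denote the pushforward of $\nu^+$ under $\omega\mapsto\omega^*_\omega$, bounded convergence in weak$^*$ yields $\mu=\lim_j\bp_{\Gi,E_j}(\widetilde{\nu^+})$. Continuity of $\bp_{\Gi,\Lambda}$, the associativity identities $\bp_{\Gi,\Lambda_1}\circ\bp_{\Gi,\Lambda_2}=\bp_{\Gi,\Lambda_2}$ for $\Lambda_1\subset\Lambda_2$, and the attractive monotonicity bound $\bp_{\Gi,E_j}(\widetilde{\nu^+})\succeq\bp_{\Gi,E_j}(\nu^+)=\nu^+$ (together with the analogous upper bound from $\widetilde{\nu^+}\preceq\delta_{\omega^+}$) should then allow one to interchange the $j$-limit with $\bp_{\Gi,\Lambda}$ and conclude $\bp_{\Gi,\Lambda}(\mu)=\mu$. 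This interchange is the genuinely delicate step: the kernels $\bp_{\Gi,\Lambda}$ and $\bp_{\Gi,E_j}$ do not commute when $\Lambda$ straddles $S$ and $V\setminus S$, and the attractive structure is used essentially to bracket the comparison by monotone bounds of the type used in the preceding lemma.
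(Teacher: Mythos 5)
Your first half is fine: deriving $\nu^+\preceq\bpmax_{\Gi,S}(\nu^+)$ from Lemma \ref{gibbs decomp} and the pointwise domination $\nu^+\vert_{\omega}^{\sB(V\setminus S)}\preceq\pmax_{\Gi,S}(\omega)$ is a valid (and slightly different) route to the same inequality the paper obtains via its approximating kernels; only the measurable selection of the couplings needs a word.

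The second half has a genuine gap, and it sits exactly where the content of the lemma lies. You reduce everything to showing that $\mu=\bpmax_{\Gi,S}(\nu^+)$ satisfies $\bp_{\Gi,\Lambda}(\mu)=\mu$ for $\Lambda\Subset V$ straddling $S$ and $V\setminus S$, you correctly flag this as the delicate step, and then you do not prove it. The tools you invoke do not suffice: the identity $\bp_{\Gi,\Lambda}\circ\bp_{\Gi,E_j}=\bp_{\Gi,E_j}$ requires $\Lambda\subset E_j$, which fails for straddling $\Lambda$ since $E_j\subset S$; and your monotone bracket does not close, because with $E_j\nearrow S$ the upper bound $\bp_{\Gi,E_j}(\delta_{\omega^+})$ converges to $\pmax_{\Gi,S}(\omega^+)$, not to $\nu^+$. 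Note that the assertion ``$\mu$ is a Gibbs measure'' is essentially the global Markov property of $\nu^+$ across the (infinite) boundary of $S$; this is not a formal consequence of the DLR equations and is known to fail without attractiveness, so it cannot be obtained by a soft limit interchange --- it is the theorem itself in disguise.

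The missing idea, which is how the paper closes the argument, is to compare against finite volumes exhausting all of $V$ rather than $S$, thereby bypassing the question of whether $\mu$ is Gibbs. Set $\bp_n(\lambda)=\bp_{\Gi,S\cap F_n}\bigl(\pr_{V\setminus S}(\lambda)\otimes\pr_S(\delta_{\omega^+})\bigr)$ with $F_n\nearrow V$; then $\bp_n(\nu^+)\to\bpmax_{\Gi,S}(\nu^+)$. On the other hand, monotonicity of $\bp_n$ together with $\nu^+=\bp_{\Gi,F_n}(\nu^+)\preceq\bp_{\Gi,F_n}(\delta_{\omega^+})$, and the fact that $\bp_{\Gi,S\cap F_n}(\delta_x)$ depends on $x$ only through $\pr_{V\setminus(S\cap F_n)}(x)$, give $\bp_n(\nu^+)\preceq\bp_n\bigl(\bp_{\Gi,F_n}(\delta_{\omega^+})\bigr)=\bp_{\Gi,F_n}(\delta_{\omega^+})$. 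Since $\bp_{\Gi,F_n}(\delta_{\omega^+})\to\nu^+$ by the preceding lemma and $\preceq$ is closed in $\Me(\Omega)\times\Me(\Omega)$, this yields $\bpmax_{\Gi,S}(\nu^+)\preceq\nu^+$ directly, which combined with your first half finishes the proof. I would restructure your second half along these lines.
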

\begin{proof}
It is enough to prove the statement for the maximal Gibbs measure. Let $(F_n)$ be a nested sequence of finite subsets of $V$ whose union equals to $V$. 
It is easy to note that the sequence of kernels $\bp_n: \Me(\Omega) \to \Me(\Omega)$, $\bp_n: \nu \mapsto \bp_{\Gi, S \cap F_n}(\pr_{V \setminus S}(\nu) \otimes \pr_{V}(\delta_{\omega^{+}}))$ converges pointwise to $\bpmax_{\Gi,S}$.
We can clearly see now that $\nu^{+} \preceq \bpmax_{\Gi,S}(\nu^{+})$. We also note that $\pi_{\Gi,F_n}(\delta_{\omega^{+}}) \succeq \pi_n(\nu^{+})$, which follows from the fact that $\bp_{\Gi,F_n}(\delta_{\omega^{+}}) \succeq \nu^{+}$ and that $\bp_n(\bp_{\Gi,F_n}(\delta_{\omega^{+}}))=\bp_{\Gi,F_n}(\delta_{\omega^{+}})$. Passing now to the limit we get $\nu^{+} \succeq \bpmax_{\Gi,S}(\nu^{+})$. This implies that $\nu^{+} = \bpmax_{\Gi,S}(\nu^{+})$.
\end{proof}

\begin{lem}\label{attractive good}
If $\nu$ is a unique Gibbs measure for attractive Gibbs strucure then for any $S \subset V$ and for $\nu$-a.e. $\omega \in \Omega$ we have that $\Gi_{S,\omega}$ has unique Gibbs measure.
\end{lem}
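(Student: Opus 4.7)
The plan is to combine the previous lemma with lemma \ref{special kernel decomp}. Since $\nu$ is the unique Gibbs measure for $\Gi$, it is simultaneously the maximal and the minimal Gibbs measure. First I would apply the preceding lemma with the set $V\setminus S$ in place of $S$ to obtain the two identities $\bpmax_{\Gi,V\setminus S}(\nu)=\nu$ and $\bpmin_{\Gi,V\setminus S}(\nu)=\nu$.

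Next I would verify that each of the kernels $\bpmax_{\Gi,V\setminus S}$ and $\bpmin_{\Gi,V\setminus S}$ satisfies the hypotheses of lemma \ref{special kernel decomp} after viewing $\Omega$ as $X\times Y$ with $X=\prod_{v\in S}A_v$ and $Y=\prod_{v\in V\setminus S}A_v$. By the definition of $\pmax_{\Gi,V\setminus S}(\omega)$, it is a Gibbs measure for $\Gi_{S,\omega}$, hence supported on configurations that agree with $\omega$ on $S$ (so it preserves $\pr_S$), and it depends only on $\omega|_S$; the same holds for $\pmin_{\Gi,V\setminus S}$. Both required conditions of lemma \ref{special kernel decomp} are thus immediate. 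Applying that lemma yields
\[
\nu\vert_\omega^{\sB(S)} \;=\; \pmax_{\Gi,V\setminus S}(\omega) \;=\; \pmin_{\Gi,V\setminus S}(\omega)
\]
for $\nu$-a.e. $\omega\in\Omega$.

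Finally, by the construction of $\pmax$ and $\pmin$, the equality on the right-hand side says that the maximal and minimal elements of $\MG_{S,\omega}$ coincide for $\nu$-a.e. $\omega$. Since $\Gi_{S,\omega}$ is attractive (freezing coordinates preserves attractiveness) and every Gibbs measure of an attractive Gibbs structure is stochastically sandwiched between its minimal and maximal Gibbs measures, this forces $\MG_{S,\omega}$ to be a singleton for $\nu$-a.e. $\omega$, as required.

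I do not anticipate a serious obstacle; the only delicate point is a bookkeeping one, namely being careful with the convention that $\pmax_{\Gi,V\setminus S}(\omega)$ returns the maximal element of $\MG_{V\setminus(V\setminus S),\omega}=\MG_{S,\omega}$, so that it is indeed the kernel $\bpmax_{\Gi,V\setminus S}$ (and not $\bpmax_{\Gi,S}$) that must be fed into lemma \ref{special kernel decomp} in order to decompose $\nu$ along $\sB(S)$.
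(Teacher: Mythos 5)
Your proposal is correct and follows essentially the same route as the paper: the paper's one-line proof likewise applies Goldstein's lemma (with $\nu$ being both the maximal and minimal Gibbs measure) to get $\nu\vert_{\omega}^{\sB(S)}=\pmax_{\Gi,V\setminus S}(\omega)=\pmin_{\Gi,V\setminus S}(\omega)$ a.e.\ and then invokes the already-noted equivalence between $\pmax=\pmin$ and uniqueness for the frozen structure. Your write-up merely makes explicit the intermediate use of lemma \ref{special kernel decomp} and the bookkeeping of subscripts, which the paper leaves implicit.
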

\begin{proof}
From the prewious lemma we have that for $\nu$ --- almost every $\omega \in \Omega$ holds $\nu\vert_{\omega}^{\sB(V )}=\bpmax_{\Gi,V \setminus S}(\omega) = \bpmin_{\Gi,V \setminus S}(\omega)$.
\end{proof}

\section{Gibbs structures and measures over sofic groups}\label{gibbs over sofic}
Let us consider shift-invariant Gibbs structure over the group $G$. Vertex set would be $G$, alphabet would be the same for every $g \in G$: $A$.
We will assume that there is a function $\varphi: A^D \to \R$($D \Subset G$) such that $\Phi_T(\tau) = \varphi(g(\tau))$, $\omega \in A^G$ if $T=Dg$ for some $g \in G$. It is not hard to see that any shift-invariant Gibbs structure could be reduced to this case without changing the set of Gibbs measures. So let us fix such a function $\varphi$ we will call it a potential. From now on we will denote this Gibbs structure $\Gi$ and we will assume that it posesses unique Gibbs measure. The latter will be abbreviated as $\varphi$ is a UGM potential.

In order to use Seward's bound we need to establish freeness of our actions:

\begin{lem}\label{gibbs free}
Let $\nu$ be shift-invariant Gibbs measure for shift-invariant Gibbs structure $\Gi$ over the countable group $G$. Then correspondent shift action is essentially free.
\end{lem}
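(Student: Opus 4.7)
The plan is to fix an arbitrary $g \in G$ with $g \neq e$ and show $\nu(E_g) = 0$, where $E_g := \{x \in A^G : gx = x\}$; essential freeness then follows by a countable union over $g \neq e$.

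The first ingredient is a uniform single-site bound. Because $\varphi$ is bounded with finite support $D \Subset G$, the single-site Gibbs kernel $\pi_{\Gi, \{v\}}(x)$, viewed as a probability measure on $A$ through the $v$-coordinate, assigns mass at least $c_0 := |A|^{-1} e^{-2 |D| \|\varphi\|_\infty}$ to every $a \in A$, uniformly in $v$ and $x$. In particular every single atom has mass at most $1 - c_0$.

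Next comes the combinatorial step. I would inductively choose points $p_1, p_2, \ldots \in G$ such that the sets $\Lambda_n := \{p_1, \ldots, p_n\}$ satisfy: (i) the $p_i$ are pairwise distinct, (ii) $\Lambda_n g \cap \Lambda_n = \varnothing$, and (iii) $p_i p_j^{-1} \notin DD^{-1}$ for $i \neq j$. At each step only finitely many positions are forbidden, so the inductive choice succeeds as $G$ is infinite. Condition (iii) guarantees that no single translate $Dh$ meets $\Lambda_n$ in more than one point; consequently the energy $\sum_{h : Dh \cap \Lambda_n \neq \varnothing}\varphi(hy)$ splits as $\sum_{i=1}^{n} \sum_{h : Dh \ni p_i}\varphi(hy)$, with the $i$-th block depending on $y|_{\Lambda_n}$ only through $y(p_i)$. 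Hence $\pi_{\Gi, \Lambda_n}(x)$ factors as the product measure $\bigotimes_{i=1}^{n}\pi_{\Gi, \{p_i\}}(x)$ on $A^{\Lambda_n}$.

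Finally, using the Gibbs disintegration $\nu|_x^{\sB(G \setminus \Lambda_n)} = \pi_{\Gi, \Lambda_n}(x)$ together with the observation that on $E_g$ one has $y(p_i) = y(p_i g) = x(p_i g)$ for each $i$ (and $p_i g \in G \setminus \Lambda_n$ by (ii)),
\[
\nu(E_g) \leq \int \pi_{\Gi,\Lambda_n}(x)\bigl(\{y : y(p_i)=x(p_i g)\text{ for all }i\leq n\}\bigr)\, d\nu(x) = \int \prod_{i=1}^{n} \pi_{\Gi,\{p_i\}}(x)\bigl(\{y(p_i)=x(p_i g)\}\bigr)\, d\nu(x) \leq (1 - c_0)^n.
\]
Since $n$ is arbitrary, $\nu(E_g) = 0$. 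The crux of the argument is the factorization step, which rests on selecting infinitely many well-separated points in the infinite group $G$; once the $p_i$'s are fixed the final estimate is a one-line application of independence inside the product kernel.
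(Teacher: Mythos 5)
Your proof is correct, and it runs on the same engine as the paper's: boundedness of the potential makes every local Gibbs kernel charge each configuration with probability bounded away from $0$ and $1$, and one then finds infinitely many disjoint test regions on which the coincidence event forced by $gx=x$ must all occur, so its probability decays geometrically. The implementations differ in a small but genuine way. The paper picks disjoint pairs $\{g_i, g_i g\}$ and telescopes, conditioning on everything outside one pair at a time via the two-site kernel $\pi_{\Gi,\{g_{i},g_{i}g\}}$; no independence between the pairs is needed, only that $B_0\cap\cdots\cap B_n$ is measurable off the $(n+1)$-st pair. You instead condition once on the whole set $\Lambda_n=\{p_1,\dots,p_n\}$ and therefore need $\pi_{\Gi,\Lambda_n}(x)$ to factorize into single-site kernels, which is exactly what your extra separation condition $p_ip_j^{-1}\notin DD^{-1}$ buys (no translate $Dh$ meets $\Lambda_n$ twice); you then compare the resampled value $y(p_i)$ with the frozen value $x(p_ig)$, legitimately since $p_ig\notin\Lambda_n$. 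Your route costs one more combinatorial constraint in choosing the points but returns the explicit bound $(1-c_0)^n$ with a concrete $c_0$; the paper's sequential version is leaner because it never needs the factorization. Both arguments, like the statement itself, implicitly use that $G$ is infinite and $\lvert A\rvert\ge 2$.
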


\begin{proof}
Let $g \in G \setminus \lbrace e \rbrace$ be an arbitrary element. It is enough to prove that $g \omega \neq \omega$ for $\nu$-a.e. $\omega \in \Omega$. Consider such a sequence $(g_i)_{i \in \N}$ that all the sets $\lbrace g_i g , g_i\rbrace$ are disjoint. We will define $B_i = \lbrace \omega \in \Omega \vert \omega_{g_i g} = \omega_{g_i} \rbrace$. From the definition of Gibbs measure it follows that there is a constant $c<1$ such that $$\nu(B_0 \cap \ldots \cap B_n \cap B_{n+1}) \leqslant c \cdot \nu (B_0 \cap \ldots \cap B_n)$$ for $n \in \N$ (we can take $c$ to be $\sup_{\omega \in \Omega}(\pi_{\Gi, \lbrace g, e\rbrace}(\omega))(B_0)$). It implies that $\nu(\bigcap_{i \in \N}B_i)$ = 0 which finishes the proof.
\end{proof}



For sofic group $G$ and potential $\varphi$ we can define a very special sequence of Gibbs structures. For $\Gi^i$ a vertex set would be $V_i$ from the definition of sofic approximation. For every vertex an alphabet set would be the same: $A$.

We will define the potential for $\Gi^i$ in the following way: $\Phi_T(\tau)=\sum_{v \in V_i}\varphi(\theta_v(\tau))$ for $\tau \in A^{V_i}$ and $T \subset V_i$ there the sum is taken over all such $v \in V_i$ that $\sigma_i^{D}(v)=T$(actually, in most instances this sum will contain either one or zero summands). Let us denote $\eta_i$ the unique Gibbs measure for Gibbs structure $\Gi^i$. 
 
In my work \cite{A15} it was proven that the sofic entropy can be computed by means of these measures.
 
\begin{theor}
Suppose $\Gi$ is a shift-invariant Gibbs structure over the sofic group $G$ with fixed sofic approximation, $\nu$ is the unique Gibbs measure for $\Gi$ and sequence $\eta_i$ is defined as above. Then sofic entropy of shift-actions on $A^G$ endowed with the measure $\nu$ can be computed by 
\[
h(\nu)=\limsup_{i \to \infty} \frac{H(\eta_i)}{\lvert V_i \rvert}.
\]
\end{theor}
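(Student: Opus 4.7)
The plan is to connect $\eta_i$ with $\Hom(i,\varepsilon)$ via the partition-function identity for the entropy of a Gibbs distribution. Setting $U_i(\tau) := \sum_{v \in V_i} \varphi(\theta_v(\tau))$, one has $\eta_i(\tau) = Z_i^{-1}\exp(-U_i(\tau))$ and
\[
H(\eta_i) = \log Z_i + \expect_{\eta_i}[U_i].
\]
Because $\varphi$ is a bounded cylinder function supported on $D \Subset G$, continuity gives a modulus $\delta(\varepsilon) \to 0$ such that $\lvert U_i(\tau)/\lvert V_i\rvert - \int\varphi\, d\nu\rvert \leq \delta(\varepsilon)$ for every $\tau \in \Hom(i,\varepsilon)$. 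Summing the bounds $\eta_i(\tau) \in Z_i^{-1}\bigl[e^{-\lvert V_i\rvert(\int\varphi\, d\nu + \delta(\varepsilon))},\, e^{-\lvert V_i\rvert(\int\varphi\, d\nu - \delta(\varepsilon))}\bigr]$ over $\tau \in \Hom(i,\varepsilon)$ produces the sandwich
\[
\log Z_i + \lvert V_i\rvert\bigl(\textstyle\int\varphi\, d\nu - \delta(\varepsilon)\bigr) + \log\eta_i(\Hom(i,\varepsilon)) \leq \log\lvert\Hom(i,\varepsilon)\rvert \leq \log Z_i + \lvert V_i\rvert\bigl(\textstyle\int\varphi\, d\nu + \delta(\varepsilon)\bigr).
\]
Both ends match $H(\eta_i) \pm \lvert V_i\rvert\delta(\varepsilon) + o(\lvert V_i\rvert)$ provided we know (i) $\expect_{\eta_i}[U_i]/\lvert V_i\rvert \to \int\varphi\, d\nu$ and (ii) $\eta_i(\Hom(i,\varepsilon)) \to 1$ for every fixed $\varepsilon > 0$.

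Both items reduce to the asymptotics of the random empirical $\mu_i(\tau) := \lvert V_i\rvert^{-1}\sum_v \delta_{\theta_v(\tau)}$ under $\eta_i$. Weak convergence in mean, $\expect_{\eta_i}[\mu_i] \to \nu$, which gives (i) by continuity of $\varphi$, is a direct consequence of lemma \ref{approximation of gibbs measure}: for any finite $S \subset G$ and any $\varepsilon' > 0$, pick $W, \Lambda \Subset G$ with $\Lambda \cup \partial\Lambda \subset W$ from that lemma; for each $W$-good $v \in V_i$ (a proportion $\to 1$ by lemma \ref{sofic is good}) the projection of $\eta_i$ to $\sigma_i^W(v)$, transported by $\theta_v$ to a measure on $A^W$, is $(\Gi,\Lambda)$-admissible, hence its $S$-marginal is within $\varepsilon'$ of $\pr_S(\nu)$; averaging over $v$ finishes the argument.

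The main obstacle is concentration (ii). I would pass to any weak* subsequential limit $\rho \in \Me(\Me(A^G))$ of the laws of $\mu_i(\tau)$ under $\eta_i$ and show $\rho = \delta_\nu$. Approximate equivariance of $\sigma_i$ at good vertices forces $\rho$-a.e.\ limit point to be shift-invariant. For the Gibbs property, one uses that for a sufficiently good $v$ the conditional law of $\tau|_{\sigma_i^\Lambda(v)}$ given the rest under $\eta_i$ matches the Gibbs specification of $\Gi$, since the local potentials of $\Gi^i$ replicate those of $\Gi$ at good vertices; applying $\bp_{\Gi,\Lambda}$ to the empirical and taking the limit gives $\bp_{\Gi,\Lambda}(\nu') = \nu'$ for $\rho$-a.e.\ $\nu'$ and every $\Lambda$, so $\rho$ is supported on $\MG = \{\nu\}$ and therefore $\rho = \delta_\nu$. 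Dividing the sandwich by $\lvert V_i\rvert$, taking $\limsup_i$, and then letting $\varepsilon \to 0$ (noting that $\limsup_i \log\lvert\Hom(i,\varepsilon)\rvert/\lvert V_i\rvert$ is monotone nondecreasing in $\varepsilon$, so its infimum equals the limit) yields $h(\nu) = \limsup_i H(\eta_i)/\lvert V_i\rvert$. The delicate point in the concentration step is carrying out the Gibbs-in-the-limit argument uniformly in $\Lambda$; this is precisely where the uniformity built into lemma \ref{approximation of gibbs measure} does the heavy lifting, reducing the remainder to book-keeping of the edge-effect and good-vertex error terms.
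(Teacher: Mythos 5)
The paper itself does not prove this theorem --- it is quoted from \cite{A15} --- so there is no internal proof to compare against; I can only assess your argument on its own terms. Your skeleton is the standard and correct one: the identity $H(\eta_i)=\log Z_i+\expect_{\eta_i}[U_i]$, the energy sandwich over $\Hom(i,\varepsilon)$, and the reduction to (i) convergence of the mean energy and (ii) $\eta_i(\Hom(i,\varepsilon))\to 1$. Your treatment of (i) via lemma \ref{approximation of gibbs measure} and lemma \ref{sofic is good} is sound, and since the upper bound $h(\nu)\le\limsup_i H(\eta_i)/\lvert V_i\rvert$ needs only (i), that direction is essentially complete.

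The gap is in (ii), which is exactly where uniqueness must do its work, and your sketch does not close it. Knowing that for each good $v$ the conditional law of $\tau$ on $\sigma_i^{\Lambda}(v)$ given the rest matches the specification of $\Gi$ is \emph{first-moment} information: it yields $\expect_{\eta_i}[\mu_i]\to\nu$, i.e.\ that the barycenter of any subsequential limit $\rho$ of the laws of $\mu_i(\tau)$ is $\nu$, but not that $\rho$ is supported on $\MG$. A priori $\rho$ could charge non-Gibbs measures averaging out to $\nu$, in which case $\eta_i(\Hom(i,\varepsilon))$ need not tend to $1$ and your lower bound collapses, since the term $\log\eta_i(\Hom(i,\varepsilon))$ can then be of order $-\lvert V_i\rvert$. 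The missing ingredient is a second-moment (decorrelation) estimate: for a fixed local test function $f$ and fixed $\Lambda\Subset G$, set $g_v(\tau)=f(\theta_v(\tau))-\int f\,d\bigl(\pi_{\Gi,\Lambda}(\theta_v(\tau))\bigr)$ and show $\expect_{\eta_i}\bigl[(\lvert V_i\rvert^{-1}\sum_v g_v)^2\bigr]\to 0$. This holds because $\expect_{\eta_i}[g_v\mid \tau|_{V_i\setminus\sigma_i^{\Lambda}(v)}]\approx 0$ for good $v$, so $\expect_{\eta_i}[g_vg_w]\approx 0$ whenever $g_w$ is measurable with respect to the configuration off $\sigma_i^{\Lambda}(v)$, which is the case for all but $O(1)$ vertices $w$ per $v$; hence the variance is $O(1/\lvert V_i\rvert)$. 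Only with this in hand is every subsequential limit $\rho$ supported on $\MG=\lbrace\nu\rbrace$, giving $\rho=\delta_\nu$ and (ii). Note also that the delicate point is not ``uniformity in $\Lambda$'' as you suggest --- countably many $\Lambda$ and a countable dense family of $f$ suffice, and a countable intersection of $\rho$-conull sets is $\rho$-conull --- but precisely this passage from matching conditional laws to concentration of the empirical measure.
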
 

\begin{lem}
For any $\Lambda \Subset G$ there is such an $S \Subset G$ that if $v \in V_i$ is an $S$-good element then $\theta_v(\eta_i)$ is $(\Gi,\Lambda)$-admissible.
\end{lem}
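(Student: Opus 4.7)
The plan is to choose $S$ so large that every $S$-good vertex $v \in V_i$ is the centre of a ``copy of $(G,\cdot)$'' big enough to accommodate all potential terms contributing to $\pi_{\Gi,\Lambda}$; the Gibbs equation $\bp_{\Gi^i,\sigma_i^\Lambda(v)}(\eta_i)=\eta_i$ will then push forward through $\theta_v$ to yield the desired $(\Gi,\Lambda)$-admissibility of $\theta_v(\eta_i)$.

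First, let $D\Subset G$ carry $\varphi$ and set $M := \Lambda\cup \partial\Lambda\cup D\cdot D^{-1}\Lambda$; this is the finite window on which $\pi_{\Gi,\Lambda}$ both lives and depends. Choose $S \Subset G$ to contain $\{e\}\cup\Lambda\cup D\cup D^{-1}\cup D^{-1}\Lambda\cup M$. For an $S$-good $v$, properties (1)--(4) of $S$-goodness give: (a) $g\mapsto\sigma_i^g(v)$ is injective on $S$; (b) $\sigma_i^{g_1}\sigma_i^{g_2}(w)=\sigma_i^{g_1g_2}(w)$ for $g_1,g_2\in S$, $w\in\sigma_i^S(v)$; and (c) any $u\in V_i$ with $\sigma_i^g(u)\in\sigma_i^S(v)$ for some $g\in S$ equals $\sigma_i^{g^{-1}}(\sigma_i^g(u))$. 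These conditions suffice to set up a term-by-term correspondence between the potentials around $\sigma_i^\Lambda(v)$ in $\Gi^i$ and those around $\Lambda$ in $\Gi$.

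Indeed, using (a)--(c) one checks that every $T\subset V_i$ with $T\cap\sigma_i^\Lambda(v)\neq\varnothing$ and $\Phi^{\Gi^i}_T\not\equiv 0$ is of the form $T=\sigma_i^{Dg}(v)$ for a unique $g\in D^{-1}\Lambda$, and its unique seed is $u=\sigma_i^g(v)$. Combining this with the identity $\theta_{\sigma_i^g(v)}(\tau')(h)=\tau'(\sigma_i^{hg}(v))=(g\cdot\theta_v(\tau'))(h)$ for $h\in D$ (immediate from (b)) yields
\[
\sum_{T\cap\sigma_i^\Lambda(v)\neq\varnothing}\Phi^{\Gi^i}_T(\tau')\;=\;\sum_{g\in D^{-1}\Lambda}\varphi\bigl(g\cdot\theta_v(\tau')\bigr)\;=\;\sum_{Dg\cap\Lambda\neq\varnothing}\varphi\bigl(g\cdot\theta_v(\tau')\bigr),
\]
which is precisely the Gibbs weight in $\pi_{\Gi,\Lambda}(\theta_v(\tau'))$. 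Injectivity from (a) matches the supports: if $\tau'$ agrees with $\tau$ on $V_i\setminus\sigma_i^\Lambda(v)$, then $\theta_v(\tau')$ agrees with $\theta_v(\tau)$ on $M\setminus\Lambda$. Hence $\theta_v$ intertwines $\pi_{\Gi^i,\sigma_i^\Lambda(v)}(\tau)$ with $\pi_{\Gi,\Lambda}(\theta_v(\tau))$ at the level of $M$-cylinders.

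Pushing $\bp_{\Gi^i,\sigma_i^\Lambda(v)}(\eta_i)=\eta_i$ forward through $\theta_v$ therefore makes $\pr_M(\theta_v(\eta_i))$ a $(\Gi,\Lambda)$-admissible measure on $A^M$; since $\bp_{\Gi,\Lambda}$ acts as the identity on coordinates outside $\Lambda\subset M$ and reads its input only through the $M$-window, admissibility on the $M$-marginal upgrades to $\bp_{\Gi,\Lambda}(\theta_v(\eta_i))=\theta_v(\eta_i)$ on all of $A^G$. The main obstacle is precisely the combinatorial bookkeeping above: the partial-groupoid nature of the $S$-good condition delivers the algebraic identity $\sigma_i^h\sigma_i^g=\sigma_i^{hg}$ (and its inverse-analogues) only after one has checked that every group element arising in the chain lies in $S$, which is why $S$ must anticipate all of $D\cdot D^{-1}\Lambda$ and $D^{-1}$ encountered during the matching.
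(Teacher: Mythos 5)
Your combinatorial core is correct and is essentially the content behind the paper's one-line proof (the paper simply takes $S=\partial_{\Gi}\Lambda\cup\Lambda\cup D\cup D^{-1}$ and asserts the claim): the $S$-good conditions let you identify the seeds $u$ contributing to $\sum_{T\cap\sigma_i^{\Lambda}(v)\neq\varnothing}\Phi^{\Gi^i}_T$ with the elements $g\in D^{-1}\Lambda$ via $u=\sigma_i^{g}(v)$, the identity $\theta_{\sigma_i^g(v)}(\tau')\vert_D=(g\theta_v(\tau'))\vert_D$ matches the weights, and injectivity of $g\mapsto\sigma_i^g(v)$ on $S$ matches the supports inside the window $M$. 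This correctly yields that $\pr_M(\theta_v(\eta_i))$ is $(\Gi,\Lambda)$-admissible, which is all that the paper actually uses in the next lemma (where admissibility is fed into Lemma \ref{approximation of gibbs measure}, a statement about measures on finite windows).

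The last step, however --- ``admissibility on the $M$-marginal upgrades to $\bp_{\Gi,\Lambda}(\theta_v(\eta_i))=\theta_v(\eta_i)$ on all of $A^G$'' --- is not valid, and the global statement is in fact false in general. By Lemma \ref{special kernel decomp}, $\bp_{\Gi,\Lambda}(\mu)=\mu$ on $\Me(A^G)$ is equivalent to the conditional law of the $\Lambda$-coordinates given \emph{all} of $\sB(G\setminus\Lambda)$ being $\pi_{\Gi,\Lambda}$, which has full support on $A^{\Lambda}$. But $g\mapsto\sigma_i^g(v)$ is injective only on $S$; since $V_i$ is finite there are generically elements $g\in G\setminus S$ with $\sigma_i^g(v)=\sigma_i^{\lambda}(v)$ for some $\lambda\in\Lambda$ (e.g.\ $G=\Z$ approximated by $\Z/n_i\Z$, where $\sigma_i^{n_i}(v)=v$), and then $\theta_v(\tau)(g)=\theta_v(\tau)(\lambda)$ holds deterministically, so conditioning on $\sB(G\setminus\Lambda)$ pins down the coordinate at $\lambda$ and the conditional cannot be the full-support kernel $\pi_{\Gi,\Lambda}$. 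The point is not that the kernel ``reads only the $M$-window'' --- that controls the resampling weights, not the conditioning --- but that invariance is a statement about the full complement of $\Lambda$. The remedy is not to upgrade but to state the lemma (as its downstream application requires) for $\pr_W(\theta_v(\eta_i))$ with a finite window $W$ satisfying $\Lambda\cup\partial_{\Gi}\Lambda\subset W\subset S$, which is precisely the version you proved.
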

\begin{proof}
It is not hard to see that for $S = \partial_{Gi}\Lambda \cup \Lambda \cup D \cup D^{-1}$ the statement will hold. 
\end{proof}

\begin{lem}
Let $\nu$ be a unique Gibbs measure for $\Gi$. 
Let $K$ be a finite subset of $G$. Let us fix any metric for the weak* topology on $\Me(A^K)$. 
For any $\varepsilon>0$ we can find such a finite subset $S$ of $G$ that $K \cup \partial_{\Gi}K \subset S$ and that if $v \in V_i$ is $S$-good point then $\pr_{K}(\theta_v(\eta_i))$ is $\varepsilon$-close to $\pr_{K}(\nu)$.
\end{lem}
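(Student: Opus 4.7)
The idea is to combine Lemma \ref{approximation of gibbs measure} with the preceding lemma, using uniqueness of $\nu$ to upgrade the approximate Gibbs-like behaviour of $\theta_v(\eta_i)$ near $S$-good vertices into genuine closeness of the $K$-marginal.

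First, I would apply Lemma \ref{approximation of gibbs measure} with the role of ``$S$'' there played by our $K$. This yields finite subsets $W,\Lambda \Subset G$ with $\Lambda \cup \partial_{\Gi}\Lambda \subset W$ such that every $(\Gi,\Lambda)$-admissible measure $\mu$ on $\prod_{v \in W}A_v$ satisfies that $\pr_K(\mu)$ is $\varepsilon$-close to $\pr_K(\nu)$. Next, I would invoke the preceding lemma applied to this $\Lambda$, obtaining a finite subset $S' \Subset G$ (following its proof one can take $S' = \Lambda \cup \partial_{\Gi}\Lambda \cup D \cup D^{-1}$) such that for every $i$ and every $S'$-good vertex $v \in V_i$ the pushforward $\theta_v(\eta_i)$ on $A^G$ is $(\Gi,\Lambda)$-admissible.

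Then I would set $S = S' \cup K \cup \partial_{\Gi} K$. Enlarging the ``good'' set only strengthens the hypothesis, so any $S$-good $v$ remains $S'$-good and the admissibility of $\theta_v(\eta_i)$ persists; the required containment $K \cup \partial_{\Gi} K \subset S$ is built in by construction. Moreover, since the kernel $\bp_{\Gi,\Lambda}$ modifies only coordinates in $\Lambda$ and its transition law depends only on the values on $\partial_{\Gi}\Lambda \subset W$, the identity $\bp_{\Gi,\Lambda}(\theta_v(\eta_i)) = \theta_v(\eta_i)$ on $A^G$ descends to $\bp_{\Gi,\Lambda}(\pr_W(\theta_v(\eta_i))) = \pr_W(\theta_v(\eta_i))$ on $\prod_{w \in W}A_w$. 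Thus $\pr_W(\theta_v(\eta_i))$ is $(\Gi,\Lambda)$-admissible in exactly the sense needed to feed it into Lemma \ref{approximation of gibbs measure}, which then delivers the required closeness of $\pr_K(\theta_v(\eta_i))$ to $\pr_K(\nu)$.

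The conceptual work has already been done in Lemma \ref{approximation of gibbs measure}, which is a compactness-plus-uniqueness argument, and in the preceding lemma, which is a short combinatorial verification that $S'$-goodness transports the local defining equations of $\Gi^i$ to those of $\Gi$ near $v$. The present statement is pure assembly, and the only delicate point is book-keeping: coordinating the three finite sets $K$, $\Lambda$, $W$ (together with $D$ and the relevant boundaries) so that the admissibility condition produced by the preceding lemma coincides with the hypothesis consumed by Lemma \ref{approximation of gibbs measure}, and making sure the passage between admissibility on $A^G$ and admissibility on $\prod_{w \in W}A_w$ is legitimate.
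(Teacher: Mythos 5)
Your proposal is correct and follows essentially the same route as the paper: the paper's own proof likewise first extracts $\Lambda$ (and $W$) from Lemma \ref{approximation of gibbs measure} applied to $K$, then uses the preceding lemma to find $S$ making $\theta_v(\eta_i)$ $(\Gi,\Lambda)$-admissible for $S$-good $v$. Your extra remarks on enlarging $S$ to contain $K \cup \partial_{\Gi}K$ and on descending admissibility from $A^G$ to the $W$-marginal are just the book-keeping the paper leaves implicit.
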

\begin{proof}
By the lemma \ref{approximation of gibbs measure} we can obtain such a finite subset $\Lambda \Subset V$, that for every $W \subset V$ with $\Lambda \cup \partial_{\Gi} \Lambda \subset W$ and for every $(\Lambda,\Gi)$-admissible measure $\mu$ on $A^W$ we have that $\pr_{K}(\mu)$ would be $\varepsilon$-close to $\pr_K(\nu)$. Let us now use the previous lemma in order to get such a subset $S$ that for every $S$-good point $v \in V_i$ measure $\theta_v(\eta_i)$ would be $(\Lambda, \Gi)$-admissibsle.
\end{proof}

\begin{lem}
For any $F \Subset G$ there is such an $S \Subset G$ that if $v \in V_i$ is an $S$-good element, then $\partial_{\Gi^i}(\sigma_i^{F}(v)) = \sigma_i^{\partial_{\Gi}F}(v)$.
\end{lem}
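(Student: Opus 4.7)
The plan is to enlarge $S$ so that every group-theoretic manipulation used below takes place inside $S$; concretely I would require $S$ to contain $\{e\}$, $F$, $\partial_{\Gi}F$, $D$, $D^{-1}$, and every element of the form $(d')^{-1}f$ with $d' \in D$, $f \in F$ (all finite unions). Then for an $S$-good $v$ the four conditions of $S$-goodness guarantee that $\sigma_i$, restricted to elements reachable from $v$ via $S$, mimics a genuine $G$-action: $\sigma_i^{\cdot}$ is injective on $S$ (condition $1$), multiplicative on the relevant products (condition $2$), and involutive under inverses (conditions $3$ and $4$). With this $S$ fixed, both sides of the desired equality unfold to the same explicit subset of $V_i$.

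For the inclusion $\sigma_i^{\partial_{\Gi}F}(v) \subseteq \partial_{\Gi^i}(\sigma_i^F(v))$, pick $h \in \partial_{\Gi}F$ and write $h = dg$, $d'g = f$ for suitable $d,d' \in D$, $f \in F$, so that $g = (d')^{-1}f \in S$. Set $w = \sigma_i^g(v)$ and $T = \sigma_i^D(w)$. Condition $(2)$ yields $\sigma_i^{d'}(w) = \sigma_i^f(v) \in \sigma_i^F(v)$, hence $T \cap \sigma_i^F(v) \neq \varnothing$, and $\sigma_i^d(w) = \sigma_i^h(v)$; condition $(1)$ together with $h \notin F$ keeps $\sigma_i^h(v)$ out of $\sigma_i^F(v)$; and because $\sigma_i^D$ is injective at $w$ the sum defining $\Phi_T$ for $\Gi^i$ collapses to the single term $\varphi \circ \theta_w$, so $\Phi_T \not\equiv 0$. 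Therefore $\sigma_i^h(v) \in T \setminus \sigma_i^F(v) \subseteq \partial_{\Gi^i}(\sigma_i^F(v))$.

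For the reverse inclusion, any $y$ in the left-hand side lies in $T \setminus \sigma_i^F(v)$ for some $T = \sigma_i^D(u)$ meeting $\sigma_i^F(v)$, witnessed by $\sigma_i^{d'}(u) = \sigma_i^f(v)$ with $d' \in D$, $f \in F$. Applying condition $(4)$ of $S$-goodness to the point $\sigma_i^f(v) \in \sigma_i^S(v)$ and the element $d' \in S$ forces $u = \sigma_i^{(d')^{-1}}(\sigma_i^f(v))$, which by condition $(2)$ collapses to $u = \sigma_i^g(v)$ for $g := (d')^{-1}f \in S$. Consequently $T = \sigma_i^{Dg}(v)$ and $Dg \cap F \ni f$, so $Dg \setminus F \subseteq \partial_{\Gi}F$ and $y \in \sigma_i^{Dg \setminus F}(v) \subseteq \sigma_i^{\partial_{\Gi}F}(v)$. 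The main obstacle is not conceptual but combinatorial bookkeeping: one must list the finitely many products and inverses invoked and check they all sit inside a single finite $S$. The one semi-subtle point is guaranteeing that the potential terms contributing to the boundary are genuinely nonzero, but this is built in by the injectivity of $\sigma_i^D$ at good points, which reduces each relevant $\Phi_T$ to a single nontrivial copy of $\varphi$.
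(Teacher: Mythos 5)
Your proof is essentially the paper's, with the bookkeeping actually carried out: the paper's entire proof is ``Take $S= F \cup \partial_{\Gi}F \cup D \cup D^{-1}$'', and your enlargement of $S$ (adding $e$ and $D^{-1}F$) is a sensible and in fact necessary refinement, since without $e\in S$ and $g=(d')^{-1}f\in S$ the conditions of $S$-goodness do not literally apply to the compositions you (and the paper, implicitly) need. Both inclusions are argued correctly. The one imprecise step is your justification that $\Phi_T\not\equiv 0$ in $\Gi^i$: injectivity of $d\mapsto\sigma_i^d(w)$ only gives $\lvert T\rvert=\lvert D\rvert$; it does not by itself exclude a second point $u'\neq w$ with $\sigma_i^D(u')=T$, whose term could in principle cancel against $\varphi\circ\theta_w$ in the defining sum. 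To close this, note that (after a further harmless enlargement of $S$, using that $Dg\subset F\cup\partial_{\Gi}F$ for the relevant $g$) conditions (2) and (4) force any such $u'$ to equal $\sigma_i^{g'}(v)$ with $Dg'=Dg$, which outside the degenerate situation $Dg'=Dg$, $g'\neq g$ gives $u'=w$ and a single nonzero summand; the paper glosses over exactly the same point, remarking only that ``in most instances this sum will contain either one or zero summands''.
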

\begin{proof}
Take $S= F \cup \partial_{\Gi}F \cup D \cup D^{-1}$
\end{proof}
\begin{lem}
Let $F \Subset G$. Let us fix any metric for the weak-$\star$ topology on $\Me(A^{F \cup \partial_{\Gi}F})$. Let us fix $\varepsilon>0$. Then there is such a finite subset $S$ of $G$ that $S \subset F \cup \partial_{\Gi}F \cup D \cup D^{-1}$ and that for every $S$-good point $v \in V_i$ we have $\partial_{\Gi^i}(\sigma_i^{F}(v)) = \sigma_i^{\partial_{\Gi}F}(v)$ and $\pr_{F \cup \partial_{\Gi}F}(\theta(\eta_i))$ is $\varepsilon$-close to $\pr_{F \cup \partial_{\Gi}F}(\nu)$. 
\end{lem}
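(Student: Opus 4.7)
The plan is to combine the two immediately preceding lemmas, which already isolate the two conclusions that the present lemma asserts simultaneously.

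First, I would apply the penultimate lemma with $K := F \cup \partial_{\Gi} F$. This produces a finite set $S_1 \Subset G$ with $K \cup \partial_{\Gi} K \subset S_1$ such that for every $S_1$-good vertex $v \in V_i$, the projection $\pr_{F \cup \partial_{\Gi} F}(\theta_v(\eta_i))$ is $\varepsilon$-close to $\pr_{F \cup \partial_{\Gi} F}(\nu)$ in the fixed metric. Second, I would apply the lemma immediately above (with the same $F$) to obtain the set $S_2 := F \cup \partial_{\Gi} F \cup D \cup D^{-1}$ with the property that every $S_2$-good $v \in V_i$ satisfies $\partial_{\Gi^i}(\sigma_i^{F}(v)) = \sigma_i^{\partial_{\Gi} F}(v)$.

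I would then set $S := S_1 \cup S_2$, which is a finite subset of $G$ containing $F \cup \partial_{\Gi} F \cup D \cup D^{-1}$ (I read the containment in the statement as going in this direction; the reverse containment is clearly not what is intended since enlarging $S$ only strengthens the $S$-goodness hypothesis). The single technical observation I need is the monotonicity of the notion of ``$S$-good'' in $S$: since each of the four defining clauses is a condition universally quantified over elements of $S$, any point that is $S$-good is automatically $S'$-good for every $S' \subset S$. Consequently every $S$-good $v \in V_i$ is simultaneously $S_1$-good and $S_2$-good, and both desired conclusions hold.

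There is no real obstacle in this argument; it is purely the bookkeeping step of intersecting the hypotheses of the previous two lemmas. The only mildly substantive point is the monotonicity remark above, which is immediate from the definition of goodness as a finite conjunction of conditions indexed by $S$. The lemma is stated as a stand-alone statement here because it is precisely the form in which it will be invoked later (where one wants to control both the local structure of $\Gi^i$ around $v$ and the approximation of $\nu$ by pushforwards of $\eta_i$ on the window $F \cup \partial_{\Gi} F$).
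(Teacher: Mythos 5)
Your proposal is correct and is essentially the paper's own proof: the paper likewise observes that the two preceding lemmas remain valid after enlarging their respective sets $S$ and takes the union. Your explicit remark that $S$-goodness is monotone (every $S$-good point is $S'$-good for $S' \subset S$, since each defining clause is universally quantified over $S$) and your reading of the containment as $S \supset F \cup \partial_{\Gi}F \cup D \cup D^{-1}$ are both the intended ones.
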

\begin{proof}
It is not hard to see that two previous lemma will continue to hold after enlarging of correspondent subsets $S$. So we take $S$ for this lemma to be their union.
\end{proof}

The proof of the following lemma uses the random ordering techique similar to that from \cite{BCKL13}.

\begin{lem}
Suppose $\nu$ be unique Gibbs measure for $\Gi$. Then for any finite set $F \subset G$ we have $h(\nu) \geq \expect_{\xi}H(\alpha\vert \alpha^{L_{\xi}\cup (G \setminus F)})$. 
\end{lem}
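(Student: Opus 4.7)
The plan is to apply the random-ordering chain rule of \cite{BCKL13} to the finite Gibbs measure $\eta_i$, lower-bound each resulting conditional entropy by conditioning on more coordinates and simplifying via the Markov property of $\eta_i$, then match the outcome to $\mathbb{E}_\xi H_\nu(\alpha \vert \alpha^{L_\xi \cup (G \setminus F)})$ through the local-statistics approximation of the preceding lemmas. Without loss of generality assume $e \in F$; otherwise $\alpha$ is $\alpha^{G \setminus F}$-measurable and the right-hand side vanishes.

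Concretely, let $(\zeta_v)_{v \in V_i}$ be i.i.d. uniform $[0,1]$ random variables, set $U_v(\zeta) := \{w \in V_i : \zeta_w < \zeta_v\}$, and let $\alpha_v$ denote the coordinate partition of $A^{V_i}$ at $v$, with $\alpha_W := \bigvee_{w \in W}\alpha_w$ for $W \subset V_i$. For each fixed $\zeta$ the chain rule gives $H(\eta_i) = \sum_v H_{\eta_i}(\alpha_v \vert \alpha_{U_v(\zeta)})$, and averaging,
\[
H(\eta_i) = \sum_{v \in V_i} \mathbb{E}_\zeta\, H_{\eta_i}(\alpha_v \vert \alpha_{U_v(\zeta)}).
\]
Fix $\varepsilon>0$ and take $S \Subset G$ large enough that, by the two preceding lemmas, every $S$-good vertex $v$ satisfies: the map $g \mapsto \sigma_i^g(v)$ is injective on $F \cup \partial_\Gi F$, the identification $\partial_{\Gi^i}(\sigma_i^F(v)) = \sigma_i^{\partial_\Gi F}(v)$ holds, and $\pr_{F \cup \partial_\Gi F}(\theta_v(\eta_i))$ is weak-$\star$ $\varepsilon$-close to $\pr_{F \cup \partial_\Gi F}(\nu)$. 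Monotonicity of entropy in the conditioning together with the Markov property of $\eta_i$ applied to the region $\sigma_i^F(v)$ then give, for each $\zeta$ and each $S$-good $v$,
\[
H_{\eta_i}(\alpha_v \vert \alpha_{U_v(\zeta)}) \;\geq\; H_{\eta_i}(\alpha_v \vert \alpha_{U_v(\zeta) \cap \sigma_i^F(v)} \vee \alpha_{\sigma_i^{\partial_\Gi F}(v)}).
\]

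Under the bijection $g \leftrightarrow \sigma_i^g(v)$, the i.i.d. values $(\zeta_{\sigma_i^g(v)})_{g \in F \cup \partial_\Gi F}$ pull back to an i.i.d. uniform process on $F \cup \partial_\Gi F$, so the random subset $U_v(\zeta) \cap \sigma_i^F(v)$ has the same law as $L_\xi \cap F$. Since the above conditional entropy is a continuous functional of the joint law $\pr_{F \cup \partial_\Gi F}(\theta_v(\eta_i))$ (finite alphabet), the weak-$\star$ approximation, combined with the Markov identity for $\nu$
\[
H_\nu(\alpha \vert \alpha^{L_\xi \cap F} \vee \alpha^{\partial_\Gi F}) = H_\nu(\alpha \vert \alpha^{L_\xi \cup (G \setminus F)})
\]
(valid because $\partial_\Gi F \subset G \setminus F$ and the excess outside coordinates are $\nu$-conditionally independent of the interior coordinates given the boundary), yields, for every $S$-good $v$ and all sufficiently large $i$,
\[
\mathbb{E}_\zeta H_{\eta_i}(\alpha_v \vert \alpha_{U_v(\zeta) \cap \sigma_i^F(v)} \vee \alpha_{\sigma_i^{\partial_\Gi F}(v)}) \;\geq\; \mathbb{E}_\xi H_\nu(\alpha \vert \alpha^{L_\xi \cup (G \setminus F)}) - \varepsilon.
\]

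Dropping the nonnegative contributions from non-$S$-good $v$ and dividing by $|V_i|$, lemma \ref{sofic is good} (the fraction of $S$-good vertices tends to $1$) together with the theorem identifying $h(\nu) = \limsup_i H(\eta_i)/|V_i|$ gives $h(\nu) \geq \mathbb{E}_\xi H_\nu(\alpha \vert \alpha^{L_\xi \cup (G \setminus F)}) - \varepsilon$, and letting $\varepsilon \to 0$ completes the argument. The main difficulty I expect is the simultaneous bookkeeping in the choice of $S$---it must enforce injectivity of $g \mapsto \sigma_i^g(v)$, the boundary identification $\partial_{\Gi^i}(\sigma_i^F(v)) = \sigma_i^{\partial_\Gi F}(v)$, and uniform weak-$\star$ closeness of the local laws all at once---together with verifying that the continuity estimate on conditional entropy is uniform enough in $v$ to survive the averaging over the random ordering $\zeta$.
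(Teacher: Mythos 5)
Your proposal is correct and follows essentially the same route as the paper: the random-ordering chain rule for $H(\eta_i)$, lower-bounding each term by extra conditioning, the Markov property on $\sigma_i^F(v)$ reducing to $(L_{\xi}\cap F)\cup\partial_{\Gi}F$, transfer to $\nu$ via weak-$\star$ closeness of local marginals at $S$-good vertices, a second application of the Markov property to pass to $\alpha^{L_{\xi}\cup(G\setminus F)}$, and the density of $S$-good points combined with $h(\nu)=\limsup_i H(\eta_i)/\lvert V_i\rvert$. The only cosmetic difference is that the paper writes the continuity step as a bound on differences of entropies of finite marginals rather than invoking continuity of conditional entropy directly.
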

\begin{proof}
Let $(\chi_{v})_{v \in V_i}$ be a collection of i.i.d. random variables each of which has the uniform distribution on the unit interval. Let $L_{v,\chi}$ for $v \in V_i$ be the set of all such $w \in V_i$ that $\chi_{w}<\chi_{v}$. Let $(Y_{v})_{v \in V_i}$ be a random process with the distribution $\eta_i$. Let $(X_g)_{g \in G}$ be a random process with the distribution $\nu$.
It is easy to see that by the chain rule expansion
\[
H(\eta_i)=\sum_{v \in V_i}\expect_{\chi}H \left( Y_v \vert (Y_w)_{w \in L_{v,\chi}} \right).
\]
Let us fix any metric for the weak* topology on $\Me(A^{F \cup \partial_{\Gi}F})$ and let $\varepsilon>0$ (we will choose it later). We apply previous lemma to get suitable $S$. Let $V_i' \subset V_i$, $i \in \N$ be sets of all the $S$-good points. We know that $$\lim_{i \to \infty} \lvert V_i'\rvert / \lvert V_i \rvert = 1.$$ Consider now the term in the sum above, corresponding to some element $v \in V_i'$:
\begin{multline*}
\expect_{\chi}H \left( Y_v \vert (Y_w)_{w \in L_{v,\chi}} \right) \geq \expect_{\chi}H \left( Y_v \vert (Y_w)_{w \in L_{v,\chi} \cup (V_i \setminus \sigma_i^{F}(v))} \right) =\\= \expect_{\chi}H \left( Y_v \vert (Y_w)_{w \in (L_{v,\chi} \cap \sigma_i^F(v) )\cup (\partial_{\Gi^i} \sigma_i^{F}(v))} \right) =\\= \expect_{\chi} \left( H((X_w)_{w \in \lbrace v \rbrace \cup (L_{v,\chi} \cap \sigma_i^F(v)) \cup (\partial_{\Gi^i} \sigma_i^{F}(v))}) - H((X_w)_{w \in (L_{v,\chi} \cap \sigma_i^F(v)) \cup (\partial_{\Gi^i} \sigma_i^{F}(v))}) \right) \geq \\ \geq \expect_{\xi}\left( H\left((X_g)_{g \in \lbrace e\rbrace \cup (L_{\xi} \cap F) \cup \partial_{\Gi}F } \right) - H\left((X_g)_{g \in (L_{\xi} \cap F) \cup \partial_{\Gi}F } \right)\right) - \varepsilon'.
\end{multline*}
In estimates above we used Markov property and then the fact for any ${\varepsilon'>0}$ we can choose such a small $\varepsilon>0$ that distribution of $(Y_w)_{w \in \sigma_i^{F \cup \partial_{\Gi}F}}$ will be close enough to that of $(X_g)_{g \in F \cup \partial_{\Gi}F} $(with relabeling by $g \mapsto \sigma_i^{g}(v)$) that for any $R \subset F \cup \partial_{\Gi}F$ we will have
\[
\left\lvert H((X_g)_{g \in R}) - H((Y_w)_{w \in \sigma_i^{R}(v)})\right\rvert < \varepsilon'/2.
\]
Then we again use the Markov property:
\begin{multline*}
\expect_{\xi}\left( H\left((X_g)_{g \in \lbrace e\rbrace \cup (L_{\xi} \cap F) \cup \partial_{\Gi}F } \right) - H\left((X_g)_{g \in (L_{\xi} \cap F) \cup \partial_{\Gi}F } \right)\right) =\\= \expect_{\xi}\left( H\left((X_g)_{g \in \lbrace e\rbrace \cup L_{\xi}  \cup (G \setminus F) } \right) - H\left((X_g)_{g \in L_{\xi} \cup (G \setminus F) } \right)\right)=\\=\expect_{\xi}H\left(X_e \vert (X_g)_{g \in L_{\xi} \cup (G \setminus F)}\right) = \expect_{\xi}H(\alpha\vert \alpha^{L_{\xi}\cup (G \setminus F)})
\end{multline*}
The estimate $\lim_{i \to \infty} \lvert V_i'\rvert / \lvert V_i\rvert=1$ (lemma \ref{sofic is good}) implies now that
\[
h(\nu) \geq \limsup_{i \to \infty} \frac{H(\eta_i)}{\lvert V_i \rvert} \geq \expect_{\xi}H(\alpha\vert \alpha^{L_{\xi}\cup (G \setminus F)}) - \varepsilon'.
\]
This finishes the proof, since $\varepsilon'$ could be taken arbitrarily small.
\end{proof}

\begin{proof}[Proof of theorem \ref{main ineq}]
Take $F_n$ to be a sequence of co-finite subsets of $G$ such that $F_i \subset F_j$ for $i>j$ and $\bigcap_{i \in \N}F_i = \varnothing$.
For every $\xi$ we have $$H(\alpha \vert \alpha^{L_{\xi}\cup F_n}) \to H(\alpha \vert \widetilde{\alpha^{L_{\xi}}}).$$ So the theorem holds by the previous lemma and the dominated convergence theorem.
\end{proof}

\end{document}